\newtheorem{theorem}{Theorem}[section]
\newtheorem{itheorem}{Theorem}
\newtheorem{proposition}[theorem]{Proposition}
\newtheorem{lemma}[theorem]{Lemma}
\newtheorem{corollary}[theorem]{Corollary}
\newtheorem{definition}[theorem]{Definition}
\numberwithin{equation}{section}
\theoremstyle{remark}
\DeclareMathOperator{\End}{End}
\DeclareMathOperator{\GL}{GL}
\DeclareMathOperator{\id}{id}
\DeclareMathOperator{\supp}{supp}
\DeclareMathOperator{\Tr}{Tr}
\DeclareMathOperator{\ptr}{ptr}
\def\Thoma{\mathbb{T}}
\def\ThomaYB{\mathbb{T}_{\rm YB}}
\def\Young{\mathbb{Y}}
\def\Rquot{\R_0/\mathord{\sim}}
\def\TL{{\rm TL}}
\newcommand{\Cl}{\mathbb{C}}
\newcommand{\Rl}{\mathbb{R}}
\newcommand{\Nl}{\mathbb{N}}
\newcommand{\Zl}{\mathbb{Z}}
\newcommand{\Om}{\Omega}
\newcommand{\om}{\omega}
\newcommand{\la}{\lambda}
\newcommand{\eps}{\varepsilon}
\newcommand{\R}{\mathcal{R}}
\newcommand{\B}{\mathcal{B}}
\newcommand{\T}{\mathcal{T}}
\newcommand{\M}{\mathcal{M}}
\newcommand{\N}{\mathcal{N}}
\newcommand{\Hil}{\mathcal{H}}
\newcommand{\E}{\mathcal{E}}
\newcommand{\Rti}{\tilde{R}}
\newcommand{\dti}{\tilde{d}}
\newcommand{\Vti}{\tilde{V}}
\newcommand{\Rhat}{\hat{R}}
\newcommand{\ot}{\otimes}
\newcommand{\tp}[1]{^{\otimes #1}}    
\newcommand{\bigboxplus}{
  \mathop{
    \vphantom{\bigoplus} 
    \mathchoice
      {\vcenter{\hbox{\resizebox{\widthof{$\displaystyle\bigoplus$}}{!}{$\boxplus$}}}}
      {\vcenter{\hbox{\resizebox{\widthof{$\bigoplus$}}{!}{$\boxplus$}}}}
      {\vcenter{\hbox{\resizebox{\widthof{$\scriptstyle\oplus$}}{!}{$\boxplus$}}}}
      {\vcenter{\hbox{\resizebox{\widthof{$\scriptscriptstyle\oplus$}}{!}{$\boxplus$}}}}
  }\displaylimits 
}
\newcommand{\symm}{\Lambda}                                       
\newcommand{\qsymm}{\widehat{\symm}}                              
\newcommand{\poly}[1]{\mathsf{#1}}
\newcommand{\elsym}[1]{\poly{e}_{#1}}                            
\newcommand{\felsym}[2]{\poly{e}_{#1} \left(#2 \right)}
\newcommand{\comsym}[1]{\poly{h}_{#1}}                           
\newcommand{\fcomsym}[2]{\poly{h}_{#1} \left(#2 \right)}
\newcommand{\powsum}[1]{\poly{p}_{#1}}                           
\newcommand{\fpowsum}[2]{\poly{p}_{#1} \left(#2 \right)}
\newcommand{\schur}[1]{\poly{s}_{#1}}                           
\newcommand{\fschur}[2]{\poly{s}_{#1} \left(#2 \right)}
\newcommand{\partn}[1]{\left[#1\right]}
\newcommand{\deq}{\coloneqq}  %
\title{Yang-Baxter representations of the\\ infinite symmetric group}
\author{Gandalf Lechner, Ulrich Pennig, Simon Wood\\\\
\small School of Mathematics, Cardiff University}
\date{March 22, 2018}
\begin{document}

\maketitle

\begin{abstract}
	Every unitary involutive solution of the quantum Yang-Baxter equation (R-matrix) defines an extremal character and a representation of the infinite symmetric group $S_\infty$. We give a complete classification of all such Yang-Baxter characters and determine which extremal characters of $S_\infty$ are of Yang-Baxter form. 
	
	Calling two involutive R-matrices equivalent if they have the same character and the same dimension, we show that equivalence classes can be parameterized by pairs of Young diagrams, and construct an explicit normal form R-matrix for each class. Using operator-algebraic techniques (subfactors), we prove that two R-matrices are equivalent if and only if they have similar partial traces.
	
	Furthermore, we describe the algebraic structure of the set of equivalence classes of all involutive R-matrices, and discuss several families of examples. These include unitary Yang-Baxter representations of the Temperley-Lieb algebra at loop parameter $\delta=2$, which can be completely classified in terms of their trace and dimension.
	
	\medskip
	\noindent{\bf Mathematics Subject Classification:} 16T25, 20C32, 46L37
\end{abstract}

\section{Introduction}

The Yang-Baxter equation is an algebraic equation that plays a striking role in a remarkable number of seemingly disparate fields: Statistical mechanics \cite{Baxter:1972}, quantum mechanics \cite{Yang:1967}, braid groups, knot theory \cite{Jones:1987,Turaev:1988}, integrable quantum field theory \cite{AbdallaAbdallaRothe:2001}, quasitriangular Hopf algebras \cite{ChariPressley:1994}, and von Neumann algebras  \cite{Jones:1983_2}, to name but a few.

In its most basic form, the (quantum) Yang-Baxter equation is an equation for an endomorphism $R\in\End(V\ot V)$ on the tensor square of a vector space~$V$, namely
\begin{align}\label{eq:YBE}
 	(R\otimes \id_V)(\id_V\otimes R)(R\otimes \id_V)
 	=
 	(\id_V\otimes R)(R\otimes \id_V)(\id_V\otimes R)
\end{align}
as an equation in $\End(V\ot V\ot V)$. Several variations of this equation exist (see, for example, \cite{ChariPressley:1994,Jimbo:1989_2,LuYanZhu:2000}), but we shall only consider the form \eqref{eq:YBE}. 

As a nonlinear system of $(\dim V)^6$ equations for $(\dim V)^4$ unknowns, the Yang-Baxter equation is notoriously hard to solve in general. One rich source of solutions to \eqref{eq:YBE} is the theory of quantum groups, pioneered by Drinfeld \cite{Drinfeld:1986} and Jimbo \cite{Jimbo:1986}, which connects the Yang-Baxter equation to the representation theory of Lie algebras. 
A complete understanding or classification of all solutions of the Yang-Baxter equation has, however, not been reached.

A completely different approach to solving the Yang-Baxter equation has been put forward by Hietarinta. Emphasizing the point that an interesting structure of the set of solutions of the Yang-Baxter equation can only be expected after dividing by an appropriate equivalence relation, he succeeded in finding all solutions of \eqref{eq:YBE} for $\dim V=2$ up to a certain equivalence relation, with the help of computer algebra \cite{Hietarinta:1992_8}. But already for $\dim V=3$, this program was only partially successful \cite{Hietarinta:1993_3}.

\medskip

In this article, we develop a new approach to the Yang-Baxter equation in any dimension by considering a more natural equivalence relation, based on representation theory, on its set of solutions. Up to this equivalence, we give a complete classification of the special class of all unitary involutive R-matrices.

Here and hereafter, we consider \eqref{eq:YBE} over a finite dimensional vector space $V$, and agree to write $d=\dim V$ throughout\footnote{The Yang-Baxter equation  \eqref{eq:YBE} is of course also well defined for infinite dimensional Hilbert spaces~$V$, and in fact many interesting infinite dimensional solutions exist: In particular, a solution of the Yang-Baxter equation {\em with spectral parameter} can be rewritten as one without, but on an infinite dimensional base space (see, for example, \cite[Lemma 2.2]{HollandsLechner:2016}).}. We fix a scalar product on $V$ (and hence its tensor powers), denote the set of all unitary solutions of \eqref{eq:YBE} by $\R(V)$, and write $\R$ for the union of $\R(V)$ over all finite dimensional vector spaces $V$. As usual, the elements of~$\R$ are referred to as R-matrices. Given some $R\in\R(V)$, we refer to $d=\dim V$ as the dimension of $R$ -- although $R$ is an endomorphism of a space of dimension $d^2$ -- and to
$V$ as the base space of $R$.

As is well known, any $R\in\R(V)$ generates (unitary) representations $\rho_R^{(n)}$, $n\in\Nl$, of the braid groups $B_n$ on $V\tp{n}$, by representing the elementary braid\footnote{Recall that a presentation of the braid group $B_n$ on $n$ strands is given by $B_n=\langle b_1,\ldots,b_{n-1}\,:\,b_ib_{i+1}b_i=b_{i+1}b_ib_{i+1},\;b_ib_j=b_jb_i\,\text{ for } |i-j|>1\rangle.$}~$b_k$, $k\in\{1,\ldots,n-1\}$, as
\begin{align}\label{eq:ConstantYBE->Rep}
     \rho_R^{(n)}(b_k)
     \deq
     \id_V\tp{(k-1)}\ot R\ot \id_V\tp{(n-k-1)}
     \in\End V\tp{n}
     \,.
\end{align}
We set $\rho_R^{(1)}=\id_V$. The representations \eqref{eq:ConstantYBE->Rep} are used to define our equivalence relation on $\R$ below. This relation is also suggested by applications in integrable quantum field theory \cite{AlazzawiLechner:2016}. In a particular context, the same relation was considered in \cite{Gurevich:1986}, and a more restrictive version of it was essential in the computation of all solutions of the Yang-Baxter equation for $d=2$~\cite{Hietarinta:1992_8}.

\begin{definition}\label{Definition:R0-Equivalence}
     Two $R$-matrices $R,S\in\R$ are defined to be equivalent, denoted $R\sim S$, if and only if for each $n\in\Nl$, the representations $\rho_R^{(n)}$ and $\rho_S^{(n)}$ are equivalent.
\end{definition}

It is easy to check that two R-matrices are equivalent if and only if they have the same dimension and the same normalized character, introduced in Sect.~\ref{section:characters}.
Also note that $R\sim S$ implies similarity of these endomorphisms, $R\cong S$, because $\rho_R^{(2)}(B_2)$ is generated by $R$.

Simple examples of equivalent R-matrices can be produced as follows: Let $A\in\GL(V)$ be unitary and $F\in\End(V\ot V)$ be the tensor flip, defined by $F(v\ot w)=w\ot v$. Then, for any $R\in\R(V)$,
\begin{align}\label{eq:square-equivalence}
	R\sim (A\ot A)R(A^{-1}\ot A^{-1}),\qquad
	R\sim FRF.
\end{align}
In particular, $(A\ot A)R(A^{-1}\ot A^{-1})$ and $FRF$ are elements of $\R(V)$. But in general, the two transformations \eqref{eq:square-equivalence} do not generate the full equivalence class of $R$.

\medskip

In this paper, we focus on the important special case in which the representations $\rho_R^{(n)}$ factor through the surjective group homomorphism $B_n\to S_n$ onto the symmetric group $S_n$ of $n$ letters. This happens if and only if $R$ is {\em involutive}, $R^2=1$.

Such involutive R-matrices play a prominent role in various fields, ranging from symmetries of categories of vector spaces \cite{Lyubashenko:1987} over scattering operators in integrable quantum field theory (with a spectral parameter) \cite{AbdallaAbdallaRothe:2001} to representations of Thompson's group $\cal V$ \cite{Jones:2016_2} and recent constructions of non-commutative spaces \cite{Dubois-VioletteLandi:2017}. They also form the starting point for investigating $q$-deformed R-matrices with general quadratic minimal polynomial.

We write $\R_0(V)\subset\R(V)$, $\R_0\subset\R$ for the subset of involutive R-matrices.

\medskip

In the involutive case, $\rho_R^{(2)}\cong\rho_S^{(2)}$ is equivalent to $R$ and $S$ having the same dimension and trace because the only possible eigenvalues are $\pm1$. There is an old conjecture by Gurevich to the effect that equivalence classes of involutive R-matrices $R$ are even uniquely characterized by these two numbers, the dimension and the trace of $R$ \cite[p.~760]{Gurevich:1986}. Gurevich gave a proof of his conjecture in the case that one of the eigenvalues of $R$ has multiplicity 1.

However, our findings in this article imply that this conjecture is false in general: The full equivalence $R\sim S$ is a much stronger condition than having the same dimension and trace, which is reflected in the rich structure of $\Rquot$ that we find. We shall prove:

\begin{itheorem}\label{theorem:Rquot=YxY-Introduction}
	$\Rquot$ is in one to one correspondence with pairs of Young diagrams. Classes of R-matrices of dimension~$d$ correspond to pairs of Young diagrams with $d$ boxes in total.	
\end{itheorem}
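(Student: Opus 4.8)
The plan is to realize the bijection through the single invariant $\ptr(R)\in\End(V)$, the partial trace of $R$ over one tensor factor, and to read the pair of Young diagrams off its spectrum. Since the excerpt already reduces $\sim$ to equality of dimension and normalized character, and the character of an involutive $R$ is an extremal character of $S_\infty$, I would first invoke Thoma's classification of the extremal characters of $S_\infty$ to encode each class by a dimension $d$ together with Thoma parameters $\alpha_1\ge\alpha_2\ge\cdots\ge 0$ and $\beta_1\ge\beta_2\ge\cdots\ge 0$ with $\sum_i\alpha_i+\sum_j\beta_j\le 1$, the character value on a $k$-cycle being the power sum $\sum_i\alpha_i^k+(-1)^{k-1}\sum_j\beta_j^k$. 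The theorem is then equivalent to the assertion that the parameters arising from a dimension-$d$ involutive R-matrix are exactly the rational ``flat'' ones $\alpha_i=\la_i/d$ and $\beta_j=\mu_j/d$ for a pair of partitions $(\la,\mu)$ with $|\la|+|\mu|=d$; the pair of Young diagrams in the statement is this $(\la,\mu)$.

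For surjectivity I would exhibit an explicit normal form. Given $(\la,\mu)$ with $|\la|+|\mu|=d$, decompose $V=\Cl^d$ orthogonally into blocks $V_1^+,V_2^+,\dots$ and $V_1^-,V_2^-,\dots$ with $\dim V_i^+=\la_i$ and $\dim V_j^-=\mu_j$, assign each block the sign $+1$ or $-1$ according to its type, and let $R_{\la,\mu}$ act on $V\ot V$ as the scalar (block sign) times the identity on each $B\ot B$ with $B$ a single block, and as the plain tensor flip $F$ on $V_a\ot V_b$ for distinct blocks $a\neq b$. One checks directly that $R_{\la,\mu}$ is unitary and involutive, and that it solves \eqref{eq:YBE}: the only interactions are between the within-block signs and the cross-block flips, and a short case analysis on three tensor legs shows both sides of \eqref{eq:YBE} agree. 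A computation of the partial trace gives $\ptr(R_{\la,\mu})=\bigoplus_i \la_i\,\id_{V_i^+}\oplus\bigoplus_j(-\mu_j)\,\id_{V_j^-}$, so its spectrum consists of the eigenvalue $\la_i$ with multiplicity $\la_i$ and the eigenvalue $-\mu_j$ with multiplicity $\mu_j$; evaluating the cyclic traces then yields the Thoma parameters $\alpha=\la/d$, $\beta=\mu/d$. This realizes every pair of Young diagrams.

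The main work is the converse: that an arbitrary $R\in\R_0(V)$ has a character of exactly this flat rational form, and that $\ptr(R)$ is a complete invariant for $\sim$. Here I would argue operator-algebraically. The generators \eqref{eq:ConstantYBE->Rep} produce an increasing tower of finite-dimensional algebras $\rho_R^{(n)}(S_n)''$ inside $\End(V\tp n)$ carrying the Markov trace, whose inductive limit is a hyperfinite $II_1$ factor when the character is extremal, and the inclusion is governed by a subfactor whose data is encoded by $\ptr(R)$. The key analytic step is to show that the higher cyclic traces $\Tr(\rho_R^{(k)}(c_k))/d^k$ on a $k$-cycle $c_k$, equivalently the relative commutants of the tower, are determined by the self-adjoint operator $\ptr(R)$, and that unitarity, involutivity and the Yang-Baxter equation force the spectrum of $\ptr(R)$ to consist of nonzero integers in which the multiplicity $m_v$ of each eigenvalue $v$ is a multiple of $|v|$. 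Each positive $v$ then contributes $m_v/v$ parts equal to $v$ to $\la$ and each negative $v$ contributes $m_v/|v|$ parts equal to $|v|$ to $\mu$; the absence of a kernel together with $\dim V=d$ forces $|\la|+|\mu|=d$. Combined with the normal forms, this shows simultaneously that the admissible parameters are exhausted by pairs of Young diagrams and that $R\sim S$ if and only if $\ptr(R)$ and $\ptr(S)$ are similar, which gives injectivity.

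I expect the genuine obstacle to be this converse, specifically the integrality and divisibility of the spectrum of $\ptr(R)$: the flip and block-flip examples make the statement plausible, but deducing it for an arbitrary involutive solution requires controlling the entire tower of relative commutants rather than $\ptr(R)$ alone—already the $k=4$ cyclic trace is a genuine contraction of $R$ against two partial traces, not a power of $\ptr(R)$. The subfactor analysis is precisely what converts this infinite family of conditions into the finite spectral data of a single operator, and carrying out that reduction—ruling out any ``continuous'' Thoma part and establishing the multiplicity divisibility—is the crux of the argument.
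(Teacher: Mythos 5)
Your plan follows the paper's proof essentially step for step: reduce $\sim$ to dimension plus Thoma parameters, realize every pair of partitions by a block normal form (your $R_{\la,\mu}$ is exactly the paper's iterated box-sum $1_{\la_1}\boxplus\cdots\boxplus(-1_{\mu_m})$ of Sect.~\ref{section:NormalForms}, with the flip on all cross-block tensors), compute $\ptr(R_{\la,\mu})=\bigoplus_i\la_i\id\oplus\bigoplus_j(-\mu_j)\id$, and use $\ptr R$ as the complete invariant via the subfactor $\N_R\subset\M_R$. The surjectivity half is complete and correct as written.

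The converse, however, which you rightly single out as the crux, is asserted rather than proved, and this is a genuine gap: you state that the cyclic traces are determined by $\ptr R$ and that its spectrum is forced to be integral with the multiplicity divisibility, but give no argument for either. The paper needs three separate non-routine inputs here. First, the identity $\chi_R(c_n)=d^{-n}\Tr_V(\ptr(R)^{n-1})$ is obtained by showing that the trace-preserving conditional expectation onto the relative commutant $\N_R'\cap\M_R$ agrees with the left partial trace on $R_1$ (Lemma~\ref{lemma:ER=El}, a $2$-norm limit of finite group averages) and then iterating via the Gohm--K\"ostler partial shifts and an ergodic-average argument (Prop.~\ref{proposition:GK-theorem}). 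Second, ruling out the ``continuous'' Thoma part and infinitely many parameters does not come from the subfactor at all, but from Wassermann's faithfulness criterion combined with the elementary count $n!>d^{2n}$ showing $\rho_R$ is not injective as an algebra map (Cor.~\ref{corollary:YB->gamma+finite}); your proposal never addresses this. Third, rationality and then integrality of the rescaled parameters come from a number-theoretic lemma (an Euler--Fermat argument) applied to the system $\sum_i(d\alpha_i)^{2n+1}+\sum_i(d\beta_i)^{2n+1}=\sum_j t_j^{2n}$, with $t_j$ the eigenvalues of $\ptr R$, together with the integrality of the unnormalized character values $\Tr_{V\tp{(2n+1)}}(R_1\cdots R_{2n})$. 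Only after all three does the spectral description of $\ptr R$ follow --- and in the paper it is deduced a posteriori by replacing $R$ with its normal form (Thm.~\ref{theorem:Young-Parameterization}), not directly from unitarity, involutivity and the Yang--Baxter equation as you suggest. Without the second and third ingredients your argument cannot exclude irrational or infinitely many Thoma parameters, so the bijection with pairs of Young diagrams does not yet follow.
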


Our analysis uses the fact that the $S_n$-representations $\rho_R^{(n)}$, $R\in\R_0(V)$, define a representation $\rho_R$ of the {\em infinite} symmetric group~$S_\infty$ (the group of all bijections of $\Nl$ that move only finitely many points) inside the infinite (algebraic) tensor product $\bigcup_n\End(V)\tp{n}$. As any infinite discrete group having no normal abelian subgroup of finite index, $S_\infty$ admits unitary representations which are not of type I \cite{Thoma:1964_2}, meaning that its irreducible representations are not classifiable in a reasonable manner \cite{KerovOlshanskiVershik:2004}. 
However, an explicit parameterization of its (normalized) extremal characters, corresponding to finite factor representations, is known from the work of Thoma \cite{Thoma:1964}. This parameterization depends on countably many continuous variables $0\leq\alpha_i,\beta_i\leq1$, $i\in\Nl$. These {\em Thoma parameters} are subject to two simple conditions defining a simplex $\Thoma$ which we recall in Sect.~\ref{section:characters}. For 
background  information on the representation theory of $S_\infty$, we refer readers to the recent monograph \cite{BorodinOlshanski:2017} and the literature cited therein. 

\medskip

The upshot of our approach is that any involutive R-matrix $R\in\R_0$ defines an extremal character $\chi_R$ of $S_\infty$, and $R,S\in\R_0$ are equivalent if and only if they have the same character and the same dimension. Such {\em Yang-Baxter characters} (Sect.~\ref{section:extremality}) can therefore be parameterized by a subset $\ThomaYB$ of Thoma's simplex~$\Thoma$, or, equivalently, in terms of the Hilbert-Poincar\'e series of $\rho_R$ \cite{Davydov:2000, Gurevich:1991}. By a faithfulness consideration and a theorem of Wassermann \cite{Wassermann:1981}, it is straightforward to show that $\ThomaYB\neq\Thoma$ (Sect.~\ref{section:faith}).

Despite the finite dimensional appearance of the Yang-Baxter equation, a full understanding of the subset $\ThomaYB\subset\Thoma$ and the equivalence relation $\sim$ seems to require tools from infinite dimensional analysis and operator algebras. In Sect.~\ref{section:subfactors}, we propose an approach based on subfactors arising from the subgroup $\{\sigma\in S_\infty\,:\,\sigma(1)=1\}\subset S_\infty$ in Yang-Baxter representations $\rho_R$. Independent of the Yang-Baxter equation, similar subfactors have been considered before, by Gohm and K\"ostler in a setting of noncommutative probability theory in \cite{GohmKostler:2010_2,GohmKostler:2011}, and by Yamashita in \cite{Yamashita:2012}.

We will show that any $R\in\R_0$, $R\neq\pm\id_{V\ot V}$, defines an inclusion of~II$_1$ factors. The trace-preserving conditional expectation of this subfactor can be computed, and we show that it is closely related to the partial trace of~$R$, $\ptr R=(\id_{\End V}\ot\Tr_V)(R)\in\End V$. The partial trace turns out to be a complete invariant of $\sim$:

\begin{itheorem}\label{theorem:PartialTraces-Introduction}
	Two R-matrices $R,S\in\R_0$ are equivalent if and only if they have similar partial traces, $\ptr R\cong \ptr S$.
\end{itheorem}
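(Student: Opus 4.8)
The plan is to translate the statement into the language of characters and then to read the partial trace off the character. Since $R$ is unitary and involutive it is self-adjoint, $R=R^{*}=R^{-1}$, and the partial trace of a self-adjoint operator is again self-adjoint; hence $\ptr R$ is a self-adjoint element of $\End V$. For self-adjoint operators similarity $\ptr R\cong\ptr S$ is the same as equality of their spectra (eigenvalues with multiplicities), and this already forces $\dim V=\dim W$. By the remarks following Definition~\ref{Definition:R0-Equivalence}, $R\sim S$ is in turn equivalent to $\dim R=\dim S$ together with $\chi_R=\chi_S$. The theorem therefore reduces to a single assertion: for $R\in\R_0$ the spectrum of $\ptr R$ and the character $\chi_R$ determine one another.

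First I would cut the character down to its values on cycles. An extremal character of $S_\infty$ is multiplicative over disjoint cycles, so $\chi_R$ is determined by the numbers $\chi_R(c_k)$, where $c_k$ is a $k$-cycle, $k\ge 2$. Writing $c_k=(1\,2)(2\,3)\cdots(k{-}1\,k)$, using \eqref{eq:ConstantYBE->Rep} and the tracial normalization $\chi_R(\sigma)=d^{-n}\Tr_{V\tp n}(\rho_R^{(n)}(\sigma))$, one obtains the closed ``staircase'' expression $\chi_R(c_k)=d^{-k}\Tr_{V\tp k}(R_1\cdots R_{k-1})$ with $R_i=\id\tp{(i-1)}\ot R\ot\id\tp{(k-i-1)}$. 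Tracing out the tensor legs one at a time rewrites this as the iterate of a single transfer operator $\Phi$ on $\End V$, namely $\Phi(X)=(\Tr_V\ot\id)\big((X\ot\id)R\big)$, giving $\chi_R(c_k)=d^{-k}\Tr_V\big(\Phi^{k-1}(\id_V)\big)$; its one-step value $\Phi(\id_V)=\ptr R$ is exactly the partial trace. Thus all character values are generated by iterating an operator whose first moment is $\ptr R$. The genuine difficulty is visible here: a priori $\Phi$ lives on the $d^{2}$-dimensional space $\End V$, so the higher values could depend on more than the $d$-dimensional datum $\ptr R$. The heart of the proof is to show that, because $R$ solves the Yang--Baxter equation and squares to one, this surplus information collapses and every $\chi_R(c_k)$ is a symmetric function of the eigenvalues of $\ptr R$ alone. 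This is precisely what refutes Gurevich's conjecture: the single scalar $\chi_R(c_2)=d^{-2}\Tr R=d^{-2}\Tr_V(\ptr R)$ is far from determining the class, whereas the whole spectrum of $\ptr R$ does.

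Following the operator-algebraic strategy announced in Sect.~\ref{section:subfactors}, I would realize this collapse through the inclusion of II$_1$ factors $\N=\rho_R(S_\infty^{(1)})''\subset\M=\rho_R(S_\infty)''$ attached to the stabilizer $S_\infty^{(1)}=\{\sigma\in S_\infty:\sigma(1)=1\}$, which is available for $R\neq\pm\id$. The trace-preserving conditional expectation $E\colon\M\to\N$ is an isomorphism invariant of the tracial pair $(\N\subset\M,\tau)$, and this pair is built functorially from the representation $\rho_R$, hence depends only on $\chi_R$. Computing $E$ explicitly should identify it with the map induced by $\ptr R$ on the first tensor leg, so that the spectrum of $\ptr R$ is recovered as the spectral datum of $E$. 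This yields the forward implication at once: if $R\sim S$ then $\chi_R=\chi_S$, the two inclusions are isomorphic, their conditional expectations correspond, and therefore $\ptr R$ and $\ptr S$ have equal spectra, i.e. $\ptr R\cong\ptr S$.

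For the converse I would invert this dictionary, using that the spectral data of $E$ reconstructs all the $\chi_R(c_k)$, hence $\chi_R$, hence (with equal dimension) the equivalence class. In view of Theorem~\ref{theorem:Rquot=YxY-Introduction} this amounts to matching the spectrum of $\ptr R$ with the pair of Young diagrams labelling its class; the examples $R=\id$, $R=F$ and the graded flips already exhibit the expected shape of this correspondence, in which a positive eigenvalue equal to a row length $\mu_i$ of the first diagram occurs with multiplicity $\mu_i$, and a negative eigenvalue $-\nu_j$ occurs with multiplicity $\nu_j$, so that the spectrum visibly recovers $(\mu,\nu)$ and conversely. The main obstacle throughout is this completeness step -- proving that the entire character is encoded in the finite-dimensional, self-adjoint operator $\ptr R$ rather than merely its trace -- and it is here that the subfactor computation of $E$, rather than a direct assault on the transfer operator $\Phi$, appears to be the decisive tool.
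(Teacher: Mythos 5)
Your reduction is the right one and matches the paper's: $R\sim S$ is equivalent to equal dimension plus $\chi_R=\chi_S$, Thoma multiplicativity cuts the character down to cycle values, and everything hinges on showing that $\chi_R(c_n)$ is a function of the spectrum of $\ptr R$ alone. Your transfer-operator formula $\chi_R(c_k)=d^{-k}\Tr_V(\Phi^{k-1}(\id_V))$ with $\Phi(X)=(\Tr_V\ot\id)\bigl((X\ot\id)R\bigr)$ is correct, and you correctly diagnose the crux: a priori $\Phi^{k-1}(\id_V)$ depends on all of $R$, and one must show it collapses to $(\ptr R)^{k-1}$. Once the identity $\chi_R(c_n)=d^{-n}\Tr_V((\ptr R)^{n-1})$ is in hand, both directions follow exactly as you say (equal power traces of selfadjoint operators give equal characteristic polynomials, hence similarity; and conversely). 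Your proposed detour through the Young-diagram classification for the converse is unnecessary and would in fact be backwards --- in the paper that classification is partly \emph{derived} from this theorem --- but your primary argument for the converse (the spectrum reconstructs all $\chi_R(c_k)$) is the right one and does not need it.

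The genuine gap is that the collapse itself is never proved; you write that computing the conditional expectation ``should identify it with the map induced by $\ptr R$,'' but this identification is the entire technical content of Section~\ref{section:subfactors}. Concretely, two nontrivial steps are missing. First, one must show $E_R(R_1)=E_l(R_1)$ (Lemma~\ref{lemma:ER=El}): this requires proving $2$-norm convergence of the finite averages $E_{R,n}(R_1)=\frac1n\sum_{j=2}^{n+1}\rho_R(\sigma_{1,j})$ to the \emph{left} partial trace, which rests on a comparison of the cycle structures of $\sigma\sigma_{1,k+1}$ and $\sigma'\sigma_1$ and a diagonal-limit argument using $E_{R,n}\circ E_{R,m}=E_{R,n}$. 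Second, and more importantly, one must upgrade the one-step statement to $E_R(\rho_R(c_n))=E_l(R_1)^{n-1}$ (Prop.~\ref{proposition:GK-theorem}); the paper does this with the partial shifts $\gamma_m$, the identity \eqref{eq:gamma-m}, the fact that $\M_R^{\gamma_1}=\N_R'\cap\M_R$, and the mean ergodic theorem to factor $E_R(C_{n+1})=E_R(C_n)\cdot E_l(R_1)$. Neither step is routine, and the multiplicativity in the second is exactly where the ``surplus information'' in $\Phi$ is killed. A smaller but real slip: the relevant conditional expectation is the one onto the relative commutant $\N_R'\cap\M_R$, not onto $\N_R$ as you write --- the partial trace $E_l(R_1)\in\End V\ot1\ot\cdots$ lies in the relative commutant of $\N_R=\rho_R(\Cl[S_\infty^>])''$, whereas $E_{\N_R}(R_1)$ would be a different object in a different algebra.
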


Our subfactor approach also completes the characterization of $\ThomaYB$ as a subset of $\Thoma$. Using the standard notation for Thoma parameters (recalled in Thm.~\ref{theorem:ThomaParameters}), we find the following result.

\begin{itheorem}\label{theorem:ThomaYB-Introduction}
	An extremal character of $S_\infty$ is a Yang-Baxter character if and only if its Thoma parameters $\{\alpha_i\}_i$, $\{\beta_j\}_j$ satisfy
	\begin{enumerate}
		\item Only finitely many $\alpha_i$, $\beta_j$ are non-zero.
		\item $\sum_i\alpha_i+\sum_j\beta_j=1$.
		\item All $\alpha_i$, $\beta_j$ are rational. 
	\end{enumerate}
	The Thoma parameters of an R-matrix of dimension $d$ satisfy $d\alpha_i,d\beta_i\in\Nl$.
\end{itheorem}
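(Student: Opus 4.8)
The plan is to deduce Theorem~\ref{theorem:ThomaYB-Introduction} from the classification in Theorem~\ref{theorem:Rquot=YxY-Introduction} by making the correspondence between Young diagrams and Thoma parameters completely explicit. Since every equivalence class contains a normal form representative and equivalent R-matrices share the same character, it suffices to compute the character of one normal form per class. I claim the resulting dictionary is that the pair $(\lambda,\mu)$ with $|\lambda|+|\mu|=d$ corresponds to the extremal character whose only nonzero Thoma parameters are $\alpha_i=\lambda_i/d$ and $\beta_j=\mu_j/d$, where $\lambda_i,\mu_j$ are the row lengths of the two diagrams. Granting this, conditions (i)--(iii) and the quantization are immediate: finite support is finiteness of the number of rows, the normalization $\sum_i\alpha_i+\sum_j\beta_j=(|\lambda|+|\mu|)/d=1$ is exactly that the diagrams carry $d$ boxes, and $d\alpha_i=\lambda_i\in\Nl$, $d\beta_j=\mu_j\in\Nl$ gives both rationality and the dimension bound.

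The technical core is this character computation. Because an extremal character is multiplicative over disjoint cycles, it is determined by its values $p_k=\chi_R((1\,2\cdots k))$ on $k$-cycles, and in Thoma's parameterization these are the power sums $p_k=\sum_i\alpha_i^k+(-1)^{k-1}\sum_j\beta_j^k$. Writing the $k$-cycle as a product $s_1\cdots s_{k-1}$ of adjacent transpositions, so that $\rho_R^{(k)}(s_1\cdots s_{k-1})=R_1\cdots R_{k-1}$ with $R_i=\rho_R^{(k)}(s_i)$, I would evaluate the normalized trace by tracing out the tensor factors $k,k-1,\dots,2$ one at a time. Each step replaces the outermost $R$ by a partial trace and shows
\begin{equation*}
\chi_R\big((1\,2\cdots k)\big)=\frac{1}{d^{k}}\Tr_V\big(\Phi^{k-1}(\id_V)\big),\qquad \Phi(X)\deq(\id_V\ot\Tr_V)\big(R\,(\id_V\ot X)\big),
\end{equation*}
where $\Phi(\id_V)$ is the partial trace of $R$, equal to $\ptr R$ for the flip-symmetric normal forms. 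Evaluating the right-hand side on the block-diagonal normal form attached to $(\lambda,\mu)$ -- on which $\Phi$ preserves the abelian algebra of block scalars and acts as multiplication by $\lambda_i$ on the $i$-th block of $\lambda$ and by $-\mu_j$ on the $j$-th block of $\mu$ -- collapses it to $\Tr_V((\ptr R)^{k-1})/d^{k}$, and the spectrum of $\ptr R$ (eigenvalue $\lambda_i$ with multiplicity $\lambda_i$, eigenvalue $-\mu_j$ with multiplicity $\mu_j$) produces precisely $p_k=\sum_i(\lambda_i/d)^k+(-1)^{k-1}\sum_j(\mu_j/d)^k$.

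Two remarks turn this into the full statement. First, by Theorem~\ref{theorem:PartialTraces-Introduction} the similarity class of $\ptr R$, and hence its spectrum, is an invariant of $\sim$; the identity $p_k=\Tr_V((\ptr R)^{k-1})/d^{k}$ therefore holds for every representative, so the Thoma parameters of $\chi_R$ are literally the (signed) eigenvalues of $\tfrac1d\ptr R$, which is the conceptual reason the partial trace controls the character. Second, both implications now follow. For necessity, a Yang-Baxter character equals $\chi_R$ for some $R\in\R_0$ of dimension $d$; Theorem~\ref{theorem:Rquot=YxY-Introduction} attaches a pair with $d$ boxes and the computation yields parameters obeying (i)--(iii) with $d\alpha_i,d\beta_j\in\Nl$. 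For sufficiency, given parameters obeying (i)--(iii) I would take $d$ to be the least common denominator of the finitely many nonzero $\alpha_i,\beta_j$; condition (ii) then forces $\sum_i d\alpha_i+\sum_j d\beta_j=d$, so $\lambda_i\deq d\alpha_i$ and $\mu_j\deq d\beta_j$ define a pair of Young diagrams with $d$ boxes, whose normal form R-matrix realizes the prescribed character.

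The step I expect to be the main obstacle is the explicit character computation: verifying that the iterated partial trace collapses the product $R_1\cdots R_{k-1}$ into powers of $\Phi$ without cross terms, and that $\Phi$ really diagonalizes on the block algebra of the normal form with the row lengths as eigenvalues. This requires knowing the normal forms concretely and some care with the bookkeeping of blocks, and with the convention (row versus column lengths, and which of the two partial traces is used) so that it matches the diagram attached by Theorem~\ref{theorem:Rquot=YxY-Introduction}. By contrast, the extremality of $\chi_R$ -- needed for Thoma's theorem to apply and for the reduction to $k$-cycles -- and the structural fact that $\ptr R$ has integer spectrum with multiplicities equal to the eigenvalues are inputs I would draw from the earlier sections and from Theorems~\ref{theorem:Rquot=YxY-Introduction} and~\ref{theorem:PartialTraces-Introduction}.
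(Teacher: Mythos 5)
Your sufficiency argument (given parameters satisfying (i)--(iii), clear denominators and build the normal form R-matrix) is essentially the paper's proof of Thm.~\ref{theorem:ThomaYB1:1}, and your character computation for normal forms is sound: the identity $\chi_N(c_k)=d^{-k}\Tr_V(\Phi^{k-1}(\id_V))$ is a correct iterated partial trace, and on normal forms $\Phi^{k-1}(\id_V)$ does collapse to $(\ptr N)^{k-1}$ (the paper notes this follows from the exchange relation \eqref{eq:NptrN}). The problem is the necessity direction, which is where the real content of the theorem lies, and there your argument is circular. You deduce that an arbitrary $R\in\R_0$ has parameters of the form $\lambda_i/d,\mu_j/d$ by invoking Theorem~\ref{theorem:Rquot=YxY-Introduction} to ``attach a pair of Young diagrams with $d$ boxes'' to $[R]$. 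But the map $[R]\mapsto(a,b)$ with $a_i=d\alpha_i$, $b_i=d\beta_i$ only lands in pairs of Young diagrams because $d\alpha_i,d\beta_i\in\Nl$ --- and that integrality is exactly the statement you are trying to prove. In the paper, Thm.~\ref{theorem:Young-Parameterization} (which gives Theorem~\ref{theorem:Rquot=YxY-Introduction}) explicitly cites Thm.~\ref{theorem:YB-parameters}, i.e.\ the necessity half of the present theorem, to know that $(a,b)$ is a pair of integer partitions. The same circularity infects your claim that the spectrum of $\ptr R$ consists of integers $\lambda_i$ with multiplicity $\lambda_i$: that is Thm.~\ref{theorem:Young-Parameterization}~$ii)$, again downstream of the integrality.

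What is missing is any independent argument that the Thoma parameters of an arbitrary $R\in\R_0$ are rational. Wassermann's theorem plus the non-faithfulness of $\rho_R$ (Cor.~\ref{corollary:YB->gamma+finite}) gives you (i) and (ii) for free, but a priori the finitely many nonzero $\alpha_i,\beta_j$ could be irrational, in which case no normal form shares the character of $R$ and your dictionary never gets off the ground. The paper closes this gap in two steps that have no counterpart in your proposal: first, the subfactor/conditional-expectation analysis (Lemma~\ref{lemma:ER=El} and Prop.~\ref{proposition:GK-theorem}) establishes $\chi_R(c_n)=d^{-n}\Tr_V(\ptr(R)^{n-1})$ for \emph{every} $R\in\R_0$, not just for normal forms; second, the resulting system $\sum_i\alpha_i^n+(-1)^{n+1}\sum_i\beta_i^n=d^{-n}\sum_j t_j^{n-1}$ relating the parameters to the (merely real) eigenvalues $t_j$ of $\ptr R$ is fed into the number-theoretic lemma preceding Def.~\ref{definition:TYB}, which extracts rationality from the odd-$n$ instances and then integrality of $d\alpha_i,d\beta_i$ from the integrality of the non-normalized character values $\Tr_{V\tp{n}}(R_1\cdots R_{2n})$. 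Citing Theorem~\ref{theorem:PartialTraces-Introduction} does not substitute for this: that theorem tells you the partial trace is a complete invariant, but says nothing about its spectrum being integral, and its own proof already relies on the same key relation. To repair your proof you would either have to reproduce this analytic-plus-arithmetic argument, or find another route to the statement that every equivalence class in $\R_0$ contains a normal form.
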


It is interesting to note that although the Yang-Baxter representations \eqref{eq:ConstantYBE->Rep} have a quite special form, their Thoma parameters form a dense subset of $\Thoma$. Several different approaches to characters and representations of $S_\infty$ exist in the literature, of which we mention here in particular the asymptotic character theory of Kerov and Vershik \cite{VershikKerov:1981,VershikKerov:1982}, and the works of Olshanski and Okounkov on spherical representations \cite{Olshanski:1990,Okounkov:1997}, partly in parallel with Wassermann \cite{Wassermann:1981}.

One half of the proof of Thm.~\ref{theorem:ThomaYB-Introduction} relies on the operator-algebraic techniques mentioned above, and the other half (showing that any set of Thoma parameters $\{\alpha_i\}_i$, $\{\beta_j\}_j$ satisfying the conditions $i)$-$iii)$ is realized by some Yang-Baxter character $\chi_R$) is based on a constructive procedure for generating R-matrices which we develop in Sect.~\ref{section:NormalForms}. The main idea of this construction is to find a good replacement for taking direct sums of R-matrices/representations which respects the Yang-Baxter equation \eqref{eq:YBE}. We define a suitable binary operation $\boxplus$ on $\R$ which enables us to construct an explicit normal form R-matrix for each equivalence class in~$\Rquot$. Together with a suitably defined tensor product of R-matrices and a compatible $\lambda$-operation, the quotient $\Rquot$ has the structure of a $\lambda$-semi ring.

\medskip

Because of the rationality and finiteness properties of $\ThomaYB$ spelled out in Thm.~\ref{theorem:ThomaYB-Introduction}, it is useful to switch to a parameterization in terms of the integer {\em rescaled Thoma parameters} $a_i:=d\alpha_i$, $b_i:=d\beta_i$. The two integer partitions defined by the $a_i$ and $b_i$ lie at the root of our classification result in  Thm.~\ref{theorem:Rquot=YxY-Introduction}. We also recast this structure in terms of spectral data, reminiscent of previous work on the representation theory of $S_\infty$ (see, in particular, Okounkov \cite{Okounkov:1997} and Kerov, Olshanski, and Vershik \cite{KerovOlshanskiVershik:2004}): The eigenvalues of the partial trace $\ptr R$ of $R\in\R_0$ uniquely determine the rescaled Thoma parameters (see Thm.~\ref{theorem:Young-Parameterization}~$ii)$ for details).

\medskip

In addition to these main results summarized in the  Theorems~\ref{theorem:Rquot=YxY-Introduction}--\ref{theorem:ThomaYB-Introduction}, we also investigate the $C^*$-algebraic and combinatorial properties of Yang-Baxter representations. In Sect.~\ref{section:YBReps}, we use $K$-theory to characterize the equivalence $R\sim S$ in terms of approximate unitary equivalence of the homomorphisms $\rho_R$, $\rho_S$ (Thm.~\ref{thm:equiv_relations}) and recover a result of Kerov and Vershik \cite{VershikKerov:1983} in our Yang-Baxter setting.  As $K_0(C^*S_\infty)$ is isomorphic to a quotient of the ring of symmetric functions \cite{VershikKerov:1983}, this also enables us to give an explicit formula for the decomposition of the Yang-Baxter representations $\rho_R^{(n)}$ into irreducibles in terms of symmetric functions (Prop.~\ref{Proposition:Multiplicities}), and to compute the Hilbert-Poincar\'e series of $\rho_R$.

Our final Sect.~\ref{section:examples} is devoted to a discussion of examples, including in particular Yang-Baxter representations of the Temperley-Lieb algebra \cite{TemperleyLieb:1971_2} at loop parameter $\delta=2$. Our results allow us to classify such representations completely in terms of the dimension and trace of the underlying R-matrix (Prop.~\ref{proposition:TL-Rs}), thereby complementing results of Gurevich and Bytsko \cite{Gurevich:1991,Bytsko:2015}.

\bigskip

While it is clear that Theorems~\ref{theorem:Rquot=YxY-Introduction}--\ref{theorem:ThomaYB-Introduction} do not hold verbatim for general R-matrices (dropping the involutivity assumption), we do expect that many aspects of our techniques and results can be generalized to other families of R-matrices, such as those underlying knot polynomials, for which $\rho_R$ factors through a Hecke algebra \cite{Jones:1987,Turaev:1988,Wenzl:1988_2}.

\section{R-matrices and characters of \texorpdfstring{\boldmath{$S_\infty$}}{S}}\label{section:characters}

The infinite symmetric group $S_\infty$ is the group of all bijections of $\Nl$ that move only finitely many points, a countable discrete group with infinite (non-trivial) conjugacy classes. A {\em character} of $S_\infty$ is defined as a positive definite class function $\chi:S_\infty\to\Cl$ that is {\em normalized} at the identity, $\chi(e)=1$. For example, the trivial representation has the constant character 1. The characters of $S_\infty$ form a simplex, the extreme points of which are called {\em extremal characters} (or indecomposable characters). An example of an extremal character is the Plancherel trace $\chi(\sigma)=\delta_{\sigma,e}$.

\medskip

Thoma found the following characterization of extremality of characters of~$S_\infty$, often called Thoma multiplicativity. In its formulation, we define the support of $\sigma\in S_\infty$ as the complement of the fixed points of $\sigma:\Nl\to\Nl$.

\begin{theorem}{\bf\cite{Thoma:1964}}\label{theorem:ThomaMultiplicativity}
     A character $\chi$ of $S_\infty$ is extremal if and only if for any $\sigma,\sigma'\in S_\infty$ with disjoint supports, it holds that $\chi(\sigma\sigma')=\chi(\sigma)\chi(\sigma')$.
\end{theorem}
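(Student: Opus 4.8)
The plan is to pass to the GNS picture, translate extremality into a factoriality statement, and then read off both implications from an ergodic averaging over the conjugation action of $S_\infty$. A character $\chi$ yields a GNS triple $(\Hil_\chi,\pi_\chi,\Omega_\chi)$ in which $\Omega_\chi$ is cyclic and, because $\chi$ is a class function, a trace vector; hence $\tau(x)\deq\langle\Omega_\chi,x\Omega_\chi\rangle$ is a faithful normal trace on $M_\chi\deq\pi_\chi(S_\infty)''$, with $\Omega_\chi$ cyclic and separating. I would invoke the standard fact that the characters of a discrete group form a Choquet simplex (the tracial states of its group $C^*$-algebra) whose extreme points are exactly those $\chi$ for which $M_\chi$ is a factor, i.e.\ $Z(M_\chi)=\Cl\,1$. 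Thus the theorem reduces to the equivalence: $M_\chi$ is a factor if and only if $\chi(\sigma\sigma')=\chi(\sigma)\chi(\sigma')$ whenever $\supp\sigma\cap\supp\sigma'=\emptyset$.

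The engine of the proof is a conjugation average. Let $U_g$ denote the unitary on $\Hil_\chi$ implementing $a\Omega_\chi\mapsto\pi_\chi(g)a\pi_\chi(g)^*\Omega_\chi$, and set $A_N\deq|S_N|^{-1}\sum_{g\in S_N}U_g$, where $S_N\subset S_\infty$ permutes $\{1,\dots,N\}$. Since $S_N$ is finite, $A_N$ is exactly the orthogonal projection onto the $S_N$-fixed vectors; as $N$ grows these projections decrease and converge strongly to the projection $P$ onto the $S_\infty$-fixed vectors. A fixed vector corresponds to an element commuting with every $\pi_\chi(g)$, so this fixed space is $\overline{Z(M_\chi)\Omega_\chi}$ and $P$ implements the trace-preserving conditional expectation $E_{Z(M_\chi)}$, concretely $A_N\pi_\chi(\sigma')\Omega_\chi\to E_{Z(M_\chi)}(\pi_\chi(\sigma'))\Omega_\chi$. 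In particular $M_\chi$ is a factor precisely when $P$ has rank one, i.e.\ $P\,\pi_\chi(\sigma')\Omega_\chi=\chi(\sigma')\Omega_\chi$ for all $\sigma'$.

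The second ingredient is an elementary genericity lemma: for fixed finite sets $S,T\subset\Nl$, the fraction of $g\in S_N$ with $g(T)\cap S\neq\emptyset$ tends to $0$ as $N\to\infty$, since a random $g$ moves the set $T$ to one avoiding $S$ with probability $\to1$. Evaluating a matrix element of the average then gives, for any $w,\sigma'\in S_\infty$,
\[
\langle\pi_\chi(w)\Omega_\chi,\,A_N\pi_\chi(\sigma')\Omega_\chi\rangle=\frac{1}{|S_N|}\sum_{g\in S_N}\chi\big(w^{-1}\,g\sigma'g^{-1}\big),
\]
and for all but a vanishing fraction of $g$ the permutations $w^{-1}$ and $g\sigma'g^{-1}$ have disjoint supports; the remaining terms are bounded by $|\chi|\le1$ and are negligible.

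Both directions now fall out. If $\chi$ is multiplicative, then for the generic $g$ the summand equals $\chi(w^{-1})\chi(\sigma')$, so the limit is $\overline{\chi(w)}\chi(\sigma')=\langle\pi_\chi(w)\Omega_\chi,\chi(\sigma')\Omega_\chi\rangle$; as $w$ ranges over $S_\infty$ this forces $P\,\pi_\chi(\sigma')\Omega_\chi=\chi(\sigma')\Omega_\chi$, hence $E_{Z(M_\chi)}=\tau(\cdot)1$ and $Z(M_\chi)=\Cl\,1$, so $\chi$ is extremal. Conversely, if $\chi$ is extremal then $M_\chi$ is a factor and $P$ has rank one; taking $w=\sigma^{-1}$ and a permutation $\sigma'$ with $\supp\sigma\cap\supp\sigma'=\emptyset$, the generic summand $\chi(\sigma\,g\sigma'g^{-1})$ equals $\chi(\sigma\sigma')$, because $\sigma$ and $g\sigma'g^{-1}$ then have disjoint supports and the product has the same cycle type as $\sigma\sigma'$; comparing with the limit $\langle\pi_\chi(\sigma^{-1})\Omega_\chi,\chi(\sigma')\Omega_\chi\rangle=\chi(\sigma)\chi(\sigma')$ yields $\chi(\sigma\sigma')=\chi(\sigma)\chi(\sigma')$. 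I expect the main obstacle to be the rigorous identification of the strong limit $P$ of the finite-group averages with the central conditional expectation $E_{Z(M_\chi)}$ (controlling the passage to $L^2(M_\chi,\tau)$ and confirming that the fixed-point space is exactly $\overline{Z(M_\chi)\Omega_\chi}$), together with a careful statement of the extremal-versus-factor dictionary; the combinatorial genericity and the averaging itself are then routine.
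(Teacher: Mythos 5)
The paper does not prove this theorem at all: it is quoted verbatim from Thoma's 1964 paper with a citation, so there is no internal proof to compare against. Judged on its own, your argument is correct and is essentially the standard modern operator-algebraic proof of Thoma multiplicativity (the one found, e.g., in Borodin--Olshanski's monograph and going back to the Vershik--Kerov/Okounkov circle of ideas), as opposed to Thoma's original harmonic-analytic derivation. All the ingredients check out: extremality of a tracial state is equivalent to factoriality of $\pi_\chi(S_\infty)''$; the finite-group averages $A_N$ are genuinely the orthogonal projections onto the $S_N$-fixed vectors and decrease strongly to the projection $P$ onto the joint fixed space; the genericity estimate for $g(\supp\sigma')\cap\supp w=\emptyset$ is elementary; and the cycle-type argument $\chi(\sigma\,g\sigma'g^{-1})=\chi(\sigma\sigma')$ for generic $g$ is exactly right. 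One structural remark worth internalizing: the two directions use the two inclusions of the identification $\mathrm{ran}\,P=\overline{Z(M_\chi)\Omega_\chi}$ asymmetrically. For ``multiplicative $\Rightarrow$ extremal'' you only need the easy inclusion $Z(M_\chi)\Omega_\chi\subset\mathrm{ran}\,P$ (central elements give fixed vectors, and $\Omega_\chi$ separating then kills the center once $P$ is rank one). For ``extremal $\Rightarrow$ multiplicative'' you need the harder inclusion, which follows by taking a WOT cluster point $y$ of the uniformly bounded averages $|S_N|^{-1}\sum_{g\in S_N}\pi_\chi(g)x\pi_\chi(g)^*$, observing that $y$ is invariant under conjugation by every $h\in S_\infty$ (invariance holds for all $N$ past the first $N$ with $h\in S_N$), hence $y\in Z(M_\chi)=\Cl 1$ and $P(x\Omega_\chi)\in\Cl\Omega_\chi$. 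With that spelled out, the obstacle you flag at the end is fully resolved and the proof is complete.
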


Some elements of $S_\infty$ will appear repeatedly. We write $\sigma_{i,j}=(i,j)$ for two-cycles, and specifically $\sigma_k=\sigma_{k, k+1}$ for neighboring transpositions, the standard generators of $S_\infty$. General $n$-cycles will be denoted $c_n\in S_\infty$. In case a specific choice of $n$-cycle is necessary, we choose
\begin{subequations}
\begin{align}\label{eq:cycle-reps}
	c_n
	&=
	\sigma_{n-1}\cdots\sigma_2\sigma_1
	=
	\sigma_{1,2}\sigma_{1,3}\cdots\sigma_{1,n}\,,\\
	\sigma_{1,n}
	&=
	\sigma_{n-1}\cdots\sigma_2\sigma_1\sigma_2\cdots\sigma_{n-1}\,.
	\label{eq:2cyclesingenerators}
\end{align}
\end{subequations}
In view of the above theorem and the cycle decomposition of permutations, an extremal character $\chi$ of $S_\infty$ is uniquely determined by its values on $n$-cycles. For a general group element $\sigma\in S_\infty$, one then has $$\chi(\sigma)=\prod_{n\geq2}\chi(c_n)^{k_n},$$ where~$k_n$ is the number of $n$-cycles in the decomposition of $\sigma$ into disjoint cycles.

\subsection{Extremality and Thoma's parameterization}\label{section:extremality}

We now connect $S_\infty$ to R-matrices by showing that any R-matrix defines an extremal character and corresponding factor representation of $S_\infty$. We will be working with the infinite tensor product $\E_0\deq\bigotimes_{n\geq1}\End V$ (defined only algebraically at this point), with inclusions fixed by tensoring with $\id_V$ in the last factor. With the group inclusions $S_n\subset S_{n+1}\subset S_\infty$ defined by letting $\sigma\in S_n$ act on $\Nl$ by keeping all $j>n$ fixed, the system of representations $\rho_R^{(n)}$, $R\in\R_0(V)$ is coherent and defines a ${}^*$-homomorphism $\rho_R:\Cl[S_\infty]\to\E_0$. The generators $\sigma_i$, $i\in\Nl$, are mapped to
\begin{align}
	R_i\deq \rho_R(\sigma_i)=1\tp{(i-1)}\ot R\ot 1\ot \ldots\,,
\end{align}
where here and hereafter, we write $1$ instead of $\id_V$ when the base space is clear from the context. Note that $R_i$ can be viewed as an element of $\End V\tp{n}$ for $n\geq i+1$, or of $\E_0$.

We refrain from viewing $\rho_R$ as a representation on $\bigotimes_{n\geq1}V$, as the definition of this space depends on choices. Our $S_\infty$-representations will be defined by composing $\rho_R$ with the GNS representation of $\E_0$ with respect to its unique normalized trace\footnote{See Sec.~\ref{section:ProductStates} for different choices of states on $\E_0$.},
\begin{align}\label{eq:tau}
	\tau=\bigotimes_{n\geq1}\frac{\Tr_V}{d}:\E_0\to\Cl\,.
\end{align}

\begin{proposition}\label{proposition:chiR-extremal}
     Let $R\in\R_0(V)$. Then
     \begin{align}\label{eq:DefChiR}
	  \chi_R\deq \tau\circ\rho_R
     \end{align}
     is an extremal character of $S_\infty$. On an $n$-cycle $c_n$, $n\geq2$, it evaluates to
     \begin{align}\label{eq:YB-Character-on-n-cycles}
		\chi_R(c_n)=d^{-n}\,\Tr_{V\tp{n}}(R_1\cdots R_{n-1}),\qquad d=\dim V.
     \end{align}
\end{proposition}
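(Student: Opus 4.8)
The plan is to prove Proposition \ref{proposition:chiR-extremal} in two parts: first establish the formula \eqref{eq:YB-Character-on-n-cycles} by direct computation, and then deduce extremality from Thoma multiplicativity (Thm.~\ref{theorem:ThomaMultiplicativity}). Since $\chi_R = \tau\circ\rho_R$ is by construction a normalized positive definite class function (being the composition of a $*$-homomorphism with the unique normalized trace $\tau$), the only two things that genuinely require argument are the explicit value on cycles and the multiplicative factorization property.

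\textbf{Computing the value on $n$-cycles.} First I would use the presentation \eqref{eq:cycle-reps} of the $n$-cycle, $c_n = \sigma_{n-1}\cdots\sigma_2\sigma_1$, so that under $\rho_R$ it maps to $R_{n-1}\cdots R_2 R_1 \in \End V\tp{n}$ (here I view each $R_i$ inside $\End V\tp{n}$ since $c_n$ moves only the first $n$ points). Applying the trace $\tau$ from \eqref{eq:tau}, I would note that $\tau$ restricted to $\End V\tp{n}$ is exactly $d^{-n}\Tr_{V\tp{n}}$, because all tensor factors beyond the $n$-th act as the identity and contribute trivially under the normalized trace. This gives $\chi_R(c_n) = d^{-n}\Tr_{V\tp{n}}(R_{n-1}\cdots R_1)$. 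Since the trace is invariant under reversing the order of a product (using cyclicity, or just $\Tr(AB)=\Tr(BA)$ iterated, noting the $R_i$ are self-adjoint so taking adjoints reverses the product without changing the trace), this equals $d^{-n}\Tr_{V\tp{n}}(R_1\cdots R_{n-1})$, which is \eqref{eq:YB-Character-on-n-cycles}.

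\textbf{Proving extremality.} For this I would verify the Thoma multiplicativity criterion: for $\sigma,\sigma'\in S_\infty$ with disjoint supports, $\chi_R(\sigma\sigma')=\chi_R(\sigma)\chi_R(\sigma')$. The key structural fact is that if $\supp\sigma$ and $\supp\sigma'$ are disjoint finite subsets of $\Nl$, then $\rho_R(\sigma)$ and $\rho_R(\sigma')$ act on \emph{disjoint} blocks of tensor factors. More precisely, after possibly relabeling, $\rho_R(\sigma)$ is supported on the tensor factors indexed by (a neighborhood of) $\supp\sigma$ and $\rho_R(\sigma')$ on those indexed by $\supp\sigma'$; since these index sets are disjoint, the two operators live in commuting subalgebras of $\E_0$ of the form $\End V\tp{A}$ and $\End V\tp{B}$ with $A\cap B=\emptyset$. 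Because $\tau$ is a product state \eqref{eq:tau}, it factorizes across such a tensor decomposition: $\tau(xy)=\tau(x)\tau(y)$ whenever $x,y$ are supported on disjoint factors. Hence $\chi_R(\sigma\sigma')=\tau(\rho_R(\sigma)\rho_R(\sigma'))=\tau(\rho_R(\sigma))\tau(\rho_R(\sigma'))=\chi_R(\sigma)\chi_R(\sigma')$, and Thm.~\ref{theorem:ThomaMultiplicativity} yields extremality.

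\textbf{The main obstacle} I anticipate is the precise bookkeeping in the disjoint-support argument. The subtlety is that an $R_i = 1\tp{(i-1)}\ot R\ot 1\ot\cdots$ acts on two \emph{adjacent} tensor factors $i,i+1$, whereas the group-theoretic support of a transposition $\sigma_{i,j}$ need not correspond to adjacent factors; from \eqref{eq:2cyclesingenerators} one sees that $\sigma_{1,n}$ expands into a product of neighboring $R_k$'s sweeping across factors $1,\dots,n$. So I must argue carefully that for a permutation with support $\supp\sigma\subseteq\{1,\dots,m\}$, the operator $\rho_R(\sigma)$ lies in $\End V\tp{m}$, and that for two permutations with disjoint supports one can arrange (using that $S_\infty$-elements have finite support and that factors outside the support carry identities) genuinely commuting, tensor-disjoint representatives. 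Once the factorization of $\tau$ across disjoint tensor blocks is set up cleanly, the rest is immediate.
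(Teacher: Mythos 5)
Your proposal is correct and follows essentially the same route as the paper: the cycle formula by direct evaluation of $\tau$ on $R_1\cdots R_{n-1}$ (the paper simply uses the $n$-cycle $\sigma_1\cdots\sigma_{n-1}$, so no reversal argument is needed, and in any case both orderings give the same value since $\chi_R$ is a class function), and extremality via Thoma multiplicativity using the product structure of $\tau$ over disjoint tensor blocks. The ``relabeling'' subtlety you flag is resolved exactly as you anticipate: since $\chi_R$ is a class function one may conjugate so that $\supp\sigma\subset\{1,\dots,n\}$ and $\supp\sigma'\subset\{n+1,\dots,n+m\}$, whence $\rho_R(\sigma)=\rho_R^{(n)}(\sigma)\ot 1\tp{m}$ and $\rho_R(\sigma')=1\tp{n}\ot\rho_R^{(m)}(\sigma')$ live on disjoint blocks and the trace factorizes.
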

\begin{proof}
     By standard properties of the trace, $\chi_R$ is a normalized positive class function. To show that $\chi_R$ is also extremal, we have to verify that it factorizes over permutations $\sigma,\sigma'\in S_\infty$ with disjoint supports (Thm.~\ref{theorem:ThomaMultiplicativity}). 
     
     Let $\sigma,\sigma'\in S_\infty$ have disjoint supports. Taking into account that $\chi_R$ is a class function, we may assume without loss of generality that $\supp\sigma\subset\{1,\ldots,n\}$ and $\supp\sigma'\subset\{n+1,\ldots,n+m\}$ for some $n,m\in\Nl$.
     
     Setting $N\deq n+m$, we then have $\rho_R^{(N)}(\sigma)=\rho_R^{(n)}(\sigma)\ot1\tp{m}$ and $\rho_R^{(N)}(\sigma')=1\tp{n}\ot\rho_R^{(m)}(\sigma')$. Using $\Tr_{V\ot W}(A\ot B)=\Tr_V(A)\Tr_W(B)$, we arrive at
     \begin{align*}
	  \chi_R(\sigma\sigma')
	  &=
	  d^{-N}\Tr_{V\tp{N}}((\rho_R^{(n)}(\sigma)\ot1\tp{m})(1\tp{n}\ot\rho_R^{(m)}(\sigma'))
	  \\
	  &=
	  d^{-n}\Tr_{V\tp{n}}(\rho_R^{(n)}(\sigma))\cdot d^{-m}\Tr_{V\tp{m}}((\rho_R^{(m)}(\sigma'))
	  \\
	  &=
	  \chi_R(\sigma)\chi_R(\sigma')\,,
     \end{align*}
     and the proof of extremality of $\chi_R$ is finished.
     
     For the second statement, we only need to note that $\rho_R$ \eqref{eq:ConstantYBE->Rep} maps the $n$-cycle $\sigma_1\sigma_2\cdots\sigma_{n-1}$ to 
     $R_1R_2\cdots R_{n-1}\in\End(V)\tp{n}$.     
\end{proof}

We will call the characters $\chi_R$, $R\in\R_0$, {\em Yang-Baxter characters} of $S_\infty$. As we just demonstrated, every Yang-Baxter character is extremal. We will see in the next section that the converse is not true: not every extremal character is Yang-Baxter. 

\bigskip

Using the representation theory of finite groups and the inductive limit definition of $S_\infty$, it follows from Prop.~\ref{proposition:chiR-extremal} that two R-matrices $R,S\in\R_0$ are equivalent in the sense of Def.~\ref{Definition:R0-Equivalence} if and only if they have the same character and the same dimension. (As we work with normalized characters, the dimension is not contained in the character.) Thus the dimension and the sequence of traces \eqref{eq:YB-Character-on-n-cycles} (indirectly) characterize the equivalence classes $\Rquot$.

\medskip

Thoma not only found a criterion for characterizing extremal characters, but also gave a classification in terms of an infinite dimensional simplex.

\begin{theorem}{\bf\cite{Thoma:1964}}\label{theorem:ThomaParameters}
  Let $\Thoma$ denote the collection of all sequences $\{\alpha_i\}_{i\in\Nl}$, $\{\beta_i\}_{i\in\Nl}$ of real numbers such that
  \begin{enumerate}
  \item $\alpha_i\geq0$ and $\beta_i\geq0$,
  \item $\alpha_i\geq\alpha_{i+1}$ and $\beta_i\geq\beta_{i+1}$,
  \item $\sum_i\alpha_i+\sum_j\beta_j\leq1$.
  \end{enumerate}
  The pairs of sequences $(\alpha,\beta)\in\Thoma$ are in bijection with extremal characters of $S_\infty$. On an $n$-cycle, the character $\chi$ corresponding to $(\alpha,\beta)\in\Thoma$ takes the value
  \begin{align}\label{eq:Thoma-Formula}
    \chi(c_n)=\sum_i\alpha_i^n+(-1)^{n+1}\sum_i\beta_i^n\,,\qquad n\geq2.
  \end{align}
\end{theorem}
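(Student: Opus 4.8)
This is a classical theorem of Thoma, and I would prove it by translating characters of $S_\infty$ into the language of the ring of symmetric functions $\symm$ and then invoking the Edrei--Thoma classification of totally positive sequences. The plan is to attach to each character an algebra homomorphism $\varphi\colon\symm\to\Cl$ under which \emph{extremality} becomes \emph{multiplicativity} of $\varphi$ and \emph{positive-definiteness} becomes \emph{nonnegativity of $\varphi$ on Schur functions}, and then to read off the product form of the resulting generating function.

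First I would use Thoma multiplicativity (Thm.~\ref{theorem:ThomaMultiplicativity}) to encode an extremal character in the numbers $p_n\deq\chi(c_n)$, $n\geq2$, with $p_1\deq1$. By the cycle decomposition $\chi(\sigma)=\prod_n\chi(c_n)^{k_n}$, this says $\chi(\sigma)=\varphi(\powsum{\rho(\sigma)})$, where $\rho(\sigma)$ is the cycle type, $\powsum{\rho}$ the corresponding power-sum symmetric function, and $\varphi\colon\symm\to\Cl$ the \emph{algebra} homomorphism fixed by $\varphi(\powsum{n})=p_n$; thus extremality is exactly the multiplicativity of $\varphi$. On the other hand, positive-definiteness forces $\varphi(\schur{\la})\geq0$ for every partition $\la$: applying $\varphi$ to the expansion $\schur{\la}=\sum_\rho z_\rho^{-1}\chi^\la_\rho\,\powsum{\rho}$ identifies $\varphi(\schur{\la})$ with $\langle\chi|_{S_n},\chi^\la\rangle$, the nonnegative multiplicity of the irreducible $\chi^\la$ in the restriction $\chi|_{S_n}$. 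Conversely, any algebra homomorphism nonnegative on all Schur functions arises this way, so the classification of extremal characters is identified with that of such homomorphisms.

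With this dictionary, the theorem reduces to describing the algebra homomorphisms $\varphi\colon\symm\to\Cl$ with $\varphi(\schur{\la})\geq0$ for all $\la$. Here I would invoke the Edrei--Thoma theorem: every such $\varphi$ has complete-homogeneous generating function $\sum_{n\geq0}\varphi(\comsym{n})\,z^n=e^{\gamma z}\prod_i\frac{1+\beta_i z}{1-\alpha_i z}$ with $\alpha_i\geq\alpha_{i+1}\geq0$, $\beta_i\geq\beta_{i+1}\geq0$, $\gamma\geq0$, and $\sum_i\alpha_i+\sum_i\beta_i+\gamma<\infty$. Comparing with the identity $\sum_{n\geq0}\varphi(\comsym{n})z^n=\exp\!\big(\sum_{k\geq1}\varphi(\powsum{k})z^k/k\big)$ gives $\varphi(\powsum{n})=\sum_i\alpha_i^n+(-1)^{n+1}\sum_i\beta_i^n$ for $n\geq2$, and $\varphi(\powsum{1})=\gamma+\sum_i\alpha_i+\sum_i\beta_i$. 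The normalization $p_1=\chi(e)=1$ then forces $\gamma=1-\sum_i\alpha_i-\sum_i\beta_i\geq0$, which is exactly the simplex constraint $\sum_i\alpha_i+\sum_i\beta_i\leq1$ (the deficit $\gamma$ being the Plancherel/regular part, invisible on $n$-cycles for $n\geq2$). This yields conditions $i)$--$iii)$ defining $\Thoma$ together with formula \eqref{eq:Thoma-Formula}; injectivity is immediate since the parameters are recovered from the $p_n$ via the generating function, and surjectivity is the construction direction of Edrei--Thoma.

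The main obstacle is the Edrei--Thoma classification itself---that nonnegativity on all Schur functions forces the infinite-product form---which is the genuinely analytic core of the argument and is equivalent to the Aissen--Schoenberg--Whitney--Edrei description of totally positive sequences. An alternative, more probabilistic route would bypass it via the Vershik--Kerov ergodic method: extremal characters correspond to ergodic central measures on the path space of the Young graph $\Young$, whose boundary is computed through the asymptotics of normalized irreducible characters along sequences of diagrams $\la^{(N)}$ with $\la^{(N)}_i/N\to\alpha_i$ and $(\la^{(N)})'_i/N\to\beta_i$; there the technical crux shifts to the convergence and ergodicity analysis on $\Young$.
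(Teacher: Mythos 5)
This is a result the paper imports from \cite{Thoma:1964} without proof, so there is no internal argument to measure your attempt against; the comparison can only be with the classical literature. Your outline is the standard modern route (Vershik--Kerov, Okounkov; see \cite{BorodinOlshanski:2017}) rather than Thoma's original 1964 argument, and as an outline it is correct. The dictionary is right: Thoma multiplicativity makes $\varphi\colon\symm\to\Cl$, $\varphi(\powsum{n})=\chi(c_n)$, an algebra homomorphism; your identification $\varphi(\schur{\la})=\langle\chi|_{S_n},\chi^\la\rangle\geq0$ uses that a positive-definite class function on the finite group $S_n$ is a nonnegative combination of irreducible characters, and the converse direction follows from $\powsum{\rho}=\sum_\la\chi^\la_\rho\,\schur{\la}$. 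The bookkeeping from the product form of the generating function to \eqref{eq:Thoma-Formula}, and the appearance of the simplex constraint as $\gamma=1-\sum_i\alpha_i-\sum_i\beta_i\geq0$, is also correct. Two points deserve emphasis. First, the step from ``$\varphi(\schur{\la})\geq0$ for all $\la$'' to ``the sequence $(\varphi(\comsym{n}))_n$ is totally positive'' is not immediate: the minors of the Toeplitz matrix are the images of \emph{skew} Schur functions $\schur{\la/\mu}$, and one must invoke the Littlewood--Richardson rule (or Jacobi--Trudi plus positivity of the coefficients $c^\la_{\mu\nu}$) to conclude these are nonnegative; you should make this explicit. Second, and as you candidly acknowledge, the entire analytic content is delegated to the Edrei/Aissen--Schoenberg--Whitney classification of totally positive sequences, so what you have is a correct and clean \emph{reduction} of Thoma's theorem to Edrei's theorem rather than a self-contained proof. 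Since the paper itself treats the statement as a cited input, this level of rigour is appropriate here, but a referee of a genuine proof would ask you either to prove the Edrei theorem or to carry out the Vershik--Kerov boundary analysis on the Young graph that you sketch as an alternative.
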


We will call the parameters $(\alpha,\beta)\in\Thoma$ the {\em Thoma parameters} of a character. 

\medskip

As a consequence of these results, any $R\in\R_0$ defines a point $(\alpha,\beta)\in\Thoma$. In the following sections, we will be concerned with the problem of identifying the subset of all Thoma parameters of Yang-Baxter characters inside $\Thoma$.

Another important question is how to extract the Thoma parameters $(\alpha,\beta)$ from an involutive R-matrix. In view of \eqref{eq:YB-Character-on-n-cycles} and \eqref{eq:Thoma-Formula}, the parameters $(\alpha,\beta)\in\Thoma$ corresponding to $R\in\R_0$ are uniquely fixed by the system of equations
\begin{align}\label{eq:R-alphabeta}
	\sum_i\alpha_i^n+(-1)^{n+1}\sum_i\beta_i^n
	=
	d^{-n}\,\Tr_{V\tp{n}}(R_1\cdots R_{n-1})\,,\qquad n\geq2.
\end{align}
We will develop tools to compute $(\alpha,\beta)$ directly from $R$ in Sections~\ref{section:subfactors} and~\ref{section:NormalForms}.

\medskip

To conclude this section, let us mention a few simple examples of R-matrices. Clearly, $\pm1=\pm\id_{V\ot V}\in\R_0(V)$. It is well known and easy to check that also $\pm F\in\R_0(V)$ is an involutive solution of \eqref{eq:YBE}, where $F(v\ot w)=w\ot v$ is the tensor flip.  The Thoma parameters of these R-matrices are the following. 

\begin{center}
	\begin{tabular}{|r|l|}
		\hline
		$R$ & non-vanishing Thoma parameters\\
		\hline\hline
		$1$ & $\alpha_1=1$, independent of $d$\\
		\hline
		$-1$ & $\beta_1=1$, independent of $d$\\
		\hline
		$F$ & $\alpha_1=\ldots=\alpha_d=d^{-1}$\\
		\hline
		$-F$ & $\beta_1=\ldots=\beta_d=d^{-1}$\\
		\hline
	\end{tabular}
\end{center}

Since the R-matrices $R=1$ and $R=-1$ obviously give the trivial and alternating representation of $S_\infty$, respectively, the first two lines immediately follow from \eqref{eq:R-alphabeta}. The claimed parameters of $\pm F$ can be verified by computing $\Tr_{V\tp{n}}(F_1\cdots F_{n-1})=d$.

\subsection{Faithfulness}\label{section:faith}

Given an R-matrix $R\in\R_0$ of dimension $d$, the homomorphism $\rho_R$ restricts to a representation $\rho_R^{(n)}$ of $S_n$ on $V\tp{n}$, which has dimension $d^n$. This observation expresses that Yang-Baxter representations are small in comparison to the group algebra $\Cl[S_n]$, and leads to restrictions on the Thoma parameters of Yang-Baxter characters.

\begin{proposition}\label{proposition:faithfulness}
	Let $R\in\R_0$.
	\begin{enumerate}
		\item As a group homomorphism, $\rho_R$ is injective if and only if $R\neq\pm1$.
		\item As an algebra homomorphism, $\rho_R:\Cl[S_\infty]\to\E_0$ is not injective.
	\end{enumerate}
\end{proposition}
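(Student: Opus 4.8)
The plan is to handle the two parts by entirely different means: part $i)$ rests on the classification of normal subgroups of $S_\infty$ together with a short partial-trace computation, whereas part $ii)$ is a pure dimension count.

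For $i)$, I would first dispose of the easy ``only if'' direction. If $R=1$ then every generator $\sigma_i$ is sent to $1\in\E_0$, so $\rho_R$ is the trivial homomorphism; if $R=-1$ then $R_i=-1$ for all $i$ and $\rho_R(\sigma)=\operatorname{sgn}(\sigma)\cdot 1$, whose kernel is the finitary alternating group $A_\infty\neq\{e\}$. In either case $\rho_R$ is non-injective. For the converse, I would study $K=\ker\rho_R$, a normal subgroup of $S_\infty$. By the Schreier--Ulam--Baer classification of normal subgroups of the finitary symmetric group, $K$ is one of $\{e\}$, $A_\infty$, or $S_\infty$. If $K=S_\infty$ then $\rho_R(\sigma_1)=1$, i.e.\ $R=1$. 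If $K=A_\infty$, then all transpositions lie in a single nontrivial coset of $A_\infty$ and are therefore sent to one and the same element of $\E_0$; in particular $R_1=R_2$, which upon restriction to the first three factors reads $R\ot\id_V=\id_V\ot R$ in $\End V\tp{3}$. Taking the partial trace over the first tensor factor gives $d\,R=(\ptr R)\ot\id_V$, and over the third factor gives $d\,R=\id_V\ot Y$ for some $Y\in\End V$; hence $R=X\ot\id_V=\id_V\ot Y$ with $X,Y\in\End V$, which forces $X$ and $Y$ to be equal scalars, so $R=\pm\id_{V\ot V}$ by unitarity and $R^2=1$. Thus both $K=A_\infty$ and $K=S_\infty$ already entail $R=\pm1$, so if $R\neq\pm1$ we must have $K=\{e\}$ and $\rho_R$ is injective.

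For $ii)$, I would restrict $\rho_R$ to the finite-dimensional subalgebra $\Cl[S_n]\subset\Cl[S_\infty]$. Its image lies inside $\End V\tp{n}\ot 1\ot\cdots\subset\E_0$, a space of complex dimension $(d^n)^2=d^{2n}$, whereas $\dim_\Cl\Cl[S_n]=n!$. Since $n!/d^{2n}\to\infty$ as $n\to\infty$ for fixed $d$, there is some $n$ with $n!>d^{2n}$; for such $n$ the linear map $\rho_R|_{\Cl[S_n]}$ cannot be injective, and any nonzero kernel element is a nonzero element of $\ker\rho_R\subset\Cl[S_\infty]$. Hence $\rho_R$ is not injective as an algebra homomorphism.

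The main obstacle is the intermediate case $K=A_\infty$ in part $i)$: it must be excluded for $R\neq\pm1$, and the cleanest route is the reduction $R_1=R_2\Rightarrow R=\pm1$ via partial traces sketched above. Everything else is routine, part $ii)$ being an elementary comparison of dimensions.
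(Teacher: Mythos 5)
Your proof is correct and follows essentially the same route as the paper: the classification of normal subgroups (of $S_n$ for $n\geq 5$ in the paper, of $S_\infty$ via Schreier--Ulam--Baer in your version) for part $i)$, and the comparison of $n!$ with $d^{2n}$ for part $ii)$. The one place you go beyond the paper is in spelling out why the image being $\Zl_2$ forces $R=\pm1$: your partial-trace argument showing $R_1=R_2\Rightarrow R=\pm\id_{V\ot V}$ is correct and fills a step the paper leaves implicit.
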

\begin{proof}
	$i)$ This is a general property of $S_\infty$. Clearly, if $R=\pm1$ then $\rho_R$ is not injective. Conversely, assume that $\rho_R$ is not injective and $\sigma\in S_\infty$ lies in the kernel, then $\sigma$ also lies in the kernel of $\rho_R|_{S_n}$ for $n$ sufficiently large. But for $n\geq5$, the only non-trivial proper normal subgroup of $S_n$ is the alternating group $A_n$. Thus $\ker \rho_R|_{S_n}$ contains at least $A_n$. This implies that the image of $\rho_R$ is either trivial or $\Zl_2$. In the case at hand, this means that $\rho_R$ is injective if and only if $R\neq\pm1$.

	$ii)$ $\rho_R$ restricts to an algebra homomorphism $\rho_R^{(n)}:\Cl[S_n]\to\End V\tp{n}$. As the dimensions of $\Cl[S_n]$ and \(\End(V^{\otimes n})\) are $n!$ and \(d^{2n}\), respectively, and $n!>d^{2n}$ for $n$ sufficiently large, it follows that $\rho_R^{(n)}$ cannot be injective. 
\end{proof}

The second part of this proposition implies that Yang-Baxter characters are never faithful. This observation allows us to make use of the following theorem due to Wassermann \cite[Thm.~III.6.5]{Wassermann:1981}.

\begin{theorem}{\bf \cite{Wassermann:1981}}\label{theorem:wassermann}
	Let $\chi$ be an extremal character of $S_\infty$ with Thoma parameters $(\alpha,\beta)\in\Thoma$. Then $\chi$ is faithful as a state of the group $C^*$-algebra $C^*S_\infty$ if and only if either $\sum_i\alpha_i+\sum_i\beta_i<1$, or $\sum_i\alpha_i+\sum_i\beta_i=1$ and infinitely many $\alpha_i$ or $\beta_i$ are non-zero.
\end{theorem}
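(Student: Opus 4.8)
The plan is to exploit the fact that $S_\infty=\bigcup_n S_n$ is locally finite, hence amenable, so that the full and reduced group $C^*$-algebras coincide, $C^*S_\infty=C^*_rS_\infty$. I would first reduce faithfulness of the state $\chi$ to a statement about its GNS representation $(\pi_\chi,\xi_\chi)$: since $\chi$ is a trace, the cyclic vector $\xi_\chi$ is separating for the von Neumann algebra $\pi_\chi(S_\infty)''$, so that $\chi(a^*a)=\|\pi_\chi(a)\xi_\chi\|^2=0$ already forces $\pi_\chi(a)=0$. Thus $\chi$ is faithful on $C^*S_\infty$ if and only if $\pi_\chi$ is injective.

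Next I would use the AF structure of $C^*_rS_\infty$. Each $\Cl[S_n]\cong\bigoplus_{\lambda\vdash n}M_{\dim\lambda}$ is a finite-dimensional $C^*$-algebra, the inclusions $\Cl[S_n]\hookrightarrow\Cl[S_{n+1}]$ are governed by the branching rule, and $C^*_rS_\infty=\overline{\bigcup_n\Cl[S_n]}$ is AF with Bratteli diagram the Young lattice. Since any closed ideal $I$ of an AF algebra satisfies $I=\overline{\bigcup_n(I\cap A_n)}$, the map $\pi_\chi$ is injective if and only if it is injective on every $\Cl[S_n]$. On the block $M_{\dim\lambda}$ the trace acts as a multiple $t_\lambda\,\Tr$, and $\pi_\chi$ annihilates this block precisely when $t_\lambda=0$. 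Hence faithfulness is equivalent to the purely representation-theoretic condition that every irreducible character $\chi^\lambda$, $\lambda\vdash n$, occurs with strictly positive multiplicity in the restriction $\chi|_{S_n}$, for all $n$.

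The core of the argument is then to translate this positivity into the Thoma parameters. The multiplicity of $\chi^\lambda$ in $\chi|_{S_n}$ is a positive multiple of the value $\varphi_\chi(\lambda)$ of the extreme harmonic function on the Young lattice attached to $\chi$, and by the Vershik--Kerov description this value is the (super-, or hook-) Schur function $s_\lambda(\alpha_1,\alpha_2,\ldots\,;\,\beta_1,\beta_2,\ldots)$ evaluated at the Thoma parameters, the Plancherel part $\gamma=1-\sum_i\alpha_i-\sum_j\beta_j$ entering as though it were infinitely many infinitesimal variables. I would then invoke the combinatorial formula for $s_\lambda$ as a sum over semistandard super-tableaux: at strictly positive arguments it is strictly positive exactly when $\lambda$ fits into the $(k,l)$-hook determined by the numbers $k,l$ of nonzero $\alpha_i,\beta_j$ (enlarged to an unbounded hook whenever $\gamma>0$), and it vanishes otherwise. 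Consequently every $\lambda$ receives positive weight if and only if either $\gamma>0$ or infinitely many parameters are nonzero, which is precisely the stated dichotomy; if only finitely many parameters are nonzero and $\sum_i\alpha_i+\sum_j\beta_j=1$, then any $\lambda$ outside the finite hook (for instance a sufficiently large square) has $\varphi_\chi(\lambda)=0$, and $\chi$ fails to be faithful.

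The reductions via amenability, the AF structure, and the separating-vector argument are routine. The genuine obstacle is the last step: establishing that the extreme harmonic functions are given by the super-Schur specialization and pinning down its exact vanishing locus (the $(k,l)$-hook condition), with the correct treatment of the Plancherel part $\gamma>0$. This amounts to reproving the hard combinatorial core of Thoma's classification, and is where I would expect to spend essentially all of the effort.
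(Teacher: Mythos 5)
This theorem is imported verbatim from Wassermann's thesis (the paper cites \cite{Wassermann:1981} and gives no proof), so there is no internal argument to compare yours against; I can only assess your outline on its merits, and it is the standard proof, with all reductions sound. Amenability of the locally finite group $S_\infty$ gives $C^*S_\infty=C^*_rS_\infty$; the trace property makes the GNS vector separating, so faithfulness of $\chi$ as a state is equivalent to injectivity of $\pi_\chi$; and Bratteli's description of closed ideals in AF algebras reduces this to the non-vanishing, for every $n$ and every $\lambda\vdash n$, of the weight of the block $M_{\dim\lambda}\subset\Cl[S_n]$, i.e.\ of the multiplicity of $\lambda$ in $\chi|_{S_n}$. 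Two remarks on the step you flag as the genuine obstacle. First, you overstate it: you do not need to reprove Thoma's classification. Taking Thoma's formula (Thm.~\ref{theorem:ThomaParameters}) as given, the weight of the block $\lambda$ is $\langle\lambda,\chi|_{S_n}\rangle$, which by the expansion of $\schur{\lambda}$ in power sums is exactly the image of $\schur{\lambda}$ under the specialization $\powsum{1}\mapsto1$, $\powsum{k}\mapsto\sum_i\alpha_i^k+(-1)^{k+1}\sum_j\beta_j^k$ for $k\geq2$; identifying this with the hook (super-)Schur evaluation is the computation $\left[(1\otimes\omega)\circ\Delta(\schur{\lambda})\right](\alpha,\beta)$ that the paper itself carries out in Thm.~\ref{thm:ptr_and_K} and Prop.~\ref{Proposition:Multiplicities} (where the rectangle/hook vanishing criterion is also recorded, with pointers back to Wassermann and to Doplicher--Haag--Roberts), and the positivity statement for semistandard super-tableaux is classical. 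Second, the one place needing genuine care is $\gamma\deq1-\sum_i\alpha_i-\sum_j\beta_j>0$: there the specialization is not a substitution of variables, since $\powsum{1}$ sees $\gamma$ while $\powsum{k}$, $k\geq2$, does not; the clean way out is to note that the resulting weight is a sum of nonnegative terms including the strictly positive pure-Plancherel contribution $\gamma^n\dim\lambda/n!$, so every block survives. With these two points settled, your dichotomy follows exactly as stated: if $\gamma=0$ and only $k$ of the $\alpha_i$ and $l$ of the $\beta_j$ are nonzero, any $\lambda$ containing the $(k+1)\times(l+1)$ rectangle is annihilated, whereas if $\gamma>0$ or infinitely many parameters are nonzero every weight is strictly positive.
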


In combination with Prop.~\ref{proposition:faithfulness}~$ii)$, this immediately implies the following result.

\begin{corollary}\label{corollary:YB->gamma+finite}
     Let $(\alpha,\beta)\in\Thoma$ be the Thoma parameters of a Yang-Baxter character $\chi_R$. Then $\sum_i\alpha_i+\sum_i\beta_i=1$, and only finitely many $\alpha_i$ or $\beta_i$ are non-zero.
\end{corollary}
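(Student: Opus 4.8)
The plan is to combine the non-faithfulness of $\chi_R$ established in Prop.~\ref{proposition:faithfulness}~$ii)$ with Wassermann's dichotomy in Thm.~\ref{theorem:wassermann}, read in its contrapositive form, and then to intersect the resulting inequality with Thoma's normalization constraint $iii)$ of Thm.~\ref{theorem:ThomaParameters}. Since the corollary is stated to follow immediately from these two inputs, the work is almost entirely logical bookkeeping; the one point deserving attention is the translation between the two different notions of ``faithfulness'' appearing in the cited results.

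First I would convert the statement ``$\rho_R$ is not injective as an algebra homomorphism'' into ``$\chi_R$ is not faithful as a state on $C^*S_\infty$'', which is the hypothesis Wassermann's theorem actually requires. By Prop.~\ref{proposition:faithfulness}~$ii)$ there is some $a\in\Cl[S_\infty]$ with $a\neq0$ but $\rho_R(a)=0$. Because $\rho_R$ is a ${}^*$-homomorphism and $\chi_R=\tau\circ\rho_R$, we obtain
\[
  \chi_R(a^*a)=\tau\!\left(\rho_R(a)^*\rho_R(a)\right)=\tau(0)=0,
\]
while $a$ remains nonzero inside $C^*S_\infty$, since the group algebra embeds in the group $C^*$-algebra. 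Thus the positive sesquilinear form $a\mapsto\chi_R(a^*a)$ is degenerate, i.e.\ $\chi_R$ is not faithful as a state on $C^*S_\infty$.

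Next I would apply Thm.~\ref{theorem:wassermann} contrapositively: since $\chi_R$ is not faithful, neither of Wassermann's two sufficient conditions can hold. Failure of the first, $\sum_i\alpha_i+\sum_i\beta_i<1$, gives $\sum_i\alpha_i+\sum_i\beta_i\geq1$, and combining this with the Thoma constraint $\sum_i\alpha_i+\sum_j\beta_j\leq1$ forces $\sum_i\alpha_i+\sum_i\beta_i=1$. Failure of the second condition then asserts exactly that it is not the case that $\sum=1$ together with infinitely many nonzero parameters; having already established $\sum=1$, this leaves only the possibility that finitely many $\alpha_i$, $\beta_j$ are nonzero. Both conclusions of the corollary follow at once.

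The only step carrying any content is the first one, the passage from non-injectivity of the algebra homomorphism to non-faithfulness of the associated state, and even this is a one-line consequence of $\chi_R=\tau\circ\rho_R$ and the ${}^*$-homomorphism property. Accordingly I do not expect a genuine obstacle: the corollary is essentially a logical consequence of Prop.~\ref{proposition:faithfulness}~$ii)$ and Thm.~\ref{theorem:wassermann}, sharpened by Thoma's inequality.
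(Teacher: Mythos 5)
Your proposal is correct and follows exactly the route the paper takes: the paper derives the corollary as an immediate combination of Prop.~\ref{proposition:faithfulness}~$ii)$ and Thm.~\ref{theorem:wassermann}, and your explicit verification that non-injectivity of $\rho_R$ yields non-faithfulness of the state $\chi_R$ (via $\chi_R(a^*a)=\tau(\rho_R(a)^*\rho_R(a))=0$ for a nonzero $a$ in the kernel) is precisely the bridging step the paper leaves implicit. The contrapositive reading of Wassermann's dichotomy together with Thoma's inequality $\sum_i\alpha_i+\sum_j\beta_j\leq1$ is handled correctly.
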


We can now give a first example of an extremal non-Yang-Baxter character, namely the Plancherel trace $\chi(\sigma)=\delta_{\sigma,e}$. By \eqref{eq:Thoma-Formula}, the Plancherel trace has Thoma parameters $\alpha=\beta=0$ and therefore violates the condition $\sum_i\alpha_i+\sum_i\beta_i=1$. Its GNS representation is the left regular representation, which is too large to be of Yang-Baxter form.

\section{Yang-Baxter subfactors}\label{section:subfactors}

The Thoma parameters of a Yang-Baxter character have further properties, in addition to the ones spelled out in Cor.~\ref{corollary:YB->gamma+finite}. To extract these properties, and to derive a characterization of the equivalence relation $\sim$, we now switch to a setting involving von Neumann algebras. Specifically, we will consider subfactors \cite{Jones:1983_2,JonesSunder:1997} arising from the subgroup
\begin{align}\label {eq:subgroups}
     S_\infty^>\subset S_\infty\,,\qquad S_\infty^>\deq \{\sigma\in S_\infty\,:\,\sigma(1)=1\}.
\end{align}
Given an extremal character $\chi$ of $S_\infty$, we may view it as a tracial state on the group $C^*$-algebra $C^*S_\infty$ (we denote the state and the character by the same symbol). 

The GNS data of $(C^*S_\infty,\chi)$ will be denoted $(\Hil_\chi,\Om_\chi,\pi_\chi)$, and the von Neumann algebra generated by the representation $\M_\chi\deq \pi_\chi(C^*S_\infty)''$. Since $\chi$ is extremal, $\M_\chi$ is a (finite) factor --- it is trivial for the one-dimensional trivial and alternating representations, and hyperfinite of type II$_1$ in all other cases.

In our situation of Yang-Baxter representations, we have the homomorphism $\rho_R:\Cl[S_\infty]\to\E_0=\bigcup_n\End V\tp{n}$. Proceeding to the GNS representation $\pi_\tau$ of~$\E_0$ with respect to the trace $\tau$, we may weakly close $\E_0$ to $\E$ (a hyperfinite II$_1$ factor), and obtain the subfactor 
\begin{align}
	\M_R\deq \rho_R(\Cl[S_\infty])''\subset\E\,.
\end{align}
Since $\pi_\tau$ is faithful (in contrast to $\rho_R$ and $\pi_{\chi_R}$, see Prop.~\ref{proposition:faithfulness}), we suppress it in our notation and often write $\rho_R$ instead of $\pi_\tau\circ\rho_R$. We can canonically identify $\pi_\tau\circ\rho_R=\pi_{\chi_R}$, $\Om_\tau=\Om_{\chi_R}$, $\M_R=\M_{\chi_R}$, $\Hil_{\chi_R}=\overline{\M_R\Om_\tau}$. 

As an aside, let us mention that our equivalence relation $R\sim S$ implies the unitary equivalence of the representations 
\begin{align}\label{eq:GNS-equivalence}
	R\sim S\Longrightarrow \pi_\tau\circ\rho_R\cong\pi_\tau\circ\rho_S\,.
\end{align}
In fact, $R\sim S$ implies $\chi_R=\chi_S$ and hence $\pi_{\chi_R}=\pi_{\chi_S}$ --- since $\pi_{\chi_R}$ can be identified with the restriction of  $\pi_\tau\circ\rho_R$ to $\Hil_{\chi_R}$, \eqref{eq:GNS-equivalence} follows.

\bigskip

The subgroup \eqref{eq:subgroups} generates the von Neumann algebra 
\begin{align}\label{eq:NM-subfactor}
	\N_R\deq \rho_R(\Cl[S_\infty^>])''\subset\M_R\,.
\end{align}
As $S_\infty^>\cong S_\infty$, this is a (I$_1$ or II$_1$) subfactor. 

Gohm and K\"ostler \cite{GohmKostler:2010_2} and Yamashita \cite{Yamashita:2012} have independently analyzed the subfactor $\N_\chi\subset\M_\chi$ in the setting of general (not necessarily Yang-Baxter) extremal characters. They found that it is irreducible if and only if the parameters $(\alpha,\beta)$ have one of the following values:
\begin{enumerate}
	\item $\alpha_1=\ldots=\alpha_d=d^{-1}$ for some $d\in\Nl$,
	\item $\beta_1=\ldots=\beta_d=d^{-1}$ for some $d\in\Nl$,
	\item $\alpha_i=0$ and $\beta_i=0$ for all $i$.
\end{enumerate}
By comparison with our examples of R-matrices at the end of the preceding section, we see that the relative commutant $\N_R'\cap\M_R$ is trivial if and only if $R$ is equivalent to one of the four R-matrices $1,-1,F,-F$, of arbitrary dimension $d\in\Nl$. As we pointed out earlier, the last possibility iii) is realized by the Plancherel trace, which is not Yang-Baxter.

\medskip

To extract information about $R$ from the subfactor \eqref{eq:NM-subfactor}, we consider the unique $\tau$-preserving conditional expectation onto its relative commutant, 
\begin{align}
	E_R:\M_R\to\N_R'\cap\M_R\,.
\end{align}
The inclusion $\N_R\subset\M_R$ is replicated on the level of the infinite tensor product~$\E$: Here we consider the inclusion $\Cl\ot\End V\ot\End V\ot\cdots\subset\E$, the relative commutant of which is $\End V$, viewed as a subalgebra of $\E$ via the embedding $X\mapsto X\ot1\ot1\ot\cdots$. The corresponding $\tau$-preserving conditional expectation is the partial trace
\begin{align}\label{eq:E}
	E:\E\to\End V\,,\qquad E=\id_{\End V}\ot\,\tau\ot\tau\ot\cdots\;.
\end{align}
In the following arguments, we will also need the map
\begin{align}\label{eq:tau1}
	    \tau_1:\E\to\E,\qquad 
        \tau_1(a\ot b\ot c\ot\ldots)&:=\tau(a)\,b\ot c\ot\ldots,
\end{align}
and the canonical shift $s:\E\to\E$, $s(x)=\id_V\ot x$. Clearly $\tau_1\circ s=\id_\E$.

Another important element are the so-called partial shifts, defined as
\begin{align}\label{eq:gamma-m-def}
	\gamma_m(M)=\lim_{n\to\infty}R_{m+1}R_{m+2}\cdots R_n\cdot M\cdot R_n\cdots R_{m+2}R_{m+1}\,,\qquad m\in\Nl_0\,.
\end{align}
These limits exist in the strong operator topology for any $M\in\M_R$ \cite[Prop.~2.13]{GohmKostler:2010_2}, and define $\tau$-preserving endomorphisms of $\M_R$. We recall the well-known fact that on $\M_R$, the endomorphism $\gamma_0$ coincides with the shift $s$. Indeed, a straightforward calculation based on the Yang-Baxter equation shows 
\begin{align*}
 \gamma_0(\rho_R(\sigma_k))=\rho_R(\sigma_{k+1})=\id_V\ot\rho_R(\sigma_k)=s(\rho_R(\sigma_k)),\qquad k\in\Nl.
\end{align*}

Our following considerations will imply that the diagram
\begin{equation}
\begin{tikzcd}
	\End V \arrow[leftarrow, r, "E"] \arrow[hookleftarrow]{d} & \E \arrow[hookleftarrow]{d}
	\\
	\N_R'\cap\M_R \arrow[leftarrow, r, "E_R"] & \M_R
\end{tikzcd}
\label{diagram:commutingsquare}
\end{equation}
is a commuting square. To begin with, we show that we have the inclusion $\N_R'\cap\M_R\subset\End V$ on the left hand side.

\begin{proposition}\label{prop:relcomm} $\N_R'\cap\M_R\subset\End V\cong \End V\ot\Cl\ot\Cl\ldots\subset\E$.
\end{proposition}
\begin{proof}
    We define a linear map $\Gamma:\E\to\E$ by $\Gamma(X):=\tau_1(R_1XR_1)$. For any element $X\in\E_0$ in the algebraic infinite tensor product (only finitely many non-trivial tensor factors), there exists $N\in\Nl$ such that $\Gamma^n(X)\in\End V$ for all $n\geq N$. To treat general $X\in\E$, we note that $\Gamma$ satisfies 
    \begin{align*}
     \|\Gamma(X)\|\leq\|X\|,\qquad \|\Gamma(X)\|_2\leq\|X\|_2,\qquad X\in\E,
    \end{align*}
    where $\|X\|_2^2=\tau(X^*X)$ is the 2-norm defined by $\tau$. The first bound implies that the sequence $(\Gamma^n)_{n\in\Nl}$ has pointwise weak limit points $\hat\Gamma:\E\to\E$, and the second bound implies that also the limit point maps $\hat\Gamma$ satisfy $\|\hat\Gamma(X)\|_2\leq\|X\|_2$, $X\in\E$. Any such limit point $\hat\Gamma$ satisfies $\hat\Gamma(\E_0)\subset\End V$, and since $\E_0\subset\E$ is dense in 2-norm, we conclude $\hat\Gamma(\E)\subset\End V$.
    
    Now let $M\in\N_R'\cap\M_R$. Since $M\in\M_R$, we have $s(M)=\gamma_0(M)$, and since $M\in\N_R'$ commutes with $R_k$, $k\geq2$, we find
    \begin{align*}
     s(M)
     =
     \gamma_0(M)
     &= 
     \lim_{n\to\infty}R_1\cdots R_nMR_n\cdots R_1
     = R_1MR_1,
    \end{align*}
    and therefore $M=\tau_1(s(M))=\Gamma(M)$. We conclude $M=\hat\Gamma(M)\in\End V$.
\end{proof}

The key step of our argument is to show that $E_R$ and $E$ agree on $R_1=\rho_R(\sigma_1)\in\M_R$. As in \cite{Yamashita:2012}, we consider the subgroups $T_n=\{\sigma\in S_{n+1}\,:\,\sigma(1)=1\}\subset S_\infty$ and the von Neumann algebras generated by them, $\N_{R,n}\deq \rho_R(T_n)''\subset\M_R$. As $T_n\subset T_{n+1}$, this yields a descending chain of relative commutants, $n\in\Nl$,
\begin{align*}
	\M_R\supset (\N_{R,n}'\cap\M_R)\supset(\N_{R,n+1}'\cap\M_R)\supset(\N_R'\cap\M_R)\,,
\end{align*}
with corresponding conditional expectations $E_{R,n}:\M_R\to\N_{R,n}'\cap\M_R$. Since $T_n$ is finite, $E_{R,n}$ is simply given by averaging, 
\begin{align}
	E_{R,n}(M)=\frac{1}{n!}\sum_{\sigma\in T_n}\rho_R(\sigma)M\rho_R(\sigma^{-1}),
	\qquad M\in\M_R\,.
\end{align}
It is not hard to compute that for $M=R_1$, one gets \cite{Yamashita:2012}
\begin{align}
	E_{R,n}(R_1)=\frac{1}{n}\sum_{j=2}^{n+1}\rho_R(\sigma_{1,j})\,.
\end{align}

\begin{lemma}\label{lemma:ER=E}
 $E_R(R_1)=E(R_1)$.
\end{lemma}
\begin{proof}
 By definition of $E$ and $E_R$, we have $E(R_1)\in\End V$ and $E_R(R_1)\in\N_R'\cap\M_R$, respectively. But according to Prop.~\ref{prop:relcomm}, $\N_R'\cap\M_R\subset\End V$, so $E_R(R_1)\in\End V$ as well. It is therefore sufficient to show $\tau(XE_R(R_1))=\tau(XE(R_1))$ for all $X\in\End V$.
 
 By the definition of the right partial trace $E$,
 \begin{align*}
  \tau(XE(R_1))=\tau(XR_1).
 \end{align*}
 To calculate $\tau(XE_R(R_1))$, we use the fact that $E_{R,n}\to E_R$ as $n\to\infty$ in the $2$-norm given by $\tau$. This implies 
 \begin{align*}
    \tau(XE_R(R_1))&=
    \lim_{n\to\infty}\tau(XE_{R,n}(R_1))
    =
    \lim_{n\to\infty}\frac{1}{n}\sum_{j=2}^{n+1}\tau(X\rho_R(\sigma_{1,j})).
 \end{align*}
As $\sigma_{1,j}=\sigma_{j-1}\cdots \sigma_2\sigma_1\sigma_2\cdots \sigma_{j-1}$ \eqref{eq:2cyclesingenerators} and $X\in\End V$ commutes with $R_k$ for $k>1$, this simplifies to
\begin{align*}
 \tau(XE_R(R_1))
 &=
 \lim_{n\to\infty}\frac{1}{n}\sum_{j=2}^{n+1}\tau(XR_{j-1}\cdots R_2R_1R_2\cdots R_{j-1}) 
 =
 \tau(XR_1).
\end{align*}
 The proof is finished.
\end{proof}

With $E(R_1)=E_R(R_1)$, we now have concrete elements of the relative commutant $\N_R'\cap\M_R$ at our disposal. For $R=\pm1$ or $R=\pm F$, these partial traces are trivial, $E(\pm1)=\pm d\id_V$, $E(\pm F)=\pm\id_V$, as can be computed directly or inferred from the previously quoted result on irreducibility of $\N_R\subset\M_R$. 

However, for all R-matrices not equivalent to $\pm1,\pm F$, we get non-trivial partial traces $E(R_1)$. In fact, it was shown in \cite{GohmKostler:2011,Yamashita:2012} that $E_R(R_1)$ generates the relative commutant $\N_R'\cap\M_R$. This implies that for $R\not\sim\pm1,\pm F$, the expectation $E(R_1)$ is not a multiple of the identity.

\medskip

The partial trace $E(R_1)$ of the R-matrix turns out to be a complete invariant for the equivalence relation $\sim$. This is a consequence of the next theorem, which follows from the work of Gohm and K\"ostler, and our  Lemma~\ref{lemma:ER=E}. These authors prove it in a setting of noncommutative probability \cite{GohmKostler:2010_2}, building on their earlier work \cite{GohmKostler:2009_2,Kostler:2010_3} (see also \cite{GohmKostler:2011}). In our situation, only certain aspects of \cite{GohmKostler:2009_2,GohmKostler:2010_2,Kostler:2010_3,GohmKostler:2011} are needed, and we give a shortened proof for the sake of self-containedness. 

This proof makes use of the partial shifts $\gamma_m$ \eqref{eq:gamma-m-def}. By explicit calculation based on \eqref{eq:2cyclesingenerators} and the Yang-Baxter equation, one shows that \cite[Prop.~3.3]{GohmKostler:2010_2}
\begin{align}\label{eq:gamma-m}
     \gamma_m(\rho_R(\sigma_{1,n}))=\begin{cases}
                                     \rho_R(\sigma_{1,n}) & n<m+1\\
                                     \rho_R(\sigma_{1,n+1}) &n\geq m+1
                                    \end{cases}
     .
\end{align}
As $R_1=\rho_R(\sigma_{1,2})$, this implies in particular 
\begin{align}\label{eq:gamma-1}
     \gamma_1^p(R_1)&=\rho_R(\sigma_{1,p+2})\,,\qquad p\in\Nl\,.
\end{align}
It follows immediately from the definition \eqref{eq:gamma-m-def} of $\gamma_m$, $m\geq1$, that the relative commutant $\N_R'\cap\M_R$ is contained in the fixed point algebra $\M_R^{\gamma_m}$, and consequently the conditional expectation $E_R$ is invariant, $E_R\circ\gamma_m=E_R$. Gohm and K\"ostler proved that for $m=1$, equality holds: $\M_R^{\gamma_1}=\N_R'\cap\M_R$ \cite[Thm.~3.6~(iii)]{GohmKostler:2010_2}.

\begin{proposition}\label{proposition:GK-theorem}
	Let $c_n\in S_\infty$ be an $n$-cycle, $n\geq2$. Then
	\begin{align}\label{eq:ChiViaPartialTraces}
		\chi_R(c_n)=\tau(E(R_1)^{n-1}).
	\end{align}
\end{proposition}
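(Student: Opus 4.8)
The plan is to prove the formula $\chi_R(c_n) = \tau(E_l(R_1)^{n-1})$ by exploiting the interplay between the partial shift $\gamma_1$, the cycle representation \eqref{eq:cycle-reps}, and the fact (from Lemma~\ref{lemma:ER=El}) that $E_l(R_1) = E_R(R_1)$ lies in the relative commutant $\N_R'\cap\M_R$. The starting point is the explicit $n$-cycle $c_n = \rho_R(\sigma_{1,2}\sigma_{1,3}\cdots\sigma_{1,n})$, so that $\chi_R(c_n) = \tau(R_1\,\rho_R(\sigma_{1,3})\cdots\rho_R(\sigma_{1,n}))$. Using \eqref{eq:gamma-1}, each factor $\rho_R(\sigma_{1,p+2})$ equals $\gamma_1^p(R_1)$, so the product inside the trace can be rewritten as $R_1\cdot\gamma_1(R_1)\cdot\gamma_1^2(R_1)\cdots\gamma_1^{n-2}(R_1)$. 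Hence
\begin{align*}
	\chi_R(c_n) = \tau\Big(\prod_{p=0}^{n-2}\gamma_1^p(R_1)\Big).
\end{align*}

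The key idea is then to peel off one factor at a time using the properties of $E_R$ and $\gamma_1$. I would first apply the conditional expectation: since $\tau = \tau\circ E_R$ (as $E_R$ is $\tau$-preserving) and since $E_l(R_1)=E_R(R_1)$ is $\gamma_1$-invariant and central with respect to $\N_R$, I want to show inductively that the expectation of the full product collapses to $\tau(E_R(R_1)^{n-1})$. Concretely, the relation $E_R\circ\gamma_m = E_R$ (noted just before \eqref{eq:gamma-m}, following from $\gamma_m$ acting trivially on the relative commutant) together with the module property of the conditional expectation should let me move the leftmost factor $R_1$ onto $E_R$ while the remaining shifted factors, being further out in the tower, interact correctly. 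The cleanest route is probably to establish that $E_R(R_1 \cdot \gamma_1(M)) = E_R(R_1)\cdot E_R(M)$ for suitable $M$, or equivalently that the family $\{\gamma_1^p(R_1)\}$ behaves like a sequence of conditionally independent, identically distributed elements under $\tau$ — which is precisely the noncommutative-de-Finetti content of the Gohm–Köstler framework. Iterating such a factorization $n-1$ times yields $\tau(E_R(R_1)^{n-1}) = \tau(E_l(R_1)^{n-1})$ after substituting Lemma~\ref{lemma:ER=El}.

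The main obstacle I anticipate is justifying the factorization step rigorously: one must show that when the conditional expectation $E_R$ is applied, the shifted copies $\gamma_1^p(R_1)$ decouple into independent factors, each contributing a copy of $E_R(R_1)$. This is where the commuting-square structure \eqref{diagram:commutingsquare} and the identification $\M_R^{\gamma_1} = \N_R'\cap\M_R$ become essential — the invariance $E_R\circ\gamma_1 = E_R$ guarantees that $E_R$ sees each $\gamma_1^p(R_1)$ as the same element $E_R(R_1)$ of the relative commutant, and the traciality of $\tau$ lets me reorganize the product. A careful induction, using the tower of algebras generated by $R_1,\gamma_1(R_1),\ldots$ and the fact that $\gamma_1$ shifts the "location" of $R_1$ further away (so that the newly shifted factor is conditionally independent of the previously accumulated ones given the relative commutant), should close the argument. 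I would structure the induction on $n$, with base case $n=2$ being the tautology $\chi_R(c_2) = \tau(R_1) = \tau(E_l(R_1))$, since $\tau = \tau\circ E_R$ and $E_R(R_1)=E_l(R_1)$.
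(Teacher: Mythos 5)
Your setup is the same as the paper's: you write $\rho_R(c_n)=R_1\,\gamma_1(R_1)\cdots\gamma_1^{n-2}(R_1)$ via \eqref{eq:cycle-reps} and \eqref{eq:gamma-1}, and you correctly reduce everything to a factorization of the form $E_R\bigl(X\cdot\gamma_1(Y)\bigr)=E_R(X)\,E_R(Y)$ (equivalently, $E_R(C_{n+1})=E_R(C_n)\,E_R(R_1)$). But that factorization is precisely the nontrivial content of the proposition, and you do not prove it: it does \emph{not} follow formally from $E_R\circ\gamma_1=E_R$ (which only controls $E_R(\gamma_1(Y))$, with no $X$ in front), nor from the commuting square or the bimodule property alone. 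What you are implicitly invoking is the full conditional independence of $\M_R$ and $\gamma_1(\M_R)$ over $\N_R'\cap\M_R$, i.e.\ the Gohm--K\"ostler noncommutative de Finetti theorem, as a black box. Since the point of the paper's proof is to be self-contained on exactly this step, flagging it as "the main obstacle" and saying a careful induction "should close the argument" leaves the essential gap open.

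The device the paper uses to close it is worth recording, because it is the one idea missing from your sketch. Since $\gamma_m$ fixes $\rho_R(\sigma_{1,k})$ for $k\leq m$ and shifts $\sigma_{1,k}\mapsto\sigma_{1,k+1}$ for $k\geq m+1$, and since $E_R\circ\gamma_m=E_R$, one has $E_R(C_{n+1})=E_R\bigl(C_n\,\rho_R(\sigma_{1,n+p})\bigr)=E_R\bigl(C_n\,\gamma_1^{n+p-2}(R_1)\bigr)$ for \emph{every} $p\in\Nl$: the last transposition can be pushed arbitrarily far away while $C_n$ stays put. Averaging over $p$ and applying the mean ergodic theorem for $\gamma_1$ (whose fixed-point algebra is $\N_R'\cap\M_R$, by Gohm--K\"ostler) replaces $\frac{1}{N}\sum_{p=1}^N\gamma_1^p(\gamma_1^{n-2}(R_1))$ by $E_R(\gamma_1^{n-2}(R_1))=E_R(R_1)=E_l(R_1)$ in the strong topology; only \emph{then} does the bimodule property of $E_R$ over its range give $E_R(C_{n+1})=E_R(C_n)\,E_l(R_1)$, and the induction closes. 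Without this averaging-plus-ergodic-theorem step (or an explicit citation of the Gohm--K\"ostler independence theorem in place of it), your argument does not go through.
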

\begin{proof}
	For $n=2$, the statement is a direct consequence of the definition of $E$. For the induction step, we consider the specific cycle $c_{n+1}=c_n\sigma_{1,n+1}$ \eqref{eq:cycle-reps}. Writing $C_n=\rho_R(c_n)$ as a shorthand, we note that $\gamma_n(C_n)=C_n$ for (see \eqref{eq:cycle-reps} and \eqref{eq:gamma-m}). As $E_R$ is invariant under $\gamma_n$, we obtain
	\begin{align*}
		E_R(C_{n+1})
		=
		E_R(\gamma_n(C_n\rho_R(\sigma_{1,n+1}))
		&=
		E_R(C_n\cdot\rho_R(\sigma_{1,n+2}))
		.
	\end{align*}
	In the same manner, we can now insert the endomorphism $\gamma_{n+1}$, which also leaves $C_n$ invariant, and maps $\rho_R(\sigma_{1,n+2})$ to $\rho_R(\sigma_{1,n+3})$. By iteration and \eqref{eq:gamma-1}, this gives $E_R(C_{n+1})=E_R(C_n\,\rho_R(\sigma_{1,n+p}))=E_R(C_n\,\gamma_1^{n+p-2}(R_1))$, $p\in\Nl$.
	
	Averaging over $p$ yields for any $N\in\Nl$
	\begin{align*}
		E_R(C_{n+1})
		&=
		E_R\left(C_n\cdot \frac{1}{N}\sum_{p=1}^N\gamma_1^p(\gamma_1^{n-2}(R_1))\right)
		.
	\end{align*}
	We may now use the ergodic theorem \cite{Petersen:1983}, stating here that for any $M\in\M_R$, the ergodic averages $N^{-1}\sum_{p=1}^N\gamma_1^p(M)$ converge strongly to the conditional expectation $E_R(M)$ onto the fixed point algebra $\M_R^{\gamma_1}=\N_R'\cap\M_R$ as $N\to\infty$ \cite[Thm.8.3]{Kostler:2010_3}. As $\gamma_1^{n-2}(R_1)\in\M_R$, and $E_R$ is continuous in the strong operator topology, we have  $E_R(C_{n+1})=E_R(C_n\cdot E_R(\gamma_1^{n-2}(R_1)))=E_R(C_n)\cdot E_R(\gamma_1^{n-2}(R_1))$. In view of the $\gamma_1$-invariance of $E_R$ and Lemma~\ref{lemma:ER=E}, the last term simplifies to $E_R(\gamma_1^{n-2}(R_1))=E_R(R_1)=E(R_1)$. 
	
	We thus have shown $E_R(C_{n+1})=E_R(C_n)\cdot E(R_1)$, which implies $E_R(C_n)=E(R_1)^{n-1}$ by induction. Evaluating in $\tau$ then gives the claimed result.
\end{proof}

These results can be used to show that \eqref{diagram:commutingsquare} is indeed a commuting square, but we will not need this fact in the following.

We have now extracted sufficient information from the subfactor setting, and return to our analysis of equivalence of R-matrices. At this point, it is better to switch to the usual partial trace of $R$, defined as
\begin{align}\label{eq:partialtracedef}
	\ptr R=\dim V\cdot E(R_1)=(\id_{\End V}\ot\Tr_V)(R)
\end{align}
and viewed as an element of $\End V$ rather than $\E$. The key relation \eqref{eq:ChiViaPartialTraces} can then be rewritten as
\begin{align}\label{eq:keyrelation-ptr}
	\chi_R(c_n)
	=
	d^{-n}\,\Tr_V(\ptr(R)^{n-1})\,.
\end{align}
With this information, we obtain the proof of Thm.~\ref{theorem:PartialTraces-Introduction} from the Introduction.

\begin{theorem}\label{theorem:PartialTraces}
	Two R-matrices $R,S\in\R_0$ are equivalent if and only if they have similar partial traces, $\ptr R\cong \ptr S$.
\end{theorem}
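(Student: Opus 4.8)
The plan is to reduce everything to the moment identity \eqref{eq:keyrelation-ptr} proved in Prop.~\ref{proposition:GK-theorem}, combined with the characterization of $\sim$ by equality of dimension and character. Write $P_R\deq\ptr R\in\End V$. Since $R\in\R_0(V)$ is unitary and involutive, $R^*=R^{-1}=R$ is self-adjoint, and because the partial trace is a positive, hence $*$-preserving, map, $P_R=P_R^*$ is a self-adjoint operator on the $d$-dimensional space $V$. The point of self-adjointness is that the sequence of power sums $\Tr_V(P_R^m)$ will pin $P_R$ down up to conjugacy, not merely up to its spectrum-with-algebraic-multiplicities. Relation \eqref{eq:keyrelation-ptr} says precisely that each character value $\chi_R(c_n)$, $n\geq2$, equals $d^{-n}$ times the power sum $\Tr_V(P_R^{\,n-1})$.

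For the forward direction I would assume $R\sim S$. By the discussion following Prop.~\ref{proposition:chiR-extremal}, this means that $R$ and $S$ share a common dimension $d$ and that $\chi_R=\chi_S$; in particular $\chi_R(c_n)=\chi_S(c_n)$ for all $n\geq2$. Substituting into \eqref{eq:keyrelation-ptr} gives $\Tr_V(P_R^{\,m})=\Tr_V(P_S^{\,m})$ for every $m\geq1$, while equality of dimensions supplies the matching $m=0$ moment $\Tr_V(P_R^{0})=d=\Tr_V(P_S^{0})$. Since $P_R$ and $P_S$ are self-adjoint, their full sequences of power sums determine the respective multisets of eigenvalues --- for instance via Newton's identities, which recover the characteristic polynomial from $p_1,\ldots,p_d$, the eigenvalues being real with algebraic equal to geometric multiplicity. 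Two self-adjoint operators with the same eigenvalues and multiplicities are unitarily equivalent, so $\ptr R\cong\ptr S$.

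For the reverse direction I would assume $\ptr R\cong\ptr S$. Similar operators act on spaces of equal dimension, so $d_R=d_S=d$, and the trace of any power is a similarity invariant, whence $\Tr_V(P_R^{\,m})=\Tr_V(P_S^{\,m})$ for all $m\geq0$. By \eqref{eq:keyrelation-ptr} this yields $\chi_R(c_n)=\chi_S(c_n)$ for all $n\geq2$. Both $\chi_R$ and $\chi_S$ are extremal (Prop.~\ref{proposition:chiR-extremal}), hence determined by their values on $n$-cycles through Thoma multiplicativity (Thm.~\ref{theorem:ThomaMultiplicativity}); therefore $\chi_R=\chi_S$, and together with $d_R=d_S$ this is exactly the statement $R\sim S$.

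The substantive content has already been bought by Prop.~\ref{proposition:GK-theorem}: once the partial-trace moment formula \eqref{eq:keyrelation-ptr} is in hand, what remains is an elementary moment argument, and the only points needing a little care are the self-adjointness of $\ptr R$ and the passage from agreement on $n$-cycles to agreement of the full characters (Thoma multiplicativity). I therefore do not expect a genuine obstacle inside this theorem; the real difficulty lies upstream, in the subfactor and ergodic-averaging analysis of Lemma~\ref{lemma:ER=El} and Prop.~\ref{proposition:GK-theorem} that establishes \eqref{eq:keyrelation-ptr}.
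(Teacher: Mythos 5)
Your proposal is correct and follows essentially the same route as the paper: both directions hinge on the moment identity \eqref{eq:keyrelation-ptr}, with Thoma multiplicativity handling the passage from cycle values to full characters in one direction, and self-adjointness of $\ptr R$ plus the power-sum/characteristic-polynomial argument giving similarity in the other. The only difference is that you spell out the Newton's-identities step and the $m=0$ moment explicitly, which the paper leaves implicit.
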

\begin{proof}
	If $R\in\R_0(V)$ and $S\in\R_0(W)$ have similar partial traces, then clearly  $\Tr_V(\ptr(R)^{n-1})=\Tr_W(\ptr(S)^{n-1})$. As similarity of the partial traces implies in particular that the dimensions $\dim V=\dim W$ coincide, we conclude $\chi_R=\chi_S$ from \eqref{eq:keyrelation-ptr} and Thoma multiplicativity. Thus, $\ptr(R)\cong\ptr(S)\Rightarrow R\sim S$.
	
	Conversely, if $R\sim S$, then these R-matrices have the same dimension and character, and hence $\Tr_V(\ptr(R)^{n-1})=\Tr_W(\ptr(S)^{n-1})$, $n\geq2$, from \eqref{eq:keyrelation-ptr}. This implies that the selfadjoint endomorphisms $\ptr(R)$, $\ptr(S)$ have the same characteristic polynomial, and are therefore similar.
\end{proof}

Elements $R\in\End(V\ot V)$ have two partial traces, the right one introduced above, and the left partial trace $\ptr'(R):=(\Tr_V\ot\id_{\End V})(R)$. We note that for R-matrices, these two partial traces coincide:
\begin{align}\label{eq:ptr=ptr'}
 \ptr(R)=\ptr'(R),\qquad R\in\R_0.
\end{align}
To prove this claim, we observe $\ptr'(R)=\ptr(FRF)$ with $F$ the flip, and recall $FRF\sim R$. Then Thm.~\ref{theorem:PartialTraces} implies the similarity $\ptr(R)\cong\ptr(FRF)=\ptr'(R)$ and in particular $\Tr(\ptr'(R)^2)=\Tr(\ptr(R)^2)$. Taking also into account $\Tr_V(\ptr(R)^2)=\Tr(R_1R_2)$ \eqref{eq:keyrelation-ptr} and $\Tr_V(\ptr(R)\ptr'(R))=\Tr(R_1R_2)$, we have 
\begin{align*}
 \Tr_V(|\ptr(R)-\ptr'(R)|^2)
 &=
 2\left(\Tr_V(\ptr(R)^2)-\Tr_V(\ptr(R)\ptr'(R))\right)
%
 =0,
\end{align*}
which proves \eqref{eq:ptr=ptr'}. We may therefore use left and right partial traces of R-matrices interchangeably in the following.

\medskip

Thm.~\ref{theorem:PartialTraces} shows that the eigenvalues of $\ptr(R)$ (and their multiplicities) characterize the equivalence classes $\Rquot$. Such spectral characterizations also appear in the work of Okounkov on Thoma measures and Olshanski pairs \cite{Okounkov:1997}. In our Yang-Baxter setting, the spectrum of $\ptr(R)$ has a very specific form, which will be the key to our classification of R-matrices in the next section.

\bigskip

As a second important consequence of Prop.~\ref{proposition:GK-theorem}, next we demonstrate that Yang-Baxter characters have {\em rational} Thoma parameters after stating a preparatory lemma.

\begin{lemma}\leavevmode
	\begin{enumerate}
		\item Let $\{x_i\}_i$ and $\{y_j\}_j$ be two finite sequences of positive real numbers such that for all $n\in\Nl$,
		\begin{align}\label{eq:rationality-source}
			\sum_ix_i^{2n+1}=\sum_j y_j^n\,.
		\end{align}
		Then the $x_i,y_j$ are rational. 
		\item Let $\{x_i\}_i$ be a finite sequence of positive rational numbers such that $\sum_ix_i^{2n+1}\in\Nl$ for all $n\in\Nl$. Then the $x_i$ are natural numbers.
	\end{enumerate}
\end{lemma}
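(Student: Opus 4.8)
The plan is to treat both parts through the generating function (resolvent) attached to the odd power sums, which converts each hypothesis into a statement about a single rational function of one variable. The key device is that packaging $x_i^{2n+1}$ into a power series in $t$ makes $x_i^2$, rather than $x_i$, appear as the pole data, so that a one-variable rationality argument becomes available even though only \emph{odd} power sums are known.

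For part i) I would first rewrite \eqref{eq:rationality-source} as an identity of generating functions. Multiplying by $t^n$ and summing over $n\geq1$ gives, as convergent power series near $t=0$,
\[
    \sum_i \frac{x_i^3\,t}{1-x_i^2\,t} \;=\; \sum_j \frac{y_j\,t}{1-y_j\,t}.
\]
Since both sides are rational functions agreeing as power series, they agree as rational functions, and I would compare their partial fraction decompositions. The left side has simple poles exactly at $t=1/x_i^2$ and the right side at $t=1/y_j$, all residues being nonzero because the $x_i,y_j$ are positive; hence the poles are genuine and the sets of distinct values $\{x_i^2\}$ and $\{y_j\}$ must coincide. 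Writing $w_1>\cdots>w_N>0$ for this common set, with $x$-multiplicity $m_k$ and $y$-multiplicity $n_k$, matching residues at $t=1/w_k$ yields $m_k\sqrt{w_k}=n_k$, so $\sqrt{w_k}=n_k/m_k\in\Ql$. Thus every $x_i=\sqrt{w_k}$ and every $y_j=w_k=(n_k/m_k)^2$ is rational.

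For part ii) the same resolvent is central, now with arithmetic input. The series $f(t)=\sum_{n\geq1}p_{2n+1}\,t^n$, with $p_{2n+1}=\sum_i x_i^{2n+1}$, has integer coefficients by hypothesis, and equals $\sum_i x_i^3 t/(1-x_i^2 t)$, whose poles are simple and located exactly at $t=1/w_k$ for the distinct values $w_k$ of $x_i^2$. I would then invoke the classical theorem (going back to Fatou) that a power series with coefficients in $\Zl$ representing a rational function can be written as $A/B$ with $A,B\in\Zl[t]$ and $B(0)=1$; here the minimal denominator forces $B(t)=\prod_k(1-w_k t)\in\Zl[t]$. Passing to the reciprocal polynomial $\prod_k(s-w_k)\in\Zl[s]$, which is monic, shows each $w_k=x_i^2$ is an algebraic integer; being rational, each $x_i^2$ is an integer, and a rational number whose square is an integer is itself an integer, so $x_i\in\Nl$.

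The main obstacle is part ii). A naive approach through a single $p$-adic valuation fails: if some $x_i$ had negative valuation at a prime $p$, reducing the minimal-valuation terms modulo $p$ only gives $\sum_{i\in S}\bar{u}_i^{2n+1}\equiv0$ (with $S$ indexing the terms of minimal valuation and $\bar{u}_i$ the reductions of their unit parts), and this can hold identically, for instance at $p=2$, through cancellation among the leading units, so no contradiction follows from one congruence. What rescues the argument is precisely that the Fatou statement exploits the integrality of \emph{all} coefficients at once via the rationality of $f$. The only other point requiring care, in part i), is the passage from the formal/analytic identity of the two series to equality of rational functions together with the uniqueness of partial fractions, which is exactly where positivity of the $x_i,y_j$ (ensuring nonvanishing residues, hence genuine poles) is used.
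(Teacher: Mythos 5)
Your proof is correct, but it takes a genuinely different route from the paper's in both halves, most substantially in part ii). For part i) the paper orders the sequences, divides \eqref{eq:rationality-source} by $x_1^{2n+1}$ and lets $n\to\infty$ to identify the top value $y_1=x_1^2$ together with the multiplicities $\mu,\nu$, concluding $x_1=\nu/\mu$, and then strips these terms off and inducts; your partial-fraction comparison of $\sum_i x_i^3t/(1-x_i^2t)$ with $\sum_j y_jt/(1-y_jt)$ extracts all the identities $m_k\sqrt{w_k}=n_k$ at once. Both arguments use positivity in the same essential way (non-cancellation of the dominant terms there, nonvanishing residues here) and land on the same formula $\sqrt{w_k}=n_k/m_k$, so this half is largely a repackaging. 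Part ii) is where you truly diverge. The paper first proves the \emph{even}-power statement ($\sum_i x_i^{2n}\in\Nl$ for all $n$ forces $x_i\in\Nl$) by an elementary but delicate argument: clear denominators up to a single power of a prime $p$ and apply the Euler--Fermat theorem with exponent $\varphi(p^{m+1})=p^m(p-1)$, so that every unit term is congruent to $1$ modulo $p^{m+1}$ and the number $N$ of such terms must be divisible by $p^m$ for every $m$, hence $N=0$; it then reduces odd powers to even powers by duplicating each $x_i$ with multiplicity proportional to $x_i$. This is precisely the refinement that defeats the mod-$p$ cancellation obstruction you correctly identify: working modulo $p^{m+1}$ rather than $p$ leaves no room for the leading units to cancel. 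Your route instead imports Fatou's lemma on integer power series representing rational functions, which yields $\prod_k(1-w_kt)\in\Zl[t]$ and hence that each $w_k=x_i^2$ is a rational algebraic integer in one stroke, bypassing the odd-to-even reduction entirely. What you buy is brevity and a conceptual explanation (the integrality of \emph{all} coefficients is exploited simultaneously through rationality of the resolvent); what the paper buys is self-containedness, needing nothing beyond Euler--Fermat.
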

\begin{proof}
	$i)$ We order the sequences $\{x_i\}_i$, $\{y_j\}_i$ non-increasingly and define $\mu\in\Nl$ as the multiplicity of the maximal value of the first sequence, i.e. $x_1=\ldots=x_\mu>x_{\mu+1}$. Dividing \eqref{eq:rationality-source} by $x_1^{2n+1}$ yields
	\begin{align*}
		\sum_i\left(\frac{x_i}{x_1}\right)^{2n+1}=\frac{1}{x_1}\sum_j \left(\frac{y_j}{x_1^2}\right)^n\,.
	\end{align*}
	In the limit $n\to\infty$, the left hand side converges to $\mu$. In this limit, the right hand side goes to infinity if $y_1>x_1^2$ and to $0$ if $y_1<x_1^2$. As $0<\mu<\infty$, we conclude that $y_1=x_1^2$, and define $\nu\in\Nl$ as its multiplicity, $y_1=\ldots=y_\nu>y_{\nu+1}$. Then the right hand side has the limit $\frac{\nu}{x_1}$ as $n\to\infty$, so that $x_1=\frac{\nu}{\mu}$ and $y_1=x_1^2$ are rational.
	
	Inserting these values of $x_1$ and $y_1$ into \eqref{eq:rationality-source}, we find
	\begin{align*}
		\mu \left(\frac{\nu}{\mu}\right)^{2n+1}+\sum_{i>\mu}x_i^{2n+1}
		=
		\nu \left(\frac{\nu}{\mu}\right)^{2n}+\sum_{j>\nu}y_j^n\,,
	\end{align*}
	and hence \eqref{eq:rationality-source} also holds for the shorter sequences $\{x_i\}_{i>\mu}$ and $\{y_j\}_{j>\nu}$. The claim now follows by induction.
	
	$ii)$ We first show that $\sum_i x_i^{2n}\in\Nl$, 
        that is, with {\em even} powers 
        implies\footnote{GL gratefully acknowledges a helpful
          discussion with A.~Schweizer on this point.} $x_i\in\Nl$. Assume
        that not all $x_i$ are integers. Then there exist $c,h_i\in\Nl$ and a
        prime $p$ such that $c\,x_i=h_i/p$ and not all $h_i$ are divisible
        by~$p$, that is, we clear all but at most a single
          factor of $p$ in the denominators of the $x_i$. 
        Clearly, it is sufficient to consider only those \(h_i\) which are not divisible by \(p\). Let \(\{g_i\}_i\) be the subsequence of \(\{h_i\}_i\) of elements not divisible by \(p\) and let its length be \(N\).        The assumption, $\sum_i x_i^{2n}\in\Nl$, then implies that $\sum_i g_i^{2n}=0\mod p^{2n}$ for every even $n\in\Nl$.
        
        For
        arbitrary $m\in\Nl$, consider Euler's totient function $\varphi(p^{m+1})=p^m(p-1)$ evaluated at $p^{m+1}$. Then, since $\varphi(p^{m+1})$ is even, 
        $S_m:=\sum_i g_i^{p^m(p-1)}= 0$ mod $p^{p^m(p-1)}$, and in particular, since $p^m\ge m$, $S_m= 0 \mod p^{m}$. However,
     since $p$ does not divide $g_i$,     the numbers $p^{m+1}$ and $g_i$ are coprime, and it follows from the Euler-Fermat theorem that $g_i^{p^m(p-1)}=1$ mod $p^{m+1}$. Thus $S_m=\sum_i  g_i^{p^m(p-1)}=N \mod p^{m+1}$, where we recall that $N$ denotes the number of terms in the sum. As 
$S_m=0$ mod $p^{m}$, it follows that $p^m$ divides $N$. Further, 
as $m$ was arbitrary, this implies $N=0$, that is, all $h_i$ are divisible by $p$ contradicting the initial assumption.

     We now deduce the claimed statement for {\em odd} powers. Let $x_i=\frac{u_i}{v_i}$ with $u_i,v_i\in\Nl$, and $\tilde{u}_i:=x_i\cdot v_1\cdots v_N\in\Nl$, and define a rational sequence $y_1=\ldots=y_{\tilde u_1}=x_1$, $y_{\tilde u_1+1}=\ldots=y_{\tilde u_1+\tilde u_2}=x_2$, etc. Then $\sum_j y_j^n=\sum_i \tilde u_ix_i^{n}=v_1\cdots v_n\sum_ix_i^{n+1}$, and by assumption, $\sum_ix_i^{n+1}\in\Nl$ for every {\em even} $n$, which in turn implies that
     $\sum_j y_j^n\in\Nl$ for all even $n$. 
     Thus all the \(y_j\) and \(x_i\) are natural numbers.
\end{proof}

\begin{definition}\label{definition:TYB}
	$\ThomaYB \subset\Thoma$ is defined as the subset of all $(\alpha,\beta)\in\Thoma$ satisfying:
	\begin{enumerate}
		\item Only finitely many parameters $\alpha_i,\beta_j$ are non-zero.
		\item $\sum_i\alpha_i+\sum_j\beta_j=1$.
		\item\label{definition:rational-parameters} All $\alpha_i,\beta_j$ are rational.
	\end{enumerate}
\end{definition}

We can now give one half of the proof of Thm.~\ref{theorem:ThomaYB-Introduction} from the Introduction.

\begin{theorem}\label{theorem:YB-parameters}
  The Thoma parameters $\alpha_i,\beta_i$ of any Yang-Baxter character $\chi_R$ lie in~$\ThomaYB$. If~$R$ has dimension $d$, then $d\alpha_i,d\beta_i\in\Nl$.
\end{theorem}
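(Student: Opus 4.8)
The plan is to combine the three ingredients we already have: Corollary~\ref{corollary:YB->gamma+finite} (finiteness and the normalization $\sum_i\alpha_i+\sum_j\beta_j=1$), the key formula \eqref{eq:keyrelation-ptr} expressing character values on cycles through the partial trace, and the two-part Lemma on rationality/integrality of power-sum sequences. The finiteness condition $(i)$ and the sum condition $(ii)$ of Definition~\ref{definition:TYB} are already established in Cor.~\ref{corollary:YB->gamma+finite}, so the real work is condition $(iii)$, rationality, together with the sharper integrality claim $d\alpha_i,d\beta_i\in\Nl$.

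First I would connect the Thoma parameters to the eigenvalues of the partial trace. By Prop.~\ref{proposition:GK-theorem} in the form \eqref{eq:keyrelation-ptr}, we have $\chi_R(c_n)=d^{-n}\Tr_V(\ptr(R)^{n-1})$, and comparing with Thoma's formula \eqref{eq:Thoma-Formula} gives, for all $n\geq 2$,
\begin{align*}
	\sum_i\alpha_i^n+(-1)^{n+1}\sum_j\beta_j^n
	=
	d^{-n}\sum_k \mu_k^{n-1},
\end{align*}
where $\mu_k$ are the eigenvalues of $\ptr(R)$ (with multiplicity). The strategy is to extract the $\alpha_i$ and $\beta_j$ as the positive and negative eigenvalues of $\ptr(R)$ suitably rescaled: writing $x_i\deq d\,\alpha_i$ for the positive part and expecting $\mu_k/d$ to range over $\{\alpha_i\}\cup\{-\beta_j\}$. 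This identification is what lets me feed a power-sum identity into the Lemma.

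Next I would arrange the power-sum data to match the hypotheses of the Lemma. Evaluating the displayed identity at odd and even $n$ separately isolates the $\alpha$'s from the $\beta$'s: the even-$n$ equations give $\sum_i\alpha_i^n=\sum_j\beta_j^n + d^{-n}(\text{something})$ while the odd equations combine them additively. The cleanest route is to apply part $(i)$ of the Lemma with $\{x_i\}=\{\alpha_i\}$ and $\{y_j\}=\{\alpha_i^2\}\cup\ldots$ arranged so that $\sum_i x_i^{2n+1}=\sum_j y_j^n$ holds, which forces all $x_i$ and $y_j$ to be rational; this yields condition $(iii)$. For the integrality refinement, I would set $x_i\deq d\,\alpha_i$ and use \eqref{eq:keyrelation-ptr} to see that the relevant rescaled power sums are traces of integer-entry-like quantities, so $\sum_i x_i^{2n+1}\in\Nl$, and then invoke part $(ii)$ of the Lemma to conclude $d\alpha_i\in\Nl$; the identical argument applied to the negative eigenvalues gives $d\beta_j\in\Nl$.

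The main obstacle I anticipate is the bookkeeping in the second paragraph: separating the contribution of the $\alpha$'s and $\beta$'s from a single sequence of partial-trace eigenvalues $\mu_k$, and verifying that the rescaled power sums $\sum_i(d\alpha_i)^{2n+1}$ are genuinely \emph{integers} rather than merely rationals, so that part $(ii)$ of the Lemma applies. Establishing that integrality will likely require rewriting $d^{n}\chi_R(c_n)=\Tr_V(\ptr(R)^{n-1})$ and recognizing the right-hand side (equivalently, the numerators appearing after clearing $d^n$) as a trace of an endomorphism with integer spectral data; the sign alternation $(-1)^{n+1}$ in Thoma's formula, which distinguishes the $\beta$-part, is exactly what makes the odd-power substitution in the Lemma natural, but matching it carefully to the eigenvalue signs of $\ptr(R)$ is the delicate step.
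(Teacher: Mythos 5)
Your overall strategy is the paper's: conditions $i)$ and $ii)$ come from Cor.~\ref{corollary:YB->gamma+finite}, and rationality plus integrality come from feeding the odd-$n$ specialization of \eqref{eq:keyrelation-ptr} into the two parts of the preparatory lemma. However, the way you propose to instantiate the lemma has a genuine gap. Your plan to ``extract the $\alpha_i$ and $\beta_j$ as the positive and negative eigenvalues of $\ptr(R)$ suitably rescaled'' is circular at this stage: that identification is a \emph{consequence} of the theorem (it is proved only later, in Thm.~\ref{theorem:Young-Parameterization}~$ii)$, by a different method), and nothing proved so far lets you match eigenvalue signs to $\alpha$'s versus $\beta$'s. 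Correspondingly, your proposed substitution $\{x_i\}=\{\alpha_i\}$ with $\{y_j\}=\{\alpha_i^2\}\cup\ldots$ does not produce an identity of the form $\sum_i x_i^{2n+1}=\sum_j y_j^n$: the odd-$n$ equations do not isolate the $\alpha$'s at all. The point of restricting to odd $n=2m+1$ is precisely that $(-1)^{n+1}=+1$ turns the alternating sum into a plain sum, so that one takes $\{x_i\}=\{d\alpha_i\}\cup\{d\beta_j\}$ \emph{together} and $y_j=t_j^2$ with $t_j$ the non-zero eigenvalues of the selfadjoint operator $\ptr R$; then $\sum_i x_i^{2m+1}=\sum_j y_j^m$ holds exactly and part $i)$ of the lemma gives rationality of all $\alpha_i,\beta_j$ at once, with no separation needed.

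The second gap is the integrality input for part $ii)$ of the lemma. Your justification --- ``traces of integer-entry-like quantities'' or ``an endomorphism with integer spectral data'' --- is not available: $R$ need not have integer entries in any basis, and the integrality of the spectrum of $\ptr R$ is again downstream of the theorem. The correct observation is that
\begin{align*}
  \sum_i\bigl((d\alpha_i)^{2n+1}+(d\beta_i)^{2n+1}\bigr)
  = d^{2n+1}\chi_R(c_{2n+1})
  = \Tr_{V\tp{(2n+1)}}\bigl(\rho_R^{(2n+1)}(c_{2n+1})\bigr)
\end{align*}
is the value of the non-normalized character of the finite-dimensional $S_{2n+1}$-representation $\rho_R^{(2n+1)}$ at a group element, and characters of symmetric groups are integer-valued (all irreducible representations of $S_n$ are defined over $\Zl$). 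With that in hand, part $ii)$ of the lemma applied to the combined sequence $\{d\alpha_i\}\cup\{d\beta_j\}$ (already known to be rational) yields $d\alpha_i,d\beta_i\in\Nl$ simultaneously; there is no separate argument ``for the negative eigenvalues.'' You correctly flagged both of these as the delicate points, but the proof as proposed does not close them.
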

\begin{proof}
  Due to Cor.~\ref{corollary:YB->gamma+finite}, the only property of Definition \ref{definition:TYB} that remains to be shown is $\ref{definition:rational-parameters})$, the rationality of the parameters. To do so, we express the character on the left hand side of \eqref{eq:keyrelation-ptr} in terms of its Thoma parameters $(\alpha,\beta)$ \eqref{eq:Thoma-Formula}, and the traces on the right hand side of \eqref{eq:keyrelation-ptr} in terms of the non-zero eigenvalues $t_j$ of $\ptr R$ (note that $\ptr R$ is selfadjoint, so the $t_j$ are real). This yields, $n\ge2$,
	\begin{align}\label{eq:abt}
		\sum_i\alpha_i^n+(-1)^{n+1}\sum_i\beta_i^n
		=
		d^{-n}\sum_jt_j^{n-1}\,.
	\end{align}
	Specializing to the case that $n=2m+1$ is odd, we are in the situation of the preceding lemma with $\{x_i\}_i=\{d\alpha_i,d\beta_i\}_i$ and $y_j=t_j^2$, and conclude that the $\alpha_i,\beta_i$ are rational.
	
	To also show $d\alpha_i,d\beta_i\in\Nl$, note that $\sum_i((d\alpha_i)^{2n+1}+(d\beta_i)^{2n+1})	=\Tr_{V\tp{n}}(R_1\cdots R_{2n})$ are values of a non-normalized character of $S_{2n+1}$, and therefore integers. Thus $d\alpha_i,d\beta_i\in\Nl$ follows by application of the second statement of the preceding lemma.
\end{proof}

With a little more work, one can use \eqref{eq:abt} to show that the eigenvalues of $\ptr(R)$ are non-zero integers such that the positive eigenvalues coincide with the rescaled parameters $d\alpha_i$ and the negative eigenvalues coincide with the $-d\beta_i$, up to multiplicities. We will prove these facts by a different method in the next section.

\section{The structure of \texorpdfstring{\boldmath{$\Rquot$}}{R0}}

\subsection{Normal forms of involutive R-matrices}\label{section:NormalForms}

Our next aim is to prove that $\ThomaYB$ parameterizes the set of all Yang-Baxter characters, that is, that every $(\alpha,\beta)\in\ThomaYB$ is realized as the Thoma parameters of some R-matrix.

We will follow a procedure which has some analogy to building general group representations (of, say, a finite group) as direct sums of irreducibles. Yang-Baxter representations are reducible, but decomposing them gives representations which are no longer of Yang-Baxter form. Conversely, taking direct sums of Yang-Baxter representations is not compatible with the Yang-Baxter equation either.

To get around these problems, we introduce a binary operation $\boxplus$ on R-matrices that on the level of the base spaces corresponds to taking direct sums, and respects the Yang-Baxter equation. Under various names, such operations have been considered in the literature before  \cite{Lyubashenko:1987,Gurevich:1991,Hietarinta:1993_3}. We present here the version that is most useful for the case at hand.

\begin{definition}
     Let $V,W$ be finite dimensional Hilbert spaces and let $X\in\End(V\ot V)$, $Y\in\End(W\ot W)$. We define $X\boxplus Y\in\End((V\oplus W)\ot(V\oplus W))$ as
	\begin{align}\label{eq:DefBoxplus}
	       X\boxplus Y &= X\oplus Y\oplus F\quad\text{on}\\
	       (V\oplus W)\ot(V\oplus W) &= (V\ot V)\oplus(W\ot W)\oplus((V\ot W)\oplus(W\ot V)).\nonumber
	\end{align}	
\end{definition}

In other words, $X\boxplus Y$ acts as $X$ on $V\ot V$, as $Y$ on $W\ot W$, and as the flip on the mixed tensors involving factors from both, $V$ and $W$. Note that the above definition works in the same way for infinite dimensional Hilbert spaces.

\bigskip

Before applying this operation to R-matrices, we collect its main properties. In particular, we note that $\boxplus$ behaves well under taking the partial trace 
\begin{align}
   \ptr:\End(U\ot U)&\to\End U\,,\\
   X&\mapsto
   (\id_{\End U}\ot\Tr_U)(X),
\end{align}
where $U$ is any finite dimensional vector space.

\begin{lemma}\label{lemma:boxplus}
     Let $V,W$ be finite dimensional Hilbert spaces and $X\in\End(V\ot V)$, $Y\in\End(W\ot W)$.     
     \begin{enumerate}
	  \item $\boxplus$ is commutative and associative up to canonical isomorphism.
	  \item If $X$ and $Y$ are unitary (respectively selfadjoint, involutive, invertible), then $X\boxplus Y$ is unitary (respectively selfadjoint, involutive, invertible).
	  \item If $X$ commutes with the flip (on $V\ot V$) and $Y$ commutes with the flip (on $W\ot W$), then $X\boxplus Y$ commutes with the flip (on $(V\oplus W)\ot(V\oplus W)$).
	  \item $\ptr(X\boxplus Y)=(\ptr X)\oplus(\ptr Y)$. In particular, $\Tr(X\boxplus Y)=\Tr X+\Tr Y$. The same formula holds for the right partial trace.
     \end{enumerate}
\end{lemma}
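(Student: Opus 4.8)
The plan is to verify each of the four listed properties of $\boxplus$ directly from the block-decomposition in Definition~\ref{eq:DefBoxplus}, since all of them ultimately reduce to the fact that $X\boxplus Y$ is block-diagonal with respect to the splitting $(V\ot V)\oplus(W\ot W)\oplus((V\ot W)\oplus(W\ot V))$, with the mixed block being the flip. The key observation throughout is that this decomposition is functorial: reordering the summands $V\oplus W$ induces the same canonical reorganization of the four tensor summands, and the flip is symmetric under interchanging $V\ot W$ with $W\ot V$.

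First I would treat $i)$ (commutativity and associativity). For commutativity, I would exhibit the canonical isomorphism $V\oplus W\cong W\oplus V$ and check that it intertwines $X\boxplus Y$ with $Y\boxplus X$; the only content is that the flip on $(V\ot W)\oplus(W\ot V)$ is carried to the flip on $(W\ot V)\oplus(V\ot W)$, which is immediate since the flip is its own ``transpose'' under swapping the two mixed summands. Associativity is checked analogously by comparing $(X\boxplus Y)\boxplus Z$ and $X\boxplus(Y\boxplus Z)$ under the associativity isomorphism for $U\oplus V\oplus W$; on each of the nine tensor summands both sides act either as one of $X,Y,Z$ (on the three ``pure'' summands) or as the flip (on the six mixed summands), so they agree.

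Next I would dispatch $ii)$ and $iii)$, which are the easiest. A block-diagonal operator is unitary / selfadjoint / involutive / invertible if and only if each block is, and the flip $F$ is simultaneously unitary, selfadjoint, and involutive; hence if $X,Y$ have any of these properties, so does $X\boxplus Y$. For $iii)$, commuting with the flip is again a blockwise condition: $X$ commutes with $F_{V\ot V}$ and $Y$ with $F_{W\ot W}$ by hypothesis, and on the mixed part $F$ commutes with itself, so $X\boxplus Y$ commutes with the total flip $F_{(V\oplus W)\ot(V\oplus W)}$. This last point uses that the total flip itself respects the block decomposition, permuting $V\ot W$ with $W\ot V$ and fixing the pure blocks.

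The only step with genuine (though still routine) content is $iv)$, and this is where I expect the main obstacle, such as it is, to lie --- namely in verifying that the partial trace of the off-diagonal flip contributes nothing. The plan is to compute $\ptr$ blockwise. Writing the partial trace as $\Tr_{\text{first factor}}\ot\id$, I would note that for a vector $v\oplus w\in V\oplus W$ the first tensor factor ranges over an orthonormal basis of $V\oplus W$ obtained by concatenating bases of $V$ and $W$. Tracing out this first factor, the $X$-block returns $\ptr X\in\End V$, the $Y$-block returns $\ptr Y\in\End W$, and the crucial claim is that the flip on the mixed summands contributes zero to $\End(V\oplus W)=\End V\oplus\End W\oplus\operatorname{Hom}(W,V)\oplus\operatorname{Hom}(V,W)$. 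This follows because the partial trace of the flip $F\in\End(U\ot U')$, for $U\neq U'$ appearing as orthogonal subspaces, lands in the off-diagonal $\operatorname{Hom}$ components, and in fact $\Tr_U(F)$ on mixed basis vectors vanishes whenever the two tensor factors come from orthogonal summands --- the diagonal matrix elements $\langle e_i\ot f_j, F(e_k\ot f_l)\rangle$ summed over the traced index force $V$- and $W$-indices to coincide, which is impossible. Hence $\ptr(X\boxplus Y)=(\ptr X)\oplus(\ptr Y)$ as a diagonal element of $\End(V\oplus W)$, and taking the full trace gives $\Tr(X\boxplus Y)=\Tr X+\Tr Y$. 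The identical argument with the roles of the two tensor factors reversed yields the same statement for the right partial trace.
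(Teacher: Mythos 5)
Your proposal is correct and follows essentially the same route as the paper: parts $i)$--$iii)$ are handled by the same blockwise/symmetry observations, and for $iv)$ your key computation --- that the matrix elements $\langle w\ot v_1, F(w\ot v_2)\rangle$ arising in the partial trace of the flip over the mixed summands vanish because $V\perp W$ --- is exactly the paper's argument that $\ptr(FQ)=0$ for $Q$ the projection onto $(V\ot W)\oplus(W\ot V)$.
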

\begin{proof}
     $i)$ The definition \eqref{eq:DefBoxplus} is invariant under exchanging $(X,V)$ with $(Y,W)$, that is $X\boxplus Y=Y\boxplus X$. Associativity follows by repeatedly evaluating the definition. Given finite dimensional vectors spaces $V^1,\ldots,V^n$ and $X^i\in\End(V^i\ot V^i)$, $i=1,\ldots,n$, one finds
     \begin{align}\label{eq:Boxplus-Iterated}
	  \bigboxplus_{i=1}^nX^i
	  &=
	  X^1\oplus\ldots\oplus X^n\oplus F\,,
     \end{align}
     where on the right hand side, each $X^i$ acts on $V^i\ot V^i$, and $F$ on the orthogonal complement of $\bigoplus_i (V^i\ot V^i)$ in $(\bigoplus_i V^i)\tp{2}$.

     $ii)$, $iii)$ These statements follow directly from the facts that $F$ is unitary, selfadjoint, involutive, invertible, and the flip of $(V\oplus W)\ot(V\oplus W)$ leaves the three subspaces in the decomposition \eqref{eq:DefBoxplus} invariant.
     
     $iv)$ Proving the claimed formula amounts to showing that the partial trace of $FQ$ vanishes, where $Q$ is the orthogonal projection onto $(V\ot W)\oplus(W\ot V)$. Let $v_1,v_2\in V$, and let $\{w_k\}$ be an orthonormal basis of $W$. Then the right partial trace satisfies $\langle v_1,\ptr(FQ)v_2\rangle=\sum_k\langle v_1\ot w_k, F(v_2\ot w_k)\rangle$, because $Q$ vanishes on $V\ot V$. But $\langle v_1\ot w_k, F(v_2\ot w_k)\rangle=0$ because $V$ and $W$ lie orthogonal to each other. The argument for the right partial trace is the same.
\end{proof}

We now apply $\boxplus$ to R-matrices. 
The following result is known \cite{Lyubashenko:1987,Gurevich:1991,Hietarinta:1993_3}. But since no proof seems to be available in the literature, we state it here with a proof.

\begin{proposition}\label{proposition:Boxplus-preserves-YBE}
	Let $R \in \R(V)$, $\Rti\in\R(\Vti)$. Then $R\boxplus\Rti\in\R(V \oplus \Vti)$.
\end{proposition}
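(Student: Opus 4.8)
The plan is to verify the Yang-Baxter equation \eqref{eq:YBE} for $R\boxplus\Rti$ directly, exploiting the block-diagonal structure of \eqref{eq:DefBoxplus}. Unitarity and involutivity (when applicable) are already handed to us by Lemma~\ref{lemma:boxplus}~$ii)$, so the only substantive point is that $R\boxplus\Rti$ solves \eqref{eq:YBE} on the triple tensor product $(V\oplus\Vti)\tp{3}$. First I would fix an orthogonal decomposition of $(V\oplus\Vti)\tp{3}$ into the eight summands indexed by words $w\in\{V,\Vti\}^3$ (that is, $V\ot V\ot V$, $V\ot V\ot\Vti$, and so on), and observe that each of the three operators appearing in \eqref{eq:YBE} --- namely $(R\boxplus\Rti)\ot\id$ and $\id\ot(R\boxplus\Rti)$ in their various positions --- maps each such summand into the span of summands obtained by permuting its letters. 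Since the Yang-Baxter relation is an identity in $\End$ of the triple product, it suffices to check it summand by summand on these permutation-orbits of words.

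The key simplification is that the eight words fall into three orbit-types under the letter-permutation action, and on each type the computation reduces to something already known. On the two pure words $VVV$ and $\Vti\Vti\Vti$, the operator $R\boxplus\Rti$ restricts to $R$ (resp.\ $\Rti$) by definition \eqref{eq:DefBoxplus}, so \eqref{eq:YBE} holds there precisely because $R\in\R(V)$ and $\Rti\in\R(\Vti)$ satisfy it by hypothesis. The remaining six words are the genuinely mixed ones, and these split into two orbits of size three, $\{VV\Vti,\,V\Vti V,\,\Vti VV\}$ and $\{V\Vti\Vti,\,\Vti V\Vti,\,\Vti\Vti V\}$. On each such orbit, I would show that $R\boxplus\Rti$ acts exactly as the flip $F$ acts on the corresponding summands: whenever the two adjacent tensor legs carry \emph{different} labels, \eqref{eq:DefBoxplus} says $R\boxplus\Rti$ acts as $F$; and whenever they carry the \emph{same} label, $R\boxplus\Rti$ acts as $R$ or $\Rti$, but the point is that when composing the three factors of \eqref{eq:YBE} across a mixed orbit the \emph{net} effect coincides with that of $F$. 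Since the flip $F$ on $(V\oplus\Vti)\tp{2}$ is itself an involutive R-matrix (as noted in the examples, $\pm F\in\R_0$), the braid relation $F_1F_2F_1=F_2F_1F_2$ holds, which discharges the mixed orbits.

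The one place that requires genuine care --- and which I expect to be the main obstacle --- is the orbit where adjacent legs can share a label, such as $VV\Vti$: here the leftmost application of $R\boxplus\Rti$ in the two adjacent $V$-legs genuinely acts as $R$ and not as $F$, so one cannot simply invoke the braid relation for $F$ verbatim. The resolution is to track where each basis vector lands: because $R$ maps $V\ot V$ into itself and $\Rti$ maps $\Vti\ot\Vti$ into itself, conjugating through the sequence of operators in \eqref{eq:YBE} one checks that the label-pattern of the outgoing vector is always a fixed permutation of the incoming pattern, and that the scalar/operator coefficients produced by the $R$-, $\Rti$-, and $F$-blocks on the two sides of \eqref{eq:YBE} agree. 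I would organize this by noting that for a mixed word exactly one of the three pairs of adjacent legs is ``pure'' (carries equal labels), so on that orbit at most one of the six operator-applications in \eqref{eq:YBE} is an honest $R$ (or $\Rti$) and the rest are flips; a short case check on which leg is the odd one out then verifies equality of the two sides. This is precisely the bookkeeping that the formula \eqref{eq:Boxplus-Iterated} in Lemma~\ref{lemma:boxplus}~$i)$ is designed to streamline, and it is the part I would write out most carefully, the pure-word and fully-mixed cases being essentially immediate.
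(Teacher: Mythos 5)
Your overall strategy is the same as the paper's: decompose $(V\oplus\Vti)\tp{3}$ into the eight summands labelled by words in $\{V,\Vti\}^3$, dispose of the pure words $VVV$ and $\Vti\Vti\Vti$ by the hypotheses on $R$ and $\Rti$, and handle the mixed words by tracking which of the factor-applications is a genuine $R$ (resp.\ $\Rti$) and which are flips; your final paragraph is essentially the computation the paper carries out. One warning, though: the claim in your second paragraph that on a mixed orbit ``the net effect coincides with that of $F$,'' so that the braid relation $F_1F_2F_1=F_2F_1F_2$ ``discharges the mixed orbits,'' is false as stated. On $V\ot\Vti\ot V$ the two sides of \eqref{eq:YBE} reduce to $F_1R_2F_1$ and $F_2R_1F_2$, both equal to $R$ acting on the first and third legs --- not the flip unless $R=F$ --- and on $(V\ot V\ot\Vti)\oplus(\Vti\ot V\ot V)$ the required identity is $F_1F_2R_1=R_2F_1F_2$ (with $F_1F_2$ the block flip of $V\ot V$ past $\Vti$), again not an instance of the braid relation for $F$. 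You retract this in your last paragraph and replace it with the correct bookkeeping, so the proof you would actually write out is sound; just make sure the write-up follows that last paragraph rather than the second, and note that on a mixed summand each side of \eqref{eq:YBE} contains exactly one honest $R$/$\Rti$ application (one per side, hence two of the six factors, not ``at most one of the six'').
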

\begin{proof}
     Invertibility of $R\boxplus\Rti$ follows from the preceding lemma and the invertibility of $R,\Rti$. The main point is to check that $\Rhat\deq R\boxplus\Rti$ solves the Yang-Baxter equation on $(V\oplus\Vti)\tp{3}$.
     
     This space is the direct sum of eight orthogonal subspaces $V_1\ot V_2\ot V_3$, where each $V_i$ is either $V$ or $\Vti$. Observe that both of the operators $\Rhat_1\Rhat_2\Rhat_1$ and $\Rhat_2\Rhat_1\Rhat_2$ decompose into direct sums of their restrictions to $V\tp{3}$,  $V\ot\Vti\ot V$, $(V\ot V\ot\Vti)\oplus(\Vti\ot V\ot V)$, $\Vti\tp{3}$, $\Vti\ot V\ot \Vti$ and $(\Vti\ot\Vti\ot V)\oplus(V\ot\Vti\ot\Vti)$. 
     
     By symmetry in $V, \Vti$ and $R,\Rti$ it suffices to show that $\Rhat_1\Rhat_2\Rhat_1$ and $\Rhat_2\Rhat_1\Rhat_2$ coincide on the first three subspaces in the list. Since $R \in \R(V)$, this is true for $V\tp{3}$. Inserting the definition of $\Rhat$, one finds that on $V\ot\Vti\ot V$, $\Rhat_1\Rhat_2\Rhat_1$ acts as $F_1R_2F_1$, while $\Rhat_2\Rhat_1\Rhat_2$ acts as $F_2R_1F_2$. These two operators coincide with the one acting as $R$ on the first and third tensor factors.
     
     Let $W\deq (V\ot V\ot\Vti)\oplus(\Vti\ot V\ot V)$. The restrictions of both sides of the Yang-Baxter equation evaluate to $\Rhat_1\Rhat_2\Rhat_1|_{W}=(F_1F_2R_1\oplus R_1F_2F_1)|_W$ and $\Rhat_2\Rhat_1\Rhat_2|_W=(R_2F_1F_2\oplus F_2F_1R_2)|_W$. The operator $F_1F_2$ coincides with the tensor flip on $(V \otimes V) \otimes \Vti$. This implies $F_1F_2R_1=R_2F_1F_2$. Likewise $F_2F_1$ is the tensor flip on $\Vti \otimes (V \otimes V)$ and we have $F_2F_1R_2=R_1F_2F_1$. Therefore $\Rhat_1\Rhat_2\Rhat_1|_{W} = \Rhat_2\Rhat_1\Rhat_2|_W$, which finishes the proof. 
\end{proof}

By Lemma~\ref{lemma:boxplus}~$ii)$, $\boxplus$ preserves involutivity, and thus also induces a binary operation on $\R_0\subset\R$. 

It is clear that variants of this operation are possible: A trivial change would be to use $-F$ instead of $F$ in the definition of $\boxplus$, but also more substantial variations exist \cite{Hietarinta:1993_3}. However, all these variations lead to R-matrices that are equivalent in the sense of Def.~\ref{Definition:R0-Equivalence}.

For characterizing equivalence classes of R-matrices, we next describe how $\boxplus$ acts on the Yang-Baxter characters of $S_\infty$ and their Thoma parameters.

\begin{proposition}\label{proposition:boxplus+thoma}
     Let $R,\Rti\in\R_0$ have dimensions $d,\tilde d$.  
 	\begin{enumerate}
		\item The characters of $R$, $\tilde R$, and $R\boxplus\Rti$ are related by ($c_n$ an $n$-cycle, $n\geq2$)
		\begin{align}\label{eq:boxplusontraces}
			\chi_{R\boxplus\Rti}(c_n)
			=
			\frac{d^n}{(d+\tilde d)^n}\,\chi_R(c_n)
			+
			\frac{\tilde d^n}{(d+\tilde d)^n}\,\chi_{\tilde R}(c_n)\,.
		\end{align}
		\item Let $(\alpha,\beta)$ and $(\tilde\alpha,\tilde\beta)$ be the Thoma parameters of $R$ and $\tilde R$, respectively. Then the Thoma parameters of $R\boxplus\Rti$ are the non-increasing arrangements of
		\begin{align}
			\{\hat\alpha_i\}_i
			&=
			\{\tfrac{d}{d+\tilde d}\,\alpha_k,\,\tfrac{\tilde d}{d+\tilde d}\,\tilde\alpha_l,\,:\,k,l\in\Nl\}
			\,,\nonumber\\
			\{\hat\beta_i\}_i
			&=
			\{\tfrac{d}{d+\tilde d}\,\beta_k,\,\tfrac{\tilde d}{d+\tilde d}\,\tilde\beta_l,\,:\,k,l\in\Nl\}\,.
			\label{eq:ThomaParametersOfBoxplus}
	\end{align}
	\end{enumerate}
\end{proposition}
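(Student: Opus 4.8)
The plan is to prove part $ii)$ as a direct consequence of part $i)$, using Thoma's formula \eqref{eq:Thoma-Formula} together with the key structural fact that the Thoma parameters of a character are precisely the data appearing in its values on $n$-cycles. First I would establish \eqref{eq:boxplusontraces}. The natural approach is to use the character formula \eqref{eq:YB-Character-on-n-cycles}, $\chi_R(c_n)=d^{-n}\Tr_{V\tp{n}}(R_1\cdots R_{n-1})$, applied to $\hat R\deq R\boxplus\Rti$ with base space $V\oplus\Vti$ of dimension $d+\tilde d$. The heart of the computation is to evaluate $\Tr_{(V\oplus\Vti)\tp{n}}(\hat R_1\cdots\hat R_{n-1})$. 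I would decompose $(V\oplus\Vti)\tp{n}$ into the $2^n$ summands $V_{\epsilon_1}\ot\cdots\ot V_{\epsilon_n}$ with each $V_{\epsilon_k}\in\{V,\Vti\}$, and argue that the word $\hat R_1\cdots\hat R_{n-1}$ represents the $n$-cycle $\sigma_1\cdots\sigma_{n-1}$, which is transitive. Because $\hat R=R\oplus\Rti\oplus F$ acts as the flip on all mixed tensors, a transitive permutation will mix all tensor slots, so the only summands contributing nonzero diagonal (trace) terms are the two pure ones $V\tp{n}$ and $\Vti\tp{n}$, on which $\hat R_1\cdots\hat R_{n-1}$ restricts to $R_1\cdots R_{n-1}$ and $\Rti_1\cdots\Rti_{n-1}$ respectively. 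This gives $\Tr_{(V\oplus\Vti)\tp{n}}(\hat R_1\cdots\hat R_{n-1})=\Tr_{V\tp{n}}(R_1\cdots R_{n-1})+\Tr_{\Vti\tp{n}}(\Rti_1\cdots\Rti_{n-1})$, and dividing by $(d+\tilde d)^n$ and reinserting \eqref{eq:YB-Character-on-n-cycles} yields \eqref{eq:boxplusontraces}.

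With \eqref{eq:boxplusontraces} in hand, part $ii)$ follows by matching Thoma formulas. By \eqref{eq:Thoma-Formula}, $\chi_R(c_n)=\sum_i\alpha_i^n+(-1)^{n+1}\sum_i\beta_i^n$, and similarly for $\Rti$. Substituting into \eqref{eq:boxplusontraces} and absorbing the prefactors into the parameters, I would obtain
\begin{align*}
	\chi_{R\boxplus\Rti}(c_n)
	=
	\sum_k\Big(\tfrac{d}{d+\tilde d}\alpha_k\Big)^n
	+\sum_l\Big(\tfrac{\tilde d}{d+\tilde d}\tilde\alpha_l\Big)^n
	+(-1)^{n+1}\Big(\sum_k\Big(\tfrac{d}{d+\tilde d}\beta_k\Big)^n
	+\sum_l\Big(\tfrac{\tilde d}{d+\tilde d}\tilde\beta_l\Big)^n\Big).
\end{align*}
This is exactly the Thoma formula \eqref{eq:Thoma-Formula} for the character $\chi_{R\boxplus\Rti}$ with the merged $\alpha$- and $\beta$-sequences $\{\hat\alpha_i\}_i$, $\{\hat\beta_i\}_i$ displayed in \eqref{eq:ThomaParametersOfBoxplus}. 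Since an extremal character determines its Thoma parameters uniquely by Thm.~\ref{theorem:ThomaParameters}, and $\chi_{R\boxplus\Rti}$ is extremal by Prop.~\ref{proposition:chiR-extremal}, I would conclude that the Thoma parameters of $R\boxplus\Rti$ are precisely the non-increasing rearrangements of these merged sequences.

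The main obstacle I anticipate is the trace computation in part $i)$, specifically justifying rigorously that only the two pure summands contribute. The subtle point is that $\hat R_1\cdots\hat R_{n-1}$ is not itself a permutation operator on the summands $V_{\epsilon_1}\ot\cdots\ot V_{\epsilon_n}$; rather, on a mixed summand it acts through a combination of flips and the R-matrix blocks, and one must track how the trace localizes. The clean way to handle this is to observe that taking the trace of $\hat R_1\cdots\hat R_{n-1}$ is the evaluation $\chi_{\hat R}(c_n)\cdot(d+\tilde d)^n$, and that $\hat R=R\oplus\Rti\oplus F$ acts by the genuine flip $F$ on every mixed tensor; the underlying permutation $\sigma_1\cdots\sigma_{n-1}=c_n$ is an $n$-cycle and hence acts transitively on the index set $\{1,\dots,n\}$ labelling the tensor factors. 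A basis vector $e_{i_1}\ot\cdots\ot e_{i_n}$ (in a basis adapted to the decomposition $V\oplus\Vti$) can only contribute to the trace if its image under $\hat R_1\cdots\hat R_{n-1}$ has a nonzero component along itself, and one checks that the flip-action permutes the $V$- versus $\Vti$-labels of the tensor slots according to $c_n$; transitivity then forces all labels to be equal for a diagonal contribution, leaving only the pure summands. Making this localization argument precise — ideally by an explicit basis bookkeeping or an inductive reduction over $n$ mirroring the structure of Prop.~\ref{proposition:Boxplus-preserves-YBE} — is the one step that requires genuine care; everything else is a formal substitution.
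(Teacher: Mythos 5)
Your proposal is correct and follows essentially the same route as the paper: both reduce part $i)$ to showing that only the two pure summands $V\tp{n}$ and $\Vti\tp{n}$ contribute to the trace of $\Rhat_1\cdots\Rhat_{n-1}$, and both obtain part $ii)$ by substituting Thoma's formula into \eqref{eq:boxplusontraces} and invoking the uniqueness of the Thoma parameters of an extremal character (the paper additionally spells out that the merged sequences do lie in $\Thoma$ before doing so). The only difference is cosmetic: the paper proves the localization claim by inductively peeling off the rightmost factor $\Rhat_{n-1}$ (non-orthogonality forces $\eps_{n-1}=\eps_n$, then $\eps_{n-2}=\eps_{n-1}$, and so on), whereas you observe that each $\Rhat_k$ maps the summand labelled $\eps$ into the one labelled $\sigma_k\cdot\eps$ and appeal to the transitivity of the resulting $n$-cycle on the labels --- the same mechanism, packaged globally rather than inductively.
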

\begin{proof}
  $i)$ We denote the base spaces of $R$ and $\tilde R$ by $V_+$ and $V_-$, respectively, and write $\Rhat\deq R\boxplus\Rti$ and $\hat V\deq V_+\oplus V_-$. Noting that the dimension of $\hat V$ is $d+\dti$, equation \eqref{eq:boxplusontraces} is equivalent to
     \begin{align}\label{eq:Rhat-traces}
	  \Tr_{\hat{V}\tp{n}}(\Rhat_1\cdots\Rhat_{n-1})
	  &=
	  \Tr_{V_+\tp{n}}(R_1\cdots R_{n-1})
	  +
	  \Tr_{V_-\tp{n}}(\Rti_1\cdots\Rti_{n-1})\,.
     \end{align}
     The trace on the left hand side is taken over $\hat V\tp{n}=\bigoplus_{\eps_1,\ldots,\eps_n}(V_{\eps_1}\ot\ldots\ot V_{\eps_n})$, where the sum runs over $\eps_i=\pm$, $i=1,\ldots,n$. We claim that 
     \begin{align}\label{eq:Rhat-Veps}
	  (\Rhat_1\cdots\Rhat_{n-1})V_{\eps_1}\ot\ldots\ot V_{\eps_n} \not\perp V_{\eps_1}\ot\ldots\ot V_{\eps_n}
	  \Rightarrow
	  \eps_1=\ldots=\eps_n\,.
     \end{align}
     Note that \eqref{eq:Rhat-Veps} implies \eqref{eq:Rhat-traces}: If \eqref{eq:Rhat-Veps} holds, then the trace over $\hat V\tp{n}$ simplifies to the sum of the trace over $V_+\tp{n}$ and that over $V_-\tp{n}$. As $\Rhat$ acts as $R$ and $\Rti$ on $V_+\ot V_+$ and $V_-\ot V_-$, respectively, \eqref{eq:Rhat-traces} then follows.
     
     To show \eqref{eq:Rhat-Veps}, we consider the position of the image $(\Rhat_1\cdots\Rhat_{n-1})V_{\eps_1}\ot\ldots\ot V_{\eps_n}$ relative to $V_{\eps_1}\ot\ldots\ot V_{\eps_n}$ for given $\eps_1,\ldots,\eps_n=\pm$. Assume that $\eps_{n-1}\neq\eps_n$. Then the rightmost factor $\Rhat_{n-1}$, acting non-trivially only on $V_{\eps_{n-1}}\ot V_{\eps_n}$, simplifies to the flip by definition of $\Rhat=R\boxplus\Rti$. As all other factors $\Rhat_1\cdots\Rhat_{n-2}$ act trivially on the last tensor factor $V_{\eps_n}$, this implies $(\Rhat_1\cdots\Rhat_{n-1})V_{\eps_1}\ot\ldots\ot V_{\eps_n}\subset\hat V\tp{n-1}\ot V_{\eps_{n-1}}\perp V_{\eps_1}\ot\ldots\ot V_{\eps_n}$. Hence the non-orthogonality assumption in \eqref{eq:Rhat-Veps} implies $\eps_{n-1}=\eps_n$.
     
     We next assume $\eps_{n-2}\neq\eps_{n-1}=\eps_n$. In this situation, the rightmost factor $\Rhat_{n-1}$ maps the product of the last two tensor factors $V_{\eps_n}\ot V_{\eps_n}$ onto itself, so that we are left with the same situation as before, but with the number of tensor factors reduced by one. Inductively, we conclude that the non-orthogonality assumption in \eqref{eq:Rhat-Veps} implies $\eps_1=\ldots=\eps_n$.     
     
     $ii)$ Define parameters $\hat\alpha_i$, $\hat\beta_j$ by \eqref{eq:ThomaParametersOfBoxplus}, ordered non-increasingly. Then $0\leq\hat\alpha_i,\hat\beta_j\leq1$, and for any $n\in\Nl$,
     \begin{align*}
	  \sum_i\hat\alpha_i^n+(-1)^{n+1}\sum_j\hat\beta_j^n
	  &=
	   \left(\frac{d}{d+\dti}\right)^n
	   \left(\sum_i\alpha_i^n+(-1)^{n+1}\sum_j\beta_j^n\right)\\
	   &\qquad+\left(\frac{\dti}{d+\dti}\right)^n
	   \left(\sum_i\tilde\alpha_i^n
	  +(-1)^{n+1}\sum_j\tilde\beta_j^n\right)\,.
     \end{align*}
     Since $(\alpha,\beta), (\tilde\alpha,\tilde\beta)\in\Thoma$, we have $\sum_i\alpha_i+\sum_j\beta_j\leq1$ and $\sum_i\tilde\alpha_i+\sum_j\tilde\beta_j\leq1$, and therefore $\sum_i\hat\alpha_i+\sum_j\hat\beta_j\leq1$. This shows that $(\hat\alpha,\hat\beta)\in\Thoma$. In terms of characters, the above equation reads, $n\in\Nl$,
     \begin{align*}
	   \sum_i\hat\alpha_i^n+(-1)^{n+1}\sum_j\hat\beta_j^n=\frac{d^n}{(d+\tilde d)^n}\,\chi_R(c_n) 
	   +\frac{\tilde d^n}{(d+\tilde d)^n}\,\chi_{\tilde R}(c_n)
	   \,,
     \end{align*}
     and by part $i)$ and the uniqueness of the Thoma parameters of an extremal character, identifies $(\hat\alpha,\hat\beta)$ as the Thoma parameters of $\chi_{\Rhat}$.
\end{proof}

By construction, $\boxplus$ maps pairs of parameters in $\ThomaYB $ into $\ThomaYB$, preserving the three properties of $\ThomaYB$ (Def.~\ref{definition:TYB}). But given $d,\tilde d>0$, \eqref{eq:boxplusontraces} also makes sense as an operation on general extremal characters of $S_\infty$. We do not investigate this observation any further here.

\bigskip

After these preparations, we come to the definition of special normal form R-matrices as $\boxplus$-sums of identities and negative identities. We will write $1_a$ for the identity on a vector space of dimension $a^2$, i.e. $1_a\in\R_0(\Cl^a)$.

\begin{definition}
     Let $n,m\in\Nl_0$ with $n+m\geq1$, $d^+\in\Nl^n$ and $d^-\in\Nl^m$. The normal form R-matrix $N$ with dimensions $d^+,d^-$ is     
     \begin{align}\label{eq:NormalForm}
	  N
	  \deq 
	  1_{d_1^+}\boxplus\ldots\boxplus 1_{d_n^+}\boxplus (-1_{d_1^-})\boxplus\ldots\boxplus(-1_{d_m^-})
	  \,.
     \end{align}
\end{definition}

Any R-matrix of the type \eqref{eq:NormalForm} will be called normal form R-matrix. Note that in view of Prop.~\ref{proposition:Boxplus-preserves-YBE}, $N$ is indeed an involutive R-matrix. We emphasize that $N$ is not simply a multiple of the identity: For example, $1_1\boxplus1_1=F$ is the flip of dimension 2. 

\begin{lemma}\label{lemma:ParametersOfNormalForms}
     Let $N\in\R_0$ be the normal form R-matrix with dimensions $d^+\in\Nl^n,d^-\in\Nl^m$. Then $N$ has dimension $d=\sum_id_i^++\sum_jd_j^-$, and the Thoma parameters of $\chi_N$ are
     \begin{align}\label{eq:ThomaParametersAndDimensions}
     	\alpha_i=\frac{d^+_i}{d},\quad i=1,\ldots,n,\qquad \beta_j=\frac{d^-_j}{d},\quad j=1,\ldots,m.
     \end{align}
\end{lemma}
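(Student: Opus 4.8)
The plan is to compute both claims by iterating the two results already established for the operation $\boxplus$. First I would establish the dimension formula. By definition, $N$ acts on $(\Cl^{d_1^+}\oplus\cdots\oplus\Cl^{d_n^+}\oplus\Cl^{d_1^-}\oplus\cdots\oplus\Cl^{d_m^-})$, and the dimension of this base space is the sum of the dimensions of the summands. Since $1_{d_i^+}\in\R_0(\Cl^{d_i^+})$ has base space of dimension $d_i^+$ and likewise $-1_{d_j^-}$ has base space of dimension $d_j^-$, the dimension of $N$ is $d=\sum_i d_i^++\sum_j d_j^-$. This is immediate from the iterated form \eqref{eq:Boxplus-Iterated} and requires no real work.

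For the Thoma parameters, the key is to apply Prop.~\ref{proposition:boxplus+thoma}~$ii)$ inductively, together with the table of Thoma parameters for $\pm1$ computed at the end of Section~\ref{section:characters}. Recall from that table that the building block $1_a$ (the identity of dimension $a$) has the single non-vanishing parameter $\alpha_1=1$, and $-1_a$ has $\beta_1=1$; crucially, these are independent of the dimension $a$. I would then feed these into the fusion rule \eqref{eq:ThomaParametersOfBoxplus}. Consider building $N$ up one summand at a time. After incorporating a summand of dimension $d_i^+$ carrying a single parameter $\alpha=1$, the rescaling factor $d_i^+/(\text{running total dimension})$ gets applied. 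The point is that the $\alpha$-parameter of $1_{d_i^+}$, being equal to $1$, gets rescaled precisely to $d_i^+$ divided by the final total dimension once all the nested rescalings telescope.

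The main step, and the place to be careful, is checking that the iterated rescaling factors telescope correctly to give $\alpha_i=d_i^+/d$ with $d$ the \emph{final} total dimension, rather than some partial product of intermediate dimensions. Writing $D_k$ for the dimension of the partial $\boxplus$-sum of the first $k$ summands, each application of \eqref{eq:ThomaParametersOfBoxplus} at the step joining a block of dimension $d_{k+1}$ multiplies the existing parameters by $D_k/(D_k+d_{k+1})=D_k/D_{k+1}$ and scales the new block's parameter by $d_{k+1}/D_{k+1}$. Since the block $1_{d_i^+}$ enters with parameter exactly $1$, and all subsequent steps multiply it by factors $D_k/D_{k+1}$, the product telescopes to $d_i^+/D_{\text{final}}=d_i^+/d$; the same argument with $\beta$ in place of $\alpha$ handles the negative blocks via $-1_{d_j^-}$. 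I would phrase this as a clean induction on $n+m$, with the fusion rule and the $\pm1$ parameter table as the only inputs, so no direct trace computation is needed. The only genuine obstacle is the bookkeeping of the telescoping product, which is routine once set up with the notation $D_k$.
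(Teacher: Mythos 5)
Your proposal is correct and follows essentially the same route as the paper: the paper's proof likewise combines the table of Thoma parameters for $\pm1$ (namely $\alpha_1=1$ for $1_a$ and $\beta_1=1$ for $-1_a$, independently of $a$) with an iteration of Prop.~\ref{proposition:boxplus+thoma}~$ii)$, merely stating that the computation is easy where you spell out the telescoping of the rescaling factors $D_k/D_{k+1}$ explicitly. Your added bookkeeping is a correct elaboration of the step the paper leaves to the reader.
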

\begin{proof}
     Recall that the identity
     $1\in\R_0(V)$ has $\alpha_1=1$ as its only non-vanishing Thoma parameter, and the negative identity 
     $-1\in\R_0(V)$ has $\beta_1=1$ as its only non-vanishing Thoma parameter, independently of the dimension of $V$. From this observation and the fact that $\boxplus$ adds dimensions, one can easily compute dimension and the Thoma parameters of $N$ \eqref{eq:NormalForm} by iterating Prop.~\ref{proposition:boxplus+thoma}~$ii)$, with the claimed result for $d$ and $\alpha=d^+/d$ and $\beta=d^-/d$.
\end{proof}

In Thm.~\ref{theorem:YB-parameters} we had proven that the Thoma parameters of every Yang-Baxter character lie in $\ThomaYB$. Lemma~\ref{lemma:ParametersOfNormalForms} now implies the converse, finishing the proof of Thm.~\ref{theorem:ThomaYB-Introduction} from the Introduction.

\begin{theorem}\label{theorem:ThomaYB1:1}
	The Yang-Baxter characters of $S_\infty$ are in one to one correspondence with $\ThomaYB$ (Def.~\ref{definition:TYB}) via Thoma's formula \eqref{eq:Thoma-Formula}.
\end{theorem}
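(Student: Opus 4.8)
The plan is to prove a bijection between Yang-Baxter characters and $\ThomaYB$, and the two directions are already essentially in hand from the preceding results. First I would observe that the two sets are connected through Thoma's formula \eqref{eq:Thoma-Formula}, which assigns to each $(\alpha,\beta)\in\Thoma$ a unique extremal character, and that this assignment is injective on all of $\Thoma$ by Thm.~\ref{theorem:ThomaParameters}. Hence it suffices to show that the image of $\ThomaYB$ under this assignment is \emph{exactly} the set of Yang-Baxter characters $\{\chi_R : R\in\R_0\}$; injectivity is then inherited for free.

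For the inclusion that every Yang-Baxter character has parameters in $\ThomaYB$, I would simply invoke Thm.~\ref{theorem:YB-parameters}, which states precisely that the Thoma parameters of any $\chi_R$ satisfy the three defining conditions of $\ThomaYB$ (Def.~\ref{definition:TYB}). This is the ``hard half'' of Thm.~\ref{theorem:ThomaYB-Introduction}, and it has already been established using the subfactor machinery and the rationality lemma, so at this point it is a citation rather than new work.

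For the reverse inclusion --- every $(\alpha,\beta)\in\ThomaYB$ arises as $\chi_R$ for some $R\in\R_0$ --- I would use the normal form construction. Given $(\alpha,\beta)\in\ThomaYB$, conditions $i)$ and $ii)$ say only finitely many parameters are nonzero and they sum to $1$, while condition $iii)$ plus the finiteness allows me to write each parameter over a common denominator $d$, so that $\alpha_i = d_i^+/d$ and $\beta_j = d_j^-/d$ with $d_i^+, d_j^-\in\Nl$ and $\sum_i d_i^+ + \sum_j d_j^- = d$. I would then form the normal form R-matrix $N$ of \eqref{eq:NormalForm} with these dimension data. By Lemma~\ref{lemma:ParametersOfNormalForms}, $N\in\R_0$ has exactly these Thoma parameters, so $\chi_N$ realizes $(\alpha,\beta)$.

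The only genuinely nontrivial point --- and the step I would flag as the main obstacle --- is the clearing of denominators: conditions $i)$--$iii)$ of $\ThomaYB$ must be shown to force a common denominator $d$ making all $d\alpha_i, d\beta_j$ simultaneously positive integers, which follows from rationality together with finiteness, and the constraint $\sum d_i^+ + \sum d_j^- = d$ must be exactly the normalization $\sum_i\alpha_i + \sum_j\beta_j = 1$ rescaled by $d$. Once this bookkeeping is done, the two inclusions combine with the injectivity of Thoma's parameterization on $\Thoma\supset\ThomaYB$ to give the asserted one-to-one correspondence, completing the proof of Thm.~\ref{theorem:ThomaYB-Introduction}.
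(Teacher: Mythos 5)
Your proposal is correct and follows essentially the same route as the paper: cite Thm.~\ref{theorem:YB-parameters} for one inclusion, and for the other clear denominators using rationality and finiteness, then build the normal form R-matrix $N$ with dimensions $d_i^+=d\alpha_i$, $d_j^-=d\beta_j$ and invoke Lemma~\ref{lemma:ParametersOfNormalForms} together with the normalization $\sum_i\alpha_i+\sum_j\beta_j=1$. The injectivity via Thoma's parameterization is likewise implicit in the paper's argument, so there is nothing to add.
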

\begin{proof}
	Let $(\alpha,\beta)\in\ThomaYB$. All that remains to be shown is that there is an R-matrix with these Thoma parameters. There exists $d\in\Nl$ such that all $d\alpha_i$, $d\beta_j$ are integer because the $\alpha_i$, $\beta_j$ are rational and finite in number (Def.~\ref{definition:TYB}). The character of the normal form R-matrix $N$ with dimensions $d_i^+=d\alpha_i$, $d_j^-=d\beta_j$ then has Thoma parameters $(\alpha,\beta)$ by Lemma~\ref{lemma:ParametersOfNormalForms} and the fact that the Thoma parameters sum to 1 (Cor.~\ref{corollary:YB->gamma+finite}).
\end{proof}

This result also justifies the notation $\ThomaYB $ as the Yang-Baxter simplex, consisting of all Thoma parameters of Yang-Baxter characters. Thoma's simplex~$\Thoma$, viewed as a subset of $[0,1]^\infty\times[0,1]^\infty$, where $[0,1]^\infty$ is equipped with the product topology, is a compact metrizable space. It is noteworthy to point out that $\ThomaYB\subset\Thoma$ is a dense subset, cf. \cite[Ch.~3]{BorodinOlshanski:2017}.

\medskip

At this stage, we know that for every R-matrix $R\in\R_0$, there exists a normal form R-matrix $N$ such that $\chi_R=\chi_N$. Furthermore, $N$ can be chosen in such a way that $R\sim N$, i.e. such that also the dimensions of $R$ and $N$ coincide. To see this, we just need to recall that the rescaled Thoma parameters $d\alpha_i$, $d\beta_i$ of $R$ are integers summing to $d$ (Thm.~\ref{theorem:YB-parameters}), so that the normal form R-matrix with dimensions $d_i^+=d\alpha_i$, $d_i^-=d\beta_i$ has the same character and the same dimension as $R$.

We briefly mention further properties of normal form R-matrices: Any normal form R-matrix $N$ commutes with the flip because of Lemma~\ref{lemma:boxplus}~$iii)$. Thus any involutive R-matrix is equivalent to an R-matrix which commutes with the flip, though this need not be true for an R-matrix not in normal form.

Furthermore, one can check that any normal form R-matrix $N$ satisfies
\begin{align}\label{eq:NptrN}
	N(1\ot\ptr N)N=\ptr N\ot1\,.
\end{align}

By Thm.~\ref{theorem:PartialTraces}, a normal form R-matrix $N$ (of dimension $d$) satisfies, as any involutive R-matrix, 
\begin{align}\label{eq:PartialTraceFormula-N}
	\chi_N(c_n)=d^{-n}\Tr_{V\tp{n}}(N_1\cdots N_{n-1})=d^{-n}\Tr_V((\ptr N)^{n-1}),
\end{align}
where $c_n$ is an $n$-cycle, $n\geq2$. With the exchange relation \eqref{eq:NptrN}, it is a matter of explicit calculation to prove \eqref{eq:PartialTraceFormula-N} directly for normal form R-matrices, without relying on subfactor theory.

\subsection{Parameterization by pairs of Young diagrams}\label{section:Young}

The correspondence in Thm.~\ref{theorem:ThomaYB1:1} classifies the family of Yang-Baxter {\em characters}, but it does not classify $\Rquot$ because the dimension of the base space is not recorded in the Thoma parameters. However, it is now easy to incorporate the dimension as well: Given $R\in\R_0$ with Thoma parameters $(\alpha,\beta)$ and dimension~$d$, we switch to the {\em rescaled Thoma parameters}
\begin{align}\label{eq:ab}
	a_i\deq d\alpha_i,\quad b_i\deq d\beta_i.
\end{align}
By Thm.~\ref{theorem:YB-parameters}, the $a_i,b_i$ are integers summing to $d$. We can therefore view $(a,b)$ as an ordered pair of integer partitions, or, equivalently, Young diagrams. Denoting the set of all Young diagrams (with an arbitrary number of boxes) by~$\Young$, we arrive at the following theorem, which in particular implies Thm.~\ref{theorem:Rquot=YxY-Introduction} from the Introduction.

\begin{theorem}\label{theorem:Young-Parameterization}\leavevmode
	\begin{enumerate}
		\item $\Rquot$ is in one to one correspondence with $\Young\times\Young\backslash\{(\emptyset,\emptyset)\}$ via mapping $[R]$ to the pair $(a,b)$ \eqref{eq:ab}. Classes of R-matrices of dimension~$d$ correspond to pairs of Young diagrams with $d$ boxes in total.
		\item Let $R\in\R_0$. The eigenvalues of $\ptr R$ lie in $\{\pm 1,\pm 2,\ldots,\pm d\}$ and for each eigenvalue $\la$, there exists $n_\la\in\Nl$ such that its multiplicity is $n_\la\cdot|\la|$. Define an integer partition $a$ as the ordered set of positive eigenvalues, in which $\lambda$ is repeated $n_\la$ times, and analogously for $b$ and the negative eigenvalues. Then $R$ corresponds to $(a,b)$ via the bijection in part~i).
	\end{enumerate}	
\end{theorem}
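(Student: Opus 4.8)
The plan is to leverage the two results already established: Theorem~\ref{theorem:ThomaYB1:1}, which gives a bijection between Yang-Baxter characters and $\ThomaYB$, and Theorem~\ref{theorem:YB-parameters}, which guarantees that the rescaled Thoma parameters $a_i = d\alpha_i$, $b_i = d\beta_i$ are natural numbers. For part~$i)$, I would first observe that the remark preceding the theorem establishes the key point: two R-matrices $R,S\in\R_0$ are equivalent if and only if they have the same character \emph{and} the same dimension. Since the normalized Thoma parameters $(\alpha,\beta)$ together with the dimension $d$ are equivalent data to the integer pair $(a,b)$ via \eqref{eq:ab} (one recovers $d = \sum_i a_i + \sum_j b_j$ by Cor.~\ref{corollary:YB->gamma+finite}, and then $\alpha_i = a_i/d$, $\beta_j = b_j/d$), the assignment $[R]\mapsto (a,b)$ is well defined and injective. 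For surjectivity onto $\Young\times\Young\setminus\{(\emptyset,\emptyset)\}$, I would invoke the normal form construction: given any nonempty pair of Young diagrams with parts $d^+_i, d^-_j$, the normal form R-matrix $N$ of \eqref{eq:NormalForm} with these dimensions realizes exactly these rescaled parameters by Lemma~\ref{lemma:ParametersOfNormalForms}. The exclusion of $(\emptyset,\emptyset)$ corresponds to $d\geq 1$, i.e.\ the base space being nontrivial. The final sentence about total box count is then immediate from $d = \sum_i a_i + \sum_j b_j$.

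For part~$ii)$, the strategy is to connect the eigenvalues of $\ptr R$ to the rescaled Thoma parameters through the key trace identity \eqref{eq:keyrelation-ptr}. Writing the character in terms of Thoma parameters via \eqref{eq:Thoma-Formula} and the right-hand trace in terms of the (real, by selfadjointness) eigenvalues $t_j$ of $\ptr R$ gives precisely equation \eqref{eq:abt}. Specializing to odd $n = 2m+1$ and applying part~$i)$ of the preparatory Lemma (the one preceding Def.~\ref{definition:TYB}) with $\{x_i\} = \{d\alpha_i, d\beta_i\}$ and $y_j = t_j^2$, I obtain that the nonzero values among the $t_j^2$ coincide, as a multiset, with the set $\{(d\alpha_i)^2, (d\beta_j)^2\} = \{a_i^2, b_j^2\}$. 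Part~$ii)$ of that same Lemma, applied as in the proof of Theorem~\ref{theorem:YB-parameters}, forces the $a_i, b_j$ to be integers, so the $|t_j|$ lie in $\{1,\ldots,d\}$. The remaining work is a bookkeeping argument matching signs and multiplicities: I would use the full system \eqref{eq:abt} (including even $n$) to separate the positive from the negative eigenvalues, showing that the positive eigenvalues of $\ptr R$ are exactly the distinct values among $\{a_i\}$ and the negative ones are the negatives of the distinct values among $\{b_j\}$.

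The sign-and-multiplicity bookkeeping is where the main obstacle lies, and it requires some care. The relation \eqref{eq:abt} only gives power sums, and an eigenvalue $\lambda$ of $\ptr R$ contributes $\lambda^{n-1}$ while a Thoma parameter $a_i$ contributes $a_i^n$ on the character side, so the exponents are shifted and the dictionary is not a naive equality of multisets. The clean way to extract the multiplicity statement is to recognize that an integer value $\lambda$ appearing among the $a_i$ with a certain number $n_\lambda$ of repetitions contributes $n_\lambda \lambda^n$ to $\sum_i a_i^n = d^n \sum_i \alpha_i^n$, whereas on the trace side the same value $\lambda$ as an eigenvalue of $\ptr R$ with multiplicity $\mathrm{mult}(\lambda)$ contributes $\mathrm{mult}(\lambda)\,\lambda^{n-1}$; comparing the coefficient of $\lambda^{n}$ after rewriting $\lambda^{n} = \lambda \cdot \lambda^{n-1}$ forces $\mathrm{mult}(\lambda) = n_\lambda \cdot \lambda = n_\lambda \cdot |\lambda|$ for positive $\lambda$, and the analogous statement with the alternating sign $(-1)^{n+1}$ handles the $b_j$ and the negative eigenvalues. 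This matching of power-sum data to eigenvalue multiplicities, keeping the exponent shift straight, is the only genuinely delicate step; everything else reduces to the already-proven lemmas.
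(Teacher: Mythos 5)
Your part~i) follows the paper's proof essentially verbatim: injectivity comes from the fact that $\sim$ is determined by the character together with the dimension, the dimension is recovered as $d=\sum_i a_i+\sum_j b_j$, and surjectivity onto $\Young\times\Young\backslash\{(\emptyset,\emptyset)\}$ is supplied by the normal forms via Lemma~\ref{lemma:ParametersOfNormalForms}. For part~ii) you take a genuinely different route. The paper replaces $R$ by its normal form $N$ (legitimate because $\ptr R\cong\ptr N$ by Thm.~\ref{theorem:PartialTraces}) and then simply reads off $\ptr N=\bigoplus_i\ptr(1_{d_i^+})\oplus\bigoplus_j\ptr(-1_{d_j^-})$ with $\ptr(\pm 1_{d_i^\pm})=\pm d_i^\pm\,\id$ from Lemma~\ref{lemma:boxplus}~iv); the eigenvalues and their multiplicities $n_\la|\la|$ are then immediate. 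You instead extract the spectrum of $\ptr R$ from the power-sum identity \eqref{eq:abt}. That can be made to work, most cleanly by multiplying \eqref{eq:abt} by $d^n$ and rewriting the left side as $\sum_i a_i\,a_i^{\,n-1}+\sum_j b_j\,(-b_j)^{\,n-1}$, so that both sides become the $(n-1)$-st power sums of two finite multisets of nonzero reals --- the multiset in which each value $\la=a_i$ occurs $n_\la\la$ times and each $-b_j$ occurs $b_j$ times, versus the multiset of nonzero eigenvalues of $\ptr R$ --- and then invoking the standard fact (Newton's identities, or a Vandermonde argument) that finite multisets of nonzero numbers are determined by their power sums. This buys a proof of part~ii) that never mentions normal forms, at the cost of exactly the bookkeeping you identify as delicate.

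Two loose ends need repair. First, your intermediate claim that ``the nonzero values among the $t_j^2$ coincide, as a multiset, with $\{a_i^2,b_j^2\}$'' is not what the preparatory lemma asserts (it only concludes rationality), and it is false as a multiset statement: the lemma's proof yields $y_1=x_1^2$ with multiplicities related by $\nu=\mu x_1$, not $\nu=\mu$. Your later formula $\mathrm{mult}(\la)=n_\la|\la|$ is the correct relation, so the earlier multiset claim should be dropped rather than relied upon. Second, the power-sum comparison only controls the \emph{nonzero} eigenvalues of $\ptr R$ (since $0^{n-1}=0$), so by itself it does not exclude $0$ from the spectrum, which the theorem does assert by placing the eigenvalues in $\{\pm1,\dots,\pm d\}$. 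You need the dimension count $\sum_\la n_\la|\la|=\sum_i a_i+\sum_j b_j=d=\dim V$, which shows the nonzero eigenspaces already exhaust $V$. With these two points fixed, your argument is complete and is a valid alternative to the paper's normal-form computation.
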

\begin{proof}
	$i)$ If $R,S\in\R_0$ are equivalent, they have the same dimension $d$ and the same Thoma parameters $(\alpha,\beta)$ and hence the same rescaled parameters \eqref{eq:ab}. Conversely, if $R,S\in\R_0$ have the same parameters $(a,b)$, they have the same dimension $d=\sum_i(a_i+b_i)$ and  therefore the same Thoma parameters, i.e. $R\sim S$. This also shows the claim about the dimension, and that all pairs of Young diagrams with the exception of $(\emptyset,\emptyset)$ occur.
	
	$ii)$	We may switch from $R$ to its normal form $N$ (with dimensions $d^\pm$), which has the same partial trace $\ptr N\cong\ptr R$. Repeated application of Lemma~\ref{lemma:boxplus}~$iv)$ shows $\ptr N=\bigoplus_i\ptr(1_{d_i^+})\oplus\bigoplus_i\ptr(-1_{d_i^-})$. But $\ptr(\pm1_{d_i^\pm})=\pm d_i^\pm \id_{i,\pm}$, where $\id_{i,\pm}$ is the identity matrix on $\Cl^{d_i^\pm}$. Hence the eigenvalues of $\ptr R$ are precisely the numbers $\pm d_i^\pm$. The eigenvalue $\pm d_i^\pm$ has multiplicity $n_{d_i^\pm}\cdot d_i^\pm$, where $n_{d_i^\pm}$ is the number of times that $d_i^\pm$ occurs in $d^\pm$.
	
	In view of \eqref{eq:ThomaParametersAndDimensions} and  \eqref{eq:ab}, the rescaled Thoma parameters of $N$ are exactly $a_i=d_i^+$, $b_i=d_i^-$. As $d_i^\pm$ occurs $n_{d_i^\pm}$ times in this list, the proof is finished.
\end{proof}

To illustrate the correspondence with ordered pairs of Young diagrams, let us list all normal forms of dimension two in terms of box sums and diagrams:
\begin{center}
	\begin{tabular}{|c|c|c|c|c|}
			\hline
			$1_2$ & $-1_2$ & $1_1\boxplus1_1$  & $-1_1\boxplus-1_1$ & $1_1\boxplus-1_1$
			\\
			\hline
                        $(\ydiagram{2},\emptyset)$&
                        $(\emptyset,\ydiagram{2})$&
                        $(\ydiagram{1,1},\emptyset)$&
                        $(\emptyset,\ydiagram{1,1})$&
                        $(\ydiagram{1},\ydiagram{1})$
			\\
			\hline
	\end{tabular}
\end{center}
From left to right, these R-matrices (in $\End(\Cl^4)$) are: 1) the identity, 2) the negative identity, 3) the flip, 4) equivalent to the negative flip, and 5)
\begin{align*}
	1_1\boxplus-1_1=\left(
			\begin{array}{cccc}
				\;1\\
				&&\;1\\
				&\;1\\
				&&&-1
			\end{array}
			\right)\,.
\end{align*}
As a higher dimensional example, consider $(\ydiagram{3,1},\ydiagram{2,2})$. This R-matrix has dimension~$8$ (the number of boxes) and Thoma parameters $\alpha=(\frac{3}{8},\frac{1}{8})$, $\beta=(\frac{1}{4},\frac{1}{4})$.

\bigskip

The rescaled Thoma parameters are also useful for describing the $\boxplus$ operation introduced in Sect.~\ref{section:NormalForms}. We have already seen that $\boxplus$ gives $\R_0$ the structure of an abelian semigroup and preserves equivalence, i.e., descends to the quotient $\Rquot$. Recalling the effect of $\boxplus$ on the level of Thoma parameters (Prop.~\ref{proposition:boxplus+thoma}~$ii)$), it becomes apparent that for the rescaled parameters, we have
\begin{align}
	(a,b)\boxplus(a',b')=(a\cup a',b\cup b'),
\end{align}
where $a\cup a'$ denotes the partition whose parts are the union of those of $a$ and $a'$.

\medskip

Another operation on $\R$ is the tensor product of R-matrices. For $R\in\R(V)$, $S\in\R(W)$, we define $R\boxtimes S\in\R(V\ot W)$ by
\begin{align}
	R\boxtimes S = F_2(R\ot S)F_2 : V\ot W\ot V\ot W\to V\ot W\ot V\ot W,
\end{align}
where $F_2$ exchanges the second and third tensor factors. It is evident that $\boxtimes$ preserves the Yang-Baxter equation and involutivity, i.e. it defines a product on $\R$ and $\R_0$.

\begin{lemma}\label{lemma:tensorproduct}
	Let $R,R'\in\R_0$ have rescaled Thoma parameters $(a,b)$ and $(a',b')$, respectively. Then the rescaled Thoma parameters of $R\,\boxtimes\,R'$ are the non-increasing arrangements of 
	\begin{align}
		\{\hat a_{ij}\}&=\{a_ia_j',\,b_ib_j'\},\nonumber\\
		\{\hat b_{ij}\}&=\{a_i b_j',\,b_i a_j'\}.
	\end{align}
\end{lemma}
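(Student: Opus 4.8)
The plan is to reduce everything to the observation that $\boxtimes$ implements the \emph{internal tensor product} of symmetric-group representations, so that the Yang-Baxter character is multiplicative under $\boxtimes$; the rescaled parameters then drop out of Thoma's formula by expanding a product. Since it was already noted above that $\boxtimes$ preserves the Yang-Baxter equation and involutivity, $R\boxtimes R'\in\R_0$ and $\chi_{R\boxtimes R'}$ is a well-defined extremal character (Prop.~\ref{proposition:chiR-extremal}). Write $d=\dim V$, $\tilde d=\dim W$, so $R\boxtimes R'$ has dimension $d\tilde d$.

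First I would analyze the generators on $(V\ot W)\tp{n}$. Let $\Pi:(V\ot W)\tp{n}\to V\tp{n}\ot W\tp{n}$ be the canonical unitary collecting all $V$-legs to the left and all $W$-legs to the right. From $R\boxtimes R'=F_2(R\ot R')F_2$ one checks by tracking tensor legs that $\Pi\,(R\boxtimes R')_k\,\Pi^*=R_k\ot R'_k$, where on the right $R_k$ acts on the $k,k+1$ factors of $V\tp{n}$ and $R'_k$ on the corresponding factors of $W\tp{n}$. In particular, conjugating the product for the cycle $c_n=\sigma_1\cdots\sigma_{n-1}$ gives $\Pi\,(R\boxtimes R')_1\cdots(R\boxtimes R')_{n-1}\,\Pi^*=(R_1\cdots R_{n-1})\ot(R'_1\cdots R'_{n-1})$. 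Using $\Tr_{A\ot B}(X\ot Y)=\Tr_A(X)\Tr_B(Y)$ together with the trace formula \eqref{eq:YB-Character-on-n-cycles}, this yields
\begin{align*}
\chi_{R\boxtimes R'}(c_n)
&= (d\tilde d)^{-n}\,\Tr_{(V\ot W)\tp{n}}\big((R\boxtimes R')_1\cdots(R\boxtimes R')_{n-1}\big)\\
&= (d\tilde d)^{-n}\,\Tr_{V\tp{n}}(R_1\cdots R_{n-1})\cdot\Tr_{W\tp{n}}(R'_1\cdots R'_{n-1})\\
&= \chi_R(c_n)\,\chi_{R'}(c_n),\qquad n\geq2.
\end{align*}

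Next I would substitute Thoma's formula \eqref{eq:Thoma-Formula} for both factors, writing the Thoma parameters of $R,R'$ as $(\alpha,\beta),(\alpha',\beta')$, and expand the product. Using $(-1)^{2n+2}=1$, the contributions with coefficient $+1$ collect into $\sum_{i,j}\big[(\alpha_i\alpha'_j)^n+(\beta_i\beta'_j)^n\big]$ and those with coefficient $(-1)^{n+1}$ into $\sum_{i,j}\big[(\alpha_i\beta'_j)^n+(\beta_i\alpha'_j)^n\big]$. The proposed families lie in $[0,1]$ and sum to $(\sum_i\alpha_i+\sum_i\beta_i)(\sum_j\alpha'_j+\sum_j\beta'_j)\le1$, so their non-increasing arrangements constitute a point of $\Thoma$. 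Since $\chi_{R\boxtimes R'}$ is an extremal character, uniquely determined by its values on cycles (Thm.~\ref{theorem:ThomaParameters}), comparison with \eqref{eq:Thoma-Formula} identifies its (unscaled) Thoma parameters as $\{\alpha_i\alpha'_j,\beta_i\beta'_j\}$ and $\{\alpha_i\beta'_j,\beta_i\alpha'_j\}$. Rescaling by $d\tilde d$ then turns $\alpha_i\alpha'_j$ into $(d\alpha_i)(\tilde d\alpha'_j)=a_ia'_j$, and similarly for the other three families, giving exactly $\{\hat a_{ij}\}=\{a_ia'_j,b_ib'_j\}$ and $\{\hat b_{ij}\}=\{a_ib'_j,b_ia'_j\}$.

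The main obstacle is the first step: verifying carefully that conjugation by $\Pi$ turns $(R\boxtimes R')_k$ into $R_k\ot R'_k$, i.e.\ that the definition $F_2(R\ot R')F_2$ genuinely realizes the diagonal (internal) tensor product of the two $S_n$-representations. This is a bookkeeping of tensor legs that must be done with some care, since the local flip $F_2$ in the definition and the global reorganization $\Pi$ have to be reconciled; once it is settled, the character multiplicativity and the subsequent expansion are routine.
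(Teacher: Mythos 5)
Your proof is correct and follows essentially the same route as the paper's: establish multiplicativity of the character under $\boxtimes$ via factorization of the trace, then expand the product of the two Thoma formulas and invoke uniqueness of Thoma parameters. The paper simply asserts the trace factorization in one line, whereas you justify it via the reordering unitary $\Pi$; that is a worthwhile added detail but not a different argument.
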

\begin{proof}
	With $d,d'$ the dimensions of $R,R'$, we have on an $n$-cycle, $n\geq2$,
	\begin{align*}
		(d\cdot d')^{n}\chi_{R\boxtimes R'}(c_n)
		&=
		\Tr_{(V\ot W)\tp{n}}((R\boxtimes R')_1\cdots(R\boxtimes R')_{n-1})
		\\
		&=
		\Tr_{V\tp{n}}(R_1\cdots R_{n-1})
		\Tr_{W\tp{n}}(R'_1\cdots R'_{n-1})
		\\
		&=
		d^n\chi_R(c_n)\cdot (d')^n\chi_{R'}(c_n)
		\\
		&=
		\left(\sum_ia_i^n+(-1)^{n+1}\sum_jb_j^n\right)
		\left(\sum_k (a'_k)^n+(-1)^{n+1}\sum_l(b'_l)^n\right)
		\\
		&=
		\sum_{i,k}(a_ia'_k)^n+\sum_{j,l}(b_jb'_l)^n
		+(-1)^{n+1}\left(\sum_{i,l}(a_ib'_l)^n
		+\sum_{j,k}(b_ja_k')^n\right),
	\end{align*}
	and the claim follows.
\end{proof}

It follows that $\boxtimes$ defines an associative commutative product on $\Rquot$ for which the class $[1_1]=(\ydiagram{1},\emptyset)$ (consisting of the identity R-matrix in dimension $d=1$) is the unit, that is, $\Rquot$ has a second unital abelian semigroup structure.

From the description of $\boxtimes$ and $\boxplus$ in terms of the rescaled Thoma parameters, it is evident that they satisfy the distributive law
\begin{align}
	([R]\boxplus[S])\boxtimes[T]
	=
	([R]\boxtimes[T])\boxplus([S]\boxtimes[T])
	\,,
	\qquad R,S,T\in\R_0\,.
\end{align}
These operations give $\Rquot$ the structure of a semiring, sometimes also referred to as a rig (ring without negatives).
Additionally, the multiplication rules for rescaled Thoma parameters in
Lemma \ref{lemma:tensorproduct} can be generalized to a
\(\lambda\)-operation. This \(\lambda\)-operation is most easily described
using symmetric polynomials and we therefore postpone it until
Sect.~\ref{section:symmfunctions}. The consequences of the ring and \(\lambda\) structures of \(\Rquot\) will be an interesting topic of further study. For example, can the \(\lambda\) operation be directly interpreted in terms of \(R\) and its base space \(V\) without reference to rescaled Thoma parameters?

As yet another operation on $\R$, we briefly mention the cabling procedure known from the braid groups, applied to the Yang-Baxter equation by Wenzl \cite{Wenzl:1990}: Given any $p\in\Nl$, one can form cabling powers $R^{{\rm c}(p)}$, which lie in $\R$ (or $\R_0$) if $R$ does. We do not give details here because it turns out that $R^{{\rm c}(p)}\sim R^{\boxtimes p}$ for all $R\in\R_0$, $p\in\Nl$.

\section{Yang-Baxter representations}\label{section:YBReps}

Our basic Def.~\ref{Definition:R0-Equivalence} of equivalence of R-matrices refers only to the $S_n$-representations $\rho_R^{(n)}$. We have seen already that $R\sim S$ implies unitary equivalence of the GNS representations $\pi_\tau\circ\rho_R\cong\pi_\tau\circ\rho_S$ \eqref{eq:GNS-equivalence}. Now we investigate the implications of $R\sim S$ for the homomorphisms $\rho_R$, $\rho_S$.

\subsection{R-matrices and $K$-theory}
In this section we extend the previously defined $\rho_R$ to a ${}^*$-homomorphism of $C^*$-algebras, $\rho_R \colon C^*S_\infty \to \E^{\infty}$, where $\E^{\infty}$ is the $C^*$-algebraic counterpart of the algebra $\E$ from Section~\ref{section:subfactors}. On $K$-theory the map $\rho_R$ will induce a ring homomorphism $\rho_{R*} \colon K_0(C^*S_\infty) \to \Zl[\tfrac{1}{d}]$ with $d = \dim(V)$. The equivalence relation introduced in Def.~\ref{Definition:R0-Equivalence} will then translate into the approximate unitary equivalence of the corresponding ${}^*$-homomorphisms. In fact, when the invariant $\rho_{R*}$ is composed with the canonical inclusion $\Zl[\tfrac{1}{d}] \subset \Rl$ it is an indecomposable finite trace on $K_0(C^*S_\infty)$ in the sense of Kerov and Vershik and we recover \cite[Thm.~2.3]{VershikKerov:1983} from the Yang-Baxter equation. For the basic facts about UHF-algebras that we use we refer the reader to \cite{RoerdamStoermer:2002}.

Let $R \in \R_0(V)$, let $d = \dim(V)$ and denote the associated unitary representation of $S_n$ by $\rho_R^{(n)}$. We obtain the following sequence of ${}^*$-homomorphisms, which we will continue to denote $\rho^{(n)}_R$: 
\[
	\rho^{(n)}_R \colon C^*S_n = \Cl[S_n] \to \End(V^{\otimes n}) 
\]
Let $\E^{\infty}$ be the $C^*$-algebra obtained as the infinite tensor product of the algebras $\End(V)$, i.e.\ as the $C^*$-algebraic inductive limit
\[
	\E^{\infty} = \varinjlim_n \End(V^{\otimes n})
\]
taken over the maps sending $T$ to $T \otimes \id_V$. This is an infinite UHF-algebra with $K_0(\E^{\infty}) \cong \Zl[\tfrac{1}{d}]$ and $K_1(\E^{\infty}) \cong 0$. The algebra $\E^{\infty}$ has a unique trace that restricts to the normalized trace on $\End(V^{\otimes n})$. It induces an explicit isomorphism $\tau_* \colon K_0(\E^{\infty}) \to \Zl[\tfrac{1}{d}]$ as follows: Let $p,q \in M_N(\E^{\infty})$ be projections. Then 
\[
	\tau_*([p] - [q]) = (\Tr_{N} \otimes \tau)(p) - (\Tr_{N} \otimes \tau)(q)\ ,
\]
where $\Tr_{N} \otimes \tau \colon M_N(\E^{\infty}) = M_N(\Cl) \otimes \E^{\infty} \to \Cl$ is induced by the non-normalized trace $\Tr_N$ tensored with $\tau$. This is in fact a ring isomorphism. To understand the ring structure on $K_0(\E^{\infty})$, note that $\E^{\infty}$ is strongly self-absorbing \cite[Ex.~1.14]{TomsWinter:2007}. In particular, there is an isomorphism $\psi \colon \E^{\infty} \otimes \E^{\infty} \to \E^{\infty}$ and any two such isomorphisms are homotopic. Let $p_i \in M_{N_i}(\E^{\infty})$ for $i \in \{1,2\}$ be projections and let $[p_i] \in K_0(\E^{\infty})$ be the corresponding $K$-theory classes. Let $\psi' \colon M_{N_1}(\E^{\infty}) \otimes M_{N_2}(\E^{\infty}) \to M_{N_1N_2}(\E^{\infty})$ be the isomorphism induced by $\psi$. Then we have $[p_1] \cdot [p_2] = [\psi'(p_1 \otimes p_2)]$. It follows from the uniqueness of the normalized trace on $\E^{\infty}$ that 
\[
	(\Tr_{N_1} \otimes \tau) \otimes (\Tr_{N_2} \otimes \tau) = (\Tr_{N_1N_2} \otimes \tau) \circ \psi'\ ,
\]
which implies $\tau_*([p_1] \cdot [p_2]) = \tau_*([p_1]) \cdot \tau_*([p_2])$.

Note that $\E^{\infty} \subset \E$ with $\E$ as in Section~\ref{section:subfactors}. The inductive limit of the representations $\rho_R^{(n)} \colon C^*S_n \to \End(V^{\otimes n})$ provides us with a ${}^*$-homomorphism 
\begin{equation} \label{eqn:rho_R}
	\rho_R \colon C^*S_\infty \to \E^{\infty}\ .
\end{equation}

The $K$-theory of $C^*S_\infty$ was studied by Kerov and Vershik in
\cite{VershikKerov:1983}. In particular, they obtained that
$K_0(C^*S_\infty)$ is isomorphic to a quotient of the ring of
symmetric functions. As an abelian group it is therefore spanned by projections $p_{\lambda} \in K_0(C^*S_\infty)$, that are labeled by partitions $\lambda =\partn{\lambda_1, \dots, \lambda_k}$ of natural numbers $n \in \Nl$. The map $\rho_R$ induces a group homomorphism 
\begin{equation} \label{eqn:K0invariant}
	\rho_{R*} \colon K_0(C^*S_\infty) \to K_0(\E^{\infty})
\end{equation}
in $K$-theory. Using the ring isomorphism induced by the unique trace on $\E^{\infty}$ we will identify $K_0(\E^{\infty})$ with $\Zl[\tfrac{1}{d}]$. The following lemma shows that $\rho_{R*}$ remembers the equivalence class of $R$.
\begin{lemma} \label{lem:rho_R_multiplicities}
	Let $\lambda$ be a partition of $n \in \Nl$. We will identify $\lambda$ with the corresponding irreducible representation of $S_n$. On the projection $p_{\lambda} \in C^*S_\infty$ associated to $\lambda$ the value of $\rho_{R*}$ is given by 
	\[
		\rho_{R*}([p_{\lambda}])= \frac{1}{d^n} \langle \lambda, \rho^{(n)}_R \rangle\ ,
	\] 
	where $\langle \lambda, \mu \rangle$ denotes the multiplicity of the irreducible representation $\lambda$ in the representation $\mu$. 
\end{lemma}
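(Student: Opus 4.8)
The plan is to push everything down to the finite level $n$ at which $\lambda$ lives, where the $K$-theory class becomes, via the trace, a normalised rank, and then to identify that rank with the multiplicity $\langle\lambda,\rho_R^{(n)}\rangle$ using the Wedderburn structure of $\Cl[S_n]$. First I would recall that the class $p_\lambda\in K_0(C^*S_\infty)$, $\lambda\vdash n$, is the image of $[e_\lambda]$ under the map $K_0(\Cl[S_n])\to K_0(C^*S_\infty)$ induced by $\Cl[S_n]\subset C^*S_\infty$, where $e_\lambda$ is a minimal projection in the block $M_{d_\lambda}(\Cl)$ of the decomposition $\Cl[S_n]\cong\bigoplus_{\mu\vdash n}M_{d_\mu}(\Cl)$ indexed by $\lambda$. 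Since $\rho_R$ restricted to $\Cl[S_n]$ equals $\rho_R^{(n)}$ followed by the structure map $\iota_n\colon\End(V\tp{n})\hookrightarrow\E^\infty$, functoriality of $K_0$ gives $\rho_{R*}([p_\lambda])=(\iota_n)_*(\rho_R^{(n)})_*([e_\lambda])$, so the whole computation may be carried out inside $\End(V\tp{n})=M_{d^n}(\Cl)$.

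The main step is then the trace computation. Because $\iota_n$ is trace-preserving (both algebras carry the normalised trace, which on $\End(V\tp{n})$ is $d^{-n}\Tr_{V\tp{n}}$), the identification $\tau_*$ sends the class of a projection to its normalised trace, so that
\[
\rho_{R*}([p_\lambda])=\tau_*\big([\rho_R^{(n)}(e_\lambda)]\big)=d^{-n}\,\Tr_{V\tp{n}}\big(\rho_R^{(n)}(e_\lambda)\big).
\]
It remains to identify $\Tr_{V\tp{n}}(\rho_R^{(n)}(e_\lambda))$, the rank of the projection $\rho_R^{(n)}(e_\lambda)$, with $\langle\lambda,\rho_R^{(n)}\rangle$. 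Writing $\rho_R^{(n)}\cong\bigoplus_{\mu\vdash n}\mu^{\oplus m_\mu}$ with $m_\mu=\langle\mu,\rho_R^{(n)}\rangle$, the block $M_{d_\lambda}(\Cl)$ acts on the $\lambda$-isotypic subspace $\Cl^{d_\lambda}\otimes\Cl^{m_\lambda}$ of $V\tp{n}$ as $M_{d_\lambda}(\Cl)\otimes\id_{m_\lambda}$ and annihilates the other isotypic components; hence the minimal (rank-one) projection $e_\lambda$ is carried to a rank-one projection on $\Cl^{d_\lambda}$ tensored with $\id_{m_\lambda}$, of rank $1\cdot m_\lambda=\langle\lambda,\rho_R^{(n)}\rangle$. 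Substituting this yields the claim.

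I expect the genuinely delicate points to be bookkeeping rather than substance. One must ensure that the representative of $p_\lambda$ is really a minimal projection and not the central projection $z_\lambda$ of the whole $\lambda$-block, since the latter would produce rank $d_\lambda\,m_\lambda$ instead of $m_\lambda$; this is fixed by the Kerov--Vershik normalisation already in force. I would also verify that the answer does not depend on the level $n$ chosen to represent $\lambda$: passing from $n$ to $n+1$ replaces $[e_\lambda]$ by $\sum_{\Lambda}[e_\Lambda]$, summed over the $\Lambda\vdash n+1$ obtained by adding a box to $\lambda$ (the branching rule on $K_0$), and the identity $d\,\langle\lambda,\rho_R^{(n)}\rangle=\sum_\Lambda\langle\Lambda,\rho_R^{(n+1)}\rangle$ — which follows from $V\tp{(n+1)}\cong(V\tp{n})^{\oplus d}$ as an $S_n$-representation together with the branching rule — shows the formula is compatible with it. This consistency check, though not strictly needed to evaluate $\rho_{R*}$ on a single $p_\lambda$, is what makes the statement well posed, and is the one place I anticipate needing a small genuine argument.
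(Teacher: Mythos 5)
Your proposal is correct and follows essentially the same route as the paper: reduce to the finite level using that the inclusion $\End(V^{\otimes n})\subset\E^{\infty}$ is trace-preserving, so $\rho_{R*}([p_\lambda])=d^{-n}\Tr_{V^{\otimes n}}(\rho_R^{(n)}(p_\lambda))$, and then identify this trace with the multiplicity by observing that $p_\lambda$ is a minimal (rank-one) projection in the $\lambda$-block, so its image has rank $\langle\lambda,\rho_R^{(n)}\rangle$ on the isotypic decomposition of $V^{\otimes n}$. Your additional branching-rule consistency check across levels is a valid extra remark but is not needed, since the formula is evaluated at the canonical level $n=|\lambda|$.
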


\begin{proof} \label{pf:rho_R_multiplicities}
We have $\rho_{R*}([p_{\lambda}]) = \tau(\rho_R(p_{\lambda}))$. Let $\tau_n \colon \End(V^{\otimes n}) \to \Cl$ be the normalized trace. Since $p_{\lambda} \in C^*S_n \subset C^*S_\infty$ and the inclusion $\End(V^{\otimes n}) \to \E^{\infty}$ preserves the normalized trace, we obtain
\[
	\tau(\rho_R(p_{\lambda})) = \tau_n(\rho_R^{(n)}(p_{\lambda})) = \frac{1}{d^n}\,\Tr_{V\tp{n}}(\rho_R^{(n)}(p_{\lambda}))\,.
\]
Let $V_R = V^{\otimes n}$ be the representation space of $\rho_R^{(n)}$. The decomposition into its irreducible components gives
\[
	V_R \cong \bigoplus_{\mu \in \text{Irrep}(S_n)} \hom_{C^*S_n}(V_{\mu}, V_R) \otimes V_{\mu}\ ,
\]
where the action on the left is via $\rho_R^{(n)}$ and on the right acts only on the second tensor factor $V_{\mu}$ via $\mu$. Observe that $p_{\lambda}V_{\mu}$ is zero for $\lambda \neq \mu$ and $1$-dimensional for $\lambda = \mu$. Hence, 
\begin{align*}
	\Tr_{V\tp{n}}(\rho_R^{(n)}(p_{\lambda})) &= \dim(\rho_R^{(n)}(p_{\lambda})V_R) = \dim(\hom_{C^*S_n}(V_{\lambda},V_R) \otimes p_{\lambda}V_{\lambda}) \\
	&=  \dim(\hom_{C^*S_n}(V_{\lambda},V_R)) = \langle \lambda, \rho_R^{(n)} \rangle\ . \qedhere
\end{align*}
\end{proof}

From this we obtain two useful additional characterizations of the equivalence relation from Def.~\ref{Definition:R0-Equivalence}, one of them $K$-theoretic, the other one $C^*$-algebraic. For the second one we need the following equivalence relation \cite[Def.~1.1.15]{RoerdamStoermer:2002}:
\begin{definition} \label{def:approx_unitary_eq}
	Let $\varphi, \psi \colon A \to B$ be ${}^*$-homomorphisms between separable unital $C^*$-algebras $A$ and $B$. We call them \emph{approximately unitarily equivalent} if there is a sequence of unitaries $u_n \in B$ with the property that for all $a \in A$ we have 
	\[
		\lim_{n \to \infty} \lVert \varphi(a) - u_n\,\psi(a)\,u_n^* \rVert = 0\ .
	\]
	We denote this by $\varphi \approx_u \psi$.
\end{definition}  
\begin{theorem} \label{thm:equiv_relations}
Let $R,S \in \R_0(V)$. The following are equivalent:
\begin{enumerate}
	\item \label{it:our} $R \sim S$,
	\item \label{it:KT} $\rho_{R*} = \rho_{S*}$,
	\item \label{it:au} $\rho_R \approx_u \rho_S$.
\end{enumerate}
\end{theorem}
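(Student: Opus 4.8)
The plan is to prove the equivalences by establishing $\ref{it:our}\Leftrightarrow\ref{it:KT}$ first, and then $\ref{it:KT}\Leftrightarrow\ref{it:au}$, using the $K$-theoretic invariant $\rho_{R*}$ as the central object tying the three conditions together.

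For the equivalence $\ref{it:our}\Leftrightarrow\ref{it:KT}$, the key tool is Lemma~\ref{lem:rho_R_multiplicities}, which computes $\rho_{R*}([p_\lambda]) = d^{-n}\langle\lambda,\rho_R^{(n)}\rangle$ for each partition $\lambda$ of $n$. Since the classes $[p_\lambda]$ span $K_0(C^*S_\infty)$ as an abelian group (by the Kerov--Vershik description recalled above), the homomorphism $\rho_{R*}$ is completely determined by these multiplicities. First I would observe that $R\sim S$ forces $\dim V=\dim V'=d$, so both invariants land in the same ring $\Zl[\tfrac1d]$. Then $R\sim S$ means precisely that $\rho_R^{(n)}\cong\rho_S^{(n)}$ as $S_n$-representations for every $n$, which is equivalent to $\langle\lambda,\rho_R^{(n)}\rangle=\langle\lambda,\rho_S^{(n)}\rangle$ for all $\lambda$. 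Via Lemma~\ref{lem:rho_R_multiplicities} this says exactly $\rho_{R*}([p_\lambda])=\rho_{S*}([p_\lambda])$ for all $\lambda$, hence $\rho_{R*}=\rho_{S*}$. The converse reverses this reasoning, provided one knows the two invariants carry the same dimension; this is automatic since $\rho_{R*}$ lands in $\Zl[\tfrac1d]$ and evaluating on the class of a rank-one projection in $C^*S_1=\Cl$ recovers $d^{-1}$, so $d$ is read off from the image ring.

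For $\ref{it:au}\Rightarrow\ref{it:KT}$, I would invoke the standard fact that approximately unitarily equivalent ${}^*$-homomorphisms induce the same map on $K_0$: unitary conjugation acts trivially on $K$-theory, and the $K_0$-functor is continuous with respect to the pointwise-norm topology on homomorphisms into a fixed $C^*$-algebra (projections lift and their classes are locally constant under small perturbations). Thus $\rho_R\approx_u\rho_S$ gives $\rho_{R*}=\rho_{S*}$ directly.

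The hard direction, and the main obstacle, is $\ref{it:KT}\Rightarrow\ref{it:au}$: deducing approximate unitary equivalence of the homomorphisms from equality of their $K_0$-invariants. The plan here is to use a classification/uniqueness theorem for ${}^*$-homomorphisms into the strongly self-absorbing UHF-algebra $\E^\infty$. Since $\E^\infty$ has unique trace $\tau$ inducing the isomorphism $\tau_*\colon K_0(\E^\infty)\xrightarrow{\sim}\Zl[\tfrac1d]$, two homomorphisms $\rho_R,\rho_S\colon C^*S_\infty\to\E^\infty$ that agree on $K_0$ necessarily induce the same tracial state $\chi_R=\tau\circ\rho_R=\tau\circ\rho_S=\chi_S$ on $C^*S_\infty$. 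The crucial step is then to appeal to a uniqueness result (in the spirit of the classification of homomorphisms from an AF- or residually-finite-dimensional algebra into a UHF-algebra) asserting that, for homomorphisms into $\E^\infty$, agreement of the induced traces together with the $K_0$-data implies approximate unitary equivalence. Concretely, both $\rho_R$ and $\rho_S$ factor through the finite-dimensional algebras $\Cl[S_n]$, and on each $\Cl[S_n]$ the induced maps into $\End(V\tp n)\subset\E^\infty$ have unitarily equivalent images precisely because the multiplicities $\langle\lambda,\rho_R^{(n)}\rangle=\langle\lambda,\rho_S^{(n)}\rangle$ agree (from $\ref{it:KT}$ via Lemma~\ref{lem:rho_R_multiplicities}). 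Choosing inner unitaries $u_n\in\E^\infty$ implementing these finite-stage equivalences and checking that they can be arranged coherently so that $\lVert\rho_R(a)-u_n\rho_S(a)u_n^*\rVert\to0$ for each $a\in C^*S_\infty$ yields $\rho_R\approx_u\rho_S$. The delicate point I expect to require care is the coherence/convergence of the $u_n$ across the inductive limit, which is exactly where strong self-absorption of $\E^\infty$ and uniqueness of its trace do the essential work.
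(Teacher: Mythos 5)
Your proof is correct, and for $i)\Leftrightarrow ii)$ it coincides with the paper's: both rest on Lemma~\ref{lem:rho_R_multiplicities} together with the fact that the classes $[p_\lambda]$ span $K_0(C^*S_\infty)$, so that $\rho_{R*}=\rho_{S*}$ is the same as equality of all multiplicities $\langle\lambda,\rho_R^{(n)}\rangle$. For $ii)\Leftrightarrow iii)$ the paper simply observes that $C^*S_\infty$ and $\E^\infty$ are AF-algebras and cites the standard classification result \cite[Prop.~1.3.4]{RoerdamStoermer:2002}, whereas you sketch a direct construction. Your direct route does work, and is in fact easier than you anticipate: since $\rho_R|_{\Cl[S_n]}$ and $\rho_S|_{\Cl[S_n]}$ are unitarily equivalent finite-dimensional representations on the same space $V^{\otimes n}$, there is a unitary $u_n\in\End(V^{\otimes n})\subset\E^{\infty}$ with $\rho_R(a)=u_n\,\rho_S(a)\,u_n^*$ \emph{exactly} for all $a\in\Cl[S_m]$ with $m\leq n$ (the finite stages are compatible with the inclusions because $\rho_R^{(n)}|_{\Cl[S_m]}=\rho_R^{(m)}\otimes \id^{\otimes(n-m)}$). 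Hence for every $a$ in the dense subalgebra $\bigcup_m\Cl[S_m]$ the quantity $\lVert\rho_R(a)-u_n\,\rho_S(a)\,u_n^*\rVert$ is eventually zero, and a $3\varepsilon$-argument extends this to all of $C^*S_\infty$; no coherence condition on the $u_n$ is required, and neither strong self-absorption of $\E^{\infty}$ nor uniqueness of its trace enters at this point. One small inaccuracy, harmless here because the hypothesis already places $R$ and $S$ on the same base space $V$: the dimension $d$ cannot be read off from $\rho_{R*}$ in the way you suggest --- the class of the unit of $C^*S_1$ maps to $\tau(1)=1$ rather than $d^{-1}$, and the ring $\Zl[\tfrac{1}{d}]$ only remembers the prime divisors of $d$.
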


\begin{proof} \label{pf:equiv_relations}
	The equivalence of $i)$ and $ii)$ is a consequence of Lemma~\ref{lem:rho_R_multiplicities} and the fact that $\rho_R^{(n)}$ and $\rho_S^{(n)}$ are unitarily equivalent if and only if the multiplicities of their irreducible subrepresentations agree. 
	
	To see that $ii)$ and $iii)$ are equivalent, note that the $C^*$-algebras $C^*S_\infty$ and $\E^{\infty}$ are both AF-algebras. The statement then follows from \cite[Prop.~1.3.4]{RoerdamStoermer:2002}.
\end{proof}

The $K$-group $K_0(C^*S_\infty)$ is in fact a ring: Let $\lambda$ be a partition of $n \in \Nl$ and let $\mu$ be a partition of $m \in \Nl$. Denote by $p_{\lambda}, p_{\mu} \in C^*S_\infty$ the associated projections. Let $\iota_{n,m} \colon C^*S_n \otimes C^*S_m \to C^*S_{n + m}$ be the ${}^*$-homomorphism induced by the inclusion $S_n \times S_m \to S_{n+m}$, where $S_n$ permutes the first $n$ elements and $S_m$ the last $m$ elements. The product $[p_{\lambda}] \cdot [p_{\mu}]$ is then defined to be the class of the projection $\iota_{n,m}(p_{\lambda} \otimes p_{\mu})\in C^*S_{n+m}\subset C^*S_\infty$ in $K_0(C^*S_\infty)$. With respect to this ring structure we make the following observation:

\begin{proposition} \label{thm:rho_R_ring_hom}
Let $R \in \R_0(V)$. Then the associated $K$-theory invariant 
\[
	\rho_{R*} \colon K_0(C^*S_\infty) \to \Zl[\tfrac{1}{d}]
\]
is a ring homomorphism.
\end{proposition}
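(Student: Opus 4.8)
The plan is to use that $\rho_{R*}$ is already a group homomorphism \eqref{eqn:K0invariant} and that, as an abelian group, $K_0(C^*S_\infty)$ is spanned by the classes $[p_\lambda]$. Multiplication in both $K_0(C^*S_\infty)$ and $\Zl[\tfrac{1}{d}]$ is bi-additive, so it suffices to verify that $\rho_{R*}([p_\lambda]\cdot[p_\mu]) = \rho_{R*}([p_\lambda])\cdot\rho_{R*}([p_\mu])$ on generators, together with unitality; the general case then follows by extending bilinearly. Throughout I would use, as in the proof of Lemma~\ref{lem:rho_R_multiplicities}, that for a projection $p \in C^*S_\infty$ one has $\rho_{R*}([p]) = \tau(\rho_R(p))$, so that only the trace of $\rho_R$ on the relevant projection has to be computed.

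The central identity to establish is that $\rho_R$ intertwines the induction map $\iota_{n,m}$ with the tensor product of the finite-dimensional representations: for $x \in C^*S_n$ and $y \in C^*S_m$,
\[
	\rho_R\bigl(\iota_{n,m}(x \ot y)\bigr) = \rho_R^{(n)}(x) \ot \rho_R^{(m)}(y)
\]
under the canonical identification $\End(V\tp{(n+m)}) = \End(V\tp{n}) \ot \End(V\tp{m})$. This follows directly from $\rho_R(\sigma_i) = R_i$: the generators $\sigma_1,\ldots,\sigma_{n-1}$ of the first factor $S_n$ are sent to $R_1,\ldots,R_{n-1}$, which act only on the first $n$ tensor legs, while the embedding $S_m \hookrightarrow S_{n+m}$ shifts the generators of the second factor to $\sigma_{n+1},\ldots,\sigma_{n+m-1}$, which $\rho_R$ sends to $R_{n+1},\ldots,R_{n+m-1}$, acting only on the last $m$ legs. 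Since these two blocks of generators act on disjoint tensor factors, the homomorphism property yields the claimed factorization. I expect this to be the main (though still mild) obstacle, as it amounts to carefully matching the shift of generators under $\iota_{n,m}$ with the block structure of the tensor decomposition.

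Granting this, I would apply the identity to $p_\lambda \ot p_\mu$: since $\iota_{n,m}(p_\lambda \ot p_\mu)$ is a projection in $C^*S_{n+m}$, its image $\rho_R^{(n)}(p_\lambda) \ot \rho_R^{(m)}(p_\mu)$ lies in $\End(V\tp{(n+m)})$, and the normalized trace factorizes over the tensor decomposition, $\tau_{n+m}(A \ot B) = \tau_n(A)\,\tau_m(B)$. Hence
\[
	\rho_{R*}([p_\lambda]\cdot[p_\mu]) = \tau_n\bigl(\rho_R^{(n)}(p_\lambda)\bigr)\,\tau_m\bigl(\rho_R^{(m)}(p_\mu)\bigr) = \rho_{R*}([p_\lambda])\cdot\rho_{R*}([p_\mu]),
\]
where the product on the right is ordinary multiplication in $\Zl[\tfrac{1}{d}]$, which coincides with the ring structure on $K_0(\E^\infty)$ since $\tau_*$ is a ring isomorphism. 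For unitality I would note that the unit of the induction ring $K_0(C^*S_\infty)$ is $[p_\emptyset]$, the class of the unit of $C^*S_0 = \Cl$, which $\rho_R$ sends to $1 \in \E^\infty$; thus $\rho_{R*}([p_\emptyset]) = \tau(1) = 1$. Extending multiplicativity from the generators to all of $K_0(C^*S_\infty)$ by bi-additivity then completes the proof.
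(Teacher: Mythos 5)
Your proposal is correct and follows essentially the same route as the paper's proof: the key identity $\rho_R^{(n+m)}\circ\iota_{n,m}=\rho_R^{(n)}\otimes\rho_R^{(m)}$ (which the paper asserts with less detail than your generator-matching argument) combined with the multiplicativity of the normalized trace over tensor factors. Your added remarks on reducing to generators by bi-additivity and on unitality are harmless elaborations of points the paper leaves implicit.
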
 

\begin{proof} \label{pf:rho_R_ring_hom}
Let $\lambda, \mu$ be partitions of $n,m \in \Nl$ respectively. Let $p_{\lambda} \in C^*S_n, p_{\mu} \in C^*S_m$ be the corresponding projections. Since the representations $\rho_R^{(n)}$ arise from the same R-matrix, we have $\rho_R^{(n+m)} \circ \iota_{n,m} = \rho_R^{(n)} \otimes \rho_R^{(m)}$. Hence, we obtain
\[
\rho_{R*}([p_{\lambda}]\cdot [p_{\lambda}]) = [\rho_R^{(n+m)}\circ \iota_{n,m}(p_{\lambda} \otimes p_{\mu})] = [\rho_R^{(n)}(p_{\lambda}) \otimes \rho_R^{(m)}(p_{\mu})]
\]
and after application of the isomorphism $\tau_* \colon K_0(\E^{\infty}) \to \Zl[\tfrac{1}{d}]$ induced by the trace:
\begin{align*}
	\tau_*(\rho_{R*}([p_{\lambda}]\cdot [p_{\lambda}])) 
	&= \tau_{n+m}(\rho_R^{(n)}(p_{\lambda}) \otimes \rho_R^{(m)}(p_{\mu})) \\ 
	&= \tau_n(\rho_R^{(n)}(p_{\lambda}))\,\tau_m(\rho_R^{(m)}(p_{\mu}))= \tau_*(\rho_{R*}([p_{\lambda}])) \cdot \tau_*(\rho_{R*}([p_{\mu}]))
\end{align*}
where $\tau_r \colon \End(V^{\otimes r}) \to \Cl$ denotes the (normalized) trace on the matrix algebra and we used that the inclusion $\End(V^{\otimes r}) \subset \E^{\infty}$ is trace preserving.
\end{proof}

Finite traces on $K_0(C^*S_\infty)$ have been studied by Kerov and Vershik in \cite{VershikKerov:1983} and $\rho_{R*}$ can be seen as a refinement of such a trace taking values in $\Zl[\tfrac{1}{d}]$. In particular, we recover the multiplicativity proven in \cite[Thm.~2.3]{VershikKerov:1983} in Prop.~\ref{thm:rho_R_ring_hom}. From the point of view of $C^*$-algebras it is remarkable that we obtain a ring homomorphism on $K$-theory that is induced by a ${}^*$-homomorphism. 

\subsection{R-matrices, $K$-theory and symmetric functions}
\label{section:symmfunctions}

  Before discussing the connections between R-matrices, $K$-theory and symmetric functions in greater detail, we first collect some facts about symmetric functions to fix notation. We refer readers unfamiliar with symmetric functions to Macdonald's book \cite{Macdonald:1995}.
  
  The ring of symmetric functions, $\symm$, admits numerous free generators. Here, the most important are:
  \begin{enumerate}
  \item Elementary symmetric functions:
    \begin{displaymath}
      \elsym{k}=\sum_{1\leq i_1<i_2<\cdots<i_k} x_{i_1}x_{i_2}\cdots x_{i_k},\quad k\geq1.
    \end{displaymath}
  \item Complete symmetric functions:
    \begin{displaymath}
      \comsym{k}=\sum_{1\leq i_1\leq i_2\leq \cdots\leq i_k} x_{i_1}x_{i_2}\cdots x_{i_k},\quad k\geq1.
    \end{displaymath}
  \item Power sums:
    \begin{displaymath}
      \powsum{k}=\sum_{i\geq1}x_i^k.
    \end{displaymath}
  \end{enumerate}
  The ring of symmetric functions also admits many interesting bases usually indexed by partitions of integers. For example the above three sets of generators each define a basis by having the basis vector associated to a partition \(\lambda=\partn{\lambda_1,\lambda_2,\dots}\) be \(f_\lambda= f_{\lambda_1}f_{\lambda_2}\cdots\), where \(f\) is either \(\elsym{},\comsym{}\) or \(\powsum{}\) and one defines \(f_0=1\). An additional important basis is given by Schur functions \(\schur{\lambda}\), which in terms of elementary and complete symmetric functions are given by the following determinantal formulae:
  \begin{displaymath}
    \schur{\lambda}=\det\left(\comsym{\lambda_i-i+j}\right)_{1\leq i,j\leq n}
      =\det\left(\elsym{\lambda_i'-i+j}\right)_{1\leq i,j\leq m},
  \end{displaymath}
  where \(n\geq \ell(\lambda)\), \(\lambda'\) is the partition conjugate to \(\lambda\) and \(m\geq \ell(\lambda')\).

The ring of symmetric functions also admits a ring involution \(\omega:\symm\to\symm\) which on the generators and bases defined above acts as
\begin{align*}
  \omega(\elsym{k})=\comsym{k},\quad
  \omega(\comsym{k})=\elsym{k},\quad
  \omega(\powsum{k})=(-1)^{k+1}\powsum{k},\quad
  \omega(\schur{\lambda})=\schur{\lambda'}.
\end{align*}
Finally, we will also make use of the coproduct \(\Delta:\symm\to\symm\ot\symm\) which maps a symmetric function \(f(x)\) to the same function \(f(x,y)\) but with the alphabet of variables split into two alphabets.

The images of the sets of generators defined above are then
\begin{align*}
  \Delta(\elsym{k})&=\sum_{q=0}^k\felsym{q}{x}\felsym{k-q}{y},\ 
  \Delta(\comsym{k})=\sum_{q=0}^k\fcomsym{q}{x}\fcomsym{k-q}{y},\\
  \Delta(\powsum{k})&=\fpowsum{k}{x}+\fpowsum{k}{y}.
\end{align*}
The corresponding formulae for Schur functions are more involved but can be
derived from their determinantal expressions in terms of elementary or
complete symmetric functions.

\medskip

As mentioned at the end of Sect.~\ref{section:Young}, the elementary symmetric functions and their coproducts can be used to define a \(\lambda\)-operation on rescaled Thoma parameters \((a,b)\). 
Denote the \(\lambda^n\) operation on \((a,b)\) by
\(\lambda^n(a,b)=(\lambda^n a,\lambda^n b)\), where \(\lambda^n a\) and
\(\lambda^n b\) are the non-decreasing arrangements of
\begin{align*}
  \lambda^n a&=\{\text{monomial summands of } \felsym{n}{a,b} \text{ with even number of factors from }b\},\\
  \lambda^n b&=\{\text{monomial summands of } \felsym{n}{a,b} \text{ with odd number of factors from }b\}.
\end{align*}
For example if \((a,b)=(\partn{a_1,a_2},\partn{b_1})\),
\begin{align*}
  \felsym{0}{a,b}&=1,\quad
  \felsym{1}{a,b}=a_1+a_2+b_1,\quad
  \felsym{2}{a,b}=a_1a_2+a_1b_1+a_2b_1,\\
  \felsym{3}{a,b}&=a_1a_2b_1,\quad
  \felsym{n}{a,b}=0,\ n\geq4.
\end{align*}
Thus,
\begin{align*}
  \lambda^0(a,b)&=(\partn{1},\emptyset),\quad
  \lambda^1(a,b)=(\partn{a_1,a_2},\partn{b_1}),\quad
  \lambda^2(a,b)=(\partn{a_1a_2},\partn{a_1b_1,a_2b_1}),\\
  \lambda^3(a,b)&=(\emptyset,\partn{a_1a_2b_1}),\quad
  \lambda^n(a,b)=(\emptyset,\emptyset),\ n\geq4.
\end{align*}

\begin{lemma}
  The operation \(\lambda^n\) is a \(\lambda\)-operation on $(\Rquot\cup\{(\emptyset,\emptyset)\},\boxplus,\boxtimes)$.
\end{lemma}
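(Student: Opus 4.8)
The plan is to verify directly the three defining properties of a $\lambda$-operation on the semiring $(\Rquot\cup\{(\emptyset,\emptyset)\},\boxplus,\boxtimes)$: that $\lambda^0(x)$ equals the multiplicative unit $[1_1]=(\partn{1},\emptyset)$, that $\lambda^1(x)=x$, and that the exponential law $\lambda^n(x\boxplus y)=\bigboxplus_{i+j=n}\lambda^i(x)\boxtimes\lambda^j(y)$ holds. The device that makes the bookkeeping transparent is to write a pair $(a,b)$ as the formal $\Nl$-combination $\sum_i[a_i]_+ + \sum_j[b_j]_-$ of \emph{line elements} $[w]_+=(\partn{w},\emptyset)$ and $[w]_-=(\emptyset,\partn{w})$. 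In this language the rule $(a,b)\boxplus(a',b')=(a\cup a',b\cup b')$ says that $\boxplus$ is simply addition of these combinations, while Lemma~\ref{lemma:tensorproduct} says that $\boxtimes$ is the convolution determined on line elements by $[w]_\sigma\boxtimes[w']_{\sigma'}=[ww']_{\sigma\sigma'}$ (weights multiply, signs multiply). Since products of positive integers are positive integers, $\lambda^n(x)$ always has nonnegative-integer parts and therefore does land in $\Rquot\cup\{(\emptyset,\emptyset)\}$, so the operation is well defined.

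Next I would package the operations into the semiring-valued generating series $\lambda_t(a,b)\deq\bigboxplus_{n\ge0}\lambda^n(a,b)\,t^n$ and establish the product formula
\[
	\lambda_t(a,b)=\Big(\prod_i\,([1_1]\boxplus t\,[a_i]_+)\Big)\boxtimes\Big(\prod_j\,([1_1]\boxplus t\,[b_j]_-)\Big),
\]
where all products are $\boxtimes$-products in the semiring of formal power series in $t$. This is nothing but the definition of $\lambda^n$ unpacked: expanding the right-hand side, the coefficient of $t^n$ is the $\boxtimes$-product of a choice of $n$ distinct line elements drawn from the combined list, that is, a square-free degree-$n$ monomial summand of $\felsym{n}{a,b}$, carrying the sign $+$ or $-$ according to whether an even or odd number of its factors come from the $b$-part. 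By the convolution rule this is exactly the split of $\felsym{n}{a,b}$ into $\lambda^n a$ and $\lambda^n b$. Reading off the coefficients of $t^0$ and $t^1$ then gives $\lambda^0(a,b)=[1_1]$ and $\lambda^1(a,b)=(a,b)$, disposing of the first two axioms.

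The exponential law follows at once, since the line elements of $x\boxplus y$ are those of $x$ together with those of $y$ and $\boxtimes$ is commutative and associative: the product formula gives $\lambda_t(x\boxplus y)=\lambda_t(x)\boxtimes\lambda_t(y)$, and extracting the coefficient of $t^n$, using that $\boxtimes$ distributes over $\boxplus$, yields $\lambda^n(x\boxplus y)=\bigboxplus_{i+j=n}\lambda^i(x)\boxtimes\lambda^j(y)$. Equivalently, one can prove the product formula one part at a time from the coproduct identity $\Delta(\elsym{n})=\sum_q\felsym{q}{x}\felsym{n-q}{y}$ recalled in Sect.~\ref{section:symmfunctions}: adjoining a single $a$-part multiplies $\lambda_t$ by $[1_1]\boxplus t\,[w]_+$ and preserves all $b$-parities, whereas adjoining a single $b$-part multiplies by $[1_1]\boxplus t\,[w]_-$ and toggles them.

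There is no deep analytic or combinatorial obstacle here; the one point demanding genuine care is that we work in a \emph{semiring}, with no subtraction available, so every manipulation must be an honest identity of multisets of monomials. In particular one cannot isolate the even- and odd-$b$ components by the tempting trick of evaluating at the two characters $s=\pm1$ of $\Zl/2$ and forming half-sums, since that uses cancellation; this is precisely why I route the even/odd separation through the sign-carrying convolution on line elements (equivalently through $\Delta(\elsym{n})$). As a consistency check, the virtual power sums $\sum_i a_i^n+(-1)^{n+1}\sum_j b_j^n=d^n\chi(c_n)$ — the natural Adams operations of this structure — are additive under $\boxplus$ and multiplicative under $\boxtimes$ by the computations in Prop.~\ref{proposition:boxplus+thoma} and Lemma~\ref{lemma:tensorproduct}, in agreement with the $\lambda$-structure just described; and because every element is an $\Nl$-combination of line elements $[w]_\pm$, for which $\lambda^m$ vanishes when $m\ge2$, the operation is in fact special.
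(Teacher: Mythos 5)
Your proof is correct, but it takes a genuinely different route from the paper. The paper's own proof is a one-line appeal to the fact that $\symm$ is (the ring underlying) the free $\lambda$-ring on one generator, citing \cite{Yau:2010}, and simply asserts that the operation defined via $(1\ot\omega)\circ\Delta(\elsym{n})$ is ``essentially'' the usual one; it does not verify the axioms, nor does it address the two points your argument makes explicit. You instead check the three axioms $\lambda^0=[1_1]$, $\lambda^1=\id$, and $\lambda^n(x\boxplus y)=\bigboxplus_{i+j=n}\lambda^i(x)\boxtimes\lambda^j(y)$ directly, by decomposing $(a,b)$ into signed line elements $[w]_\pm$, observing that Lemma~\ref{lemma:tensorproduct} makes $\boxtimes$ the sign-multiplying convolution on these, and packaging everything into the product formula for $\lambda_t$. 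This buys several things the paper leaves implicit: well-definedness (the parts of $\lambda^n(a,b)$ are products of positive integers, hence again an element of $\Rquot\cup\{(\emptyset,\emptyset)\}$), compatibility of the even/odd parity split with the semiring structure without any use of subtraction, and the bonus observation that the $\lambda$-structure is special because every element is an $\Nl$-combination of line elements closed under $\boxtimes$. Your computation checks against the paper's worked example $(\partn{a_1,a_2},\partn{b_1})$, and the consistency remark about the power sums $\sum_i a_i^n+(-1)^{n+1}\sum_j b_j^n$ matches Prop.~\ref{proposition:boxplus+thoma} and Lemma~\ref{lemma:tensorproduct}. In short: the paper's proof is shorter but essentially a citation; yours is a self-contained verification that would also serve a reader wanting to see why the $\omega$-twist on the second tensor factor is exactly what makes the axioms hold.
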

\begin{proof}
  An alternate appellation for the ring of symmetric functions over the
  integers is the free \(\lambda\)-ring in one generator, or more precisely,
  the ring of symmetric functions over the integers is the ring underlying the
  free \(\lambda\)-ring in one generator. The \(\lambda\)-operation above is
  essentially that of the ring of symmetric functions. See for example, \cite[Chapt.~1.3]{Yau:2010}.
\end{proof}

Let $I \subset \symm$ be the ideal generated by $\elsym{1}-1$
and let $\qsymm = \symm/I$. Kerov and Vershik pointed out that the homomorphism $\theta \colon \qsymm \to K_0(C^*S_\infty)$ fixed by $\theta(\schur{\lambda}) = [p_{\lambda}]$ is in fact a ring isomorphism \cite{VershikKerov:1983}. Using this identification we can now completely determine the $K$-theory invariant $\rho_{R*}$ in terms of the Thoma parameters of $R$.

\begin{theorem} \label{thm:ptr_and_K}
Let $R \in \R_0(V)$ with Thoma parameters $(\alpha, \beta)$.  
Then we have 
\[
	\rho_{R*}(\theta(\elsym{k})) = \left[(1 \otimes \omega) \circ \Delta(\elsym{k})\right](\alpha,\beta)
\]
\end{theorem}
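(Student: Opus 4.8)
The plan is to show that both sides of the claimed identity arise by evaluating the \emph{same} ring homomorphism $\symm\to\Cl$ on the element $\elsym{k}$, and that this homomorphism is pinned down by its values on the power sums $\powsum{n}$.

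First I would compute the left-hand side. In $\symm$ one has the determinantal identity $\elsym{k}=\schur{(1^k)}$, and the column partition $(1^k)$ labels the sign (alternating) representation of $S_k$. Since $\theta(\schur{\lambda})=[p_\lambda]$, this gives $\theta(\elsym{k})=[p_{(1^k)}]$, and Lemma~\ref{lem:rho_R_multiplicities} yields
\[
  \rho_{R*}(\theta(\elsym{k}))
  =
  \frac{1}{d^k}\langle (1^k),\rho_R^{(k)}\rangle .
\]
Expressing the multiplicity of the sign representation through the character orthogonality relations and inserting $\Tr_{V\tp{k}}(\rho_R^{(k)}(\sigma))=d^k\chi_R(\sigma)$ (which follows from \eqref{eq:DefChiR} and \eqref{eq:tau}), I would rewrite this as a sum over $S_k$,
\[
  \rho_{R*}(\theta(\elsym{k}))
  =
  \frac{1}{k!}\sum_{\sigma\in S_k}\operatorname{sgn}(\sigma)\,\chi_R(\sigma)
  =
  \sum_{\mu\vdash k}\frac{\epsilon_\mu}{z_\mu}\,\chi_R(\mu),
\]
where $\epsilon_\mu=(-1)^{k-\ell(\mu)}$, $z_\mu$ is the order of the centralizer of cycle type $\mu$, and $\chi_R(\mu)=\prod_n\chi_R(c_n)^{m_n(\mu)}$ by Thoma multiplicativity (Thm.~\ref{theorem:ThomaMultiplicativity}).

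Next I would introduce the ring homomorphism $\phi\colon\symm\to\Cl$ determined on the free generators by $\phi(\powsum{n})=\chi_R(c_n)$. Using the classical power-sum expansion $\elsym{k}=\sum_{\mu\vdash k}\epsilon_\mu z_\mu^{-1}\powsum{\mu}$ \cite{Macdonald:1995} together with $\phi(\powsum{\mu})=\prod_n\phi(\powsum{n})^{m_n}=\chi_R(\mu)$, the displayed sum is exactly $\phi(\elsym{k})$, so the left-hand side equals $\phi(\elsym{k})$. It then remains to identify the right-hand side with $\phi$. The assignment $g\mapsto\big[(1\ot\omega)\circ\Delta(g)\big](\alpha,\beta)$ is a ring homomorphism $\symm\to\Cl$, being a composite of the coproduct $\Delta$, the ring homomorphism $1\ot\omega$, and the substitution of the two alphabets by $\alpha$ and $\beta$. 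On a generator $\powsum{n}$ it evaluates, via $\Delta(\powsum{n})=\fpowsum{n}{x}+\fpowsum{n}{y}$ and $\omega(\powsum{n})=(-1)^{n+1}\powsum{n}$, to $\fpowsum{n}{\alpha}+(-1)^{n+1}\fpowsum{n}{\beta}$, which is precisely $\chi_R(c_n)=\phi(\powsum{n})$ by Thoma's formula \eqref{eq:Thoma-Formula}.

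Since the power sums generate $\symm$ as a $\Ql$-algebra, two ring homomorphisms agreeing on all $\powsum{n}$ coincide; thus $\phi=\big[(1\ot\omega)\circ\Delta(-)\big](\alpha,\beta)$ as maps $\symm\to\Cl$, and in particular they agree on $\elsym{k}$. Combined with the first step this gives the asserted equality. The points requiring care are bookkeeping: the sign $\epsilon_\mu$ must match the sign in the power-sum expansion of $\elsym{k}$ (rather than of $\comsym{k}$), and one should check consistency with the passage to $\qsymm=\symm/(\elsym{1}-1)$, which is automatic since $\phi(\powsum{1})=\chi_R(c_1)=\sum_i\alpha_i+\sum_j\beta_j=1$ by Cor.~\ref{corollary:YB->gamma+finite}. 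I expect the main conceptual step -- the one that makes the whole computation transparent -- to be this reduction to an equality of ring homomorphisms on $\symm$ checked only on the power sums, rather than any single hard estimate.
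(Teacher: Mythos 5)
Your proof is correct, and it takes a genuinely different route from the one in the paper. The paper's proof is essentially a citation-plus-comparison: it invokes the Kerov--Vershik result that the generating function $g_R(z)=\sum_l\varphi(\elsym{l})z^l$ of the trace $\varphi=\tau_*\circ\rho_{R*}\circ\theta$ equals $\prod_i(1+\alpha_iz)/(1-\beta_iz)$, then expands $\sum_l[(1\otimes\omega)\circ\Delta(\elsym{l})](\alpha,\beta)\,z^l$ into the same product and compares coefficients. You instead stay entirely inside the paper's own machinery: Lemma~\ref{lem:rho_R_multiplicities} together with $\elsym{k}=\schur{[1^k]}$ and character orthogonality gives $\rho_{R*}(\theta(\elsym{k}))=\frac{1}{k!}\sum_{\sigma\in S_k}\operatorname{sgn}(\sigma)\chi_R(\sigma)$, Thoma multiplicativity and the expansion $\elsym{k}=\sum_{\mu\vdash k}\epsilon_\mu z_\mu^{-1}\powsum{\mu}$ identify this with $\phi(\elsym{k})$ for the ring homomorphism $\phi(\powsum{n})=\chi_R(c_n)$, and Thoma's formula~\eqref{eq:Thoma-Formula} shows the right-hand side is the same homomorphism evaluated on $\elsym{k}$. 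All the details check out, including the two points that actually need care: the sign $\epsilon_\mu$ matches because $(1^k)$ labels the sign representation (consistent with the paper's convention $\schur{[1^n]}=\elsym{n}$), and the $n=1$ case $\phi(\powsum{1})=1=\sum_i\alpha_i+\sum_j\beta_j$ is exactly Cor.~\ref{corollary:YB->gamma+finite}, which also handles the descent to $\qsymm=\symm/(\elsym{1}-1)$. What each approach buys: the paper's argument is shorter and places the result squarely in the Kerov--Vershik framework, while yours is self-contained (no appeal to \cite[eqs.~(11),(12)]{VershikKerov:1983}) and in effect reproves the Kerov--Vershik generating-function formula for Yang--Baxter characters as a byproduct.
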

\begin{proof} \label{pf:ptr_and_K}
	The generating function $g_R$ associated to the trace $\varphi = \tau_* \circ \rho_{R*} \circ \theta \colon \qsymm \to \Zl[\tfrac{1}{d}] \subset \Rl$ is given by  
	\[
		g_R(z) = \sum_{l = 0}^{\infty} \varphi(\elsym{l})z^l
	\]
	as described in \cite[eq.~(11)]{VershikKerov:1983} and is related to the Thoma parameters $(\alpha,\beta)$ as follows \cite[eq.~(12)]{VershikKerov:1983} (note that in the case at hand, $\gamma = 0$ and $N = \max\{n,m\}$ in the notation of \cite{VershikKerov:1983}):
	\[
		g_R(z) = \prod_{i = 1}^N \frac{1 + \alpha_i z}{1 - \beta_i z}\ .
	\] 
	Hence, the statement follows from the following computation and comparison of coefficients with $g_R(z)$:
	\begin{align*}
		\sum_{l = 0}^{\infty}\, [(1 \otimes \omega) \circ \Delta(\elsym{l})](\alpha,\beta)\,z^l
		=\ & \sum_{l = 0}^{\infty} \sum_{i + j = l} \felsym{i}{\alpha_1, \dots, \alpha_n}\,\fcomsym{j}{\beta_1, \dots, \beta_m}z^l \\
		=\ & \left(\sum_{i = 0}^{\infty} \felsym{i}{\alpha_1, \dots, \alpha_n}z^i\right) \left(\sum_{j = 0}^{\infty} \fcomsym{j}{\beta_1, \dots, \beta_m}z^j\right) \\
		=\ & \prod_{i = 1}^n (1 + \alpha_i z) \prod_{j = 1}^m \frac{1}{1 - \beta_j z} = \prod_{i = 1}^N \frac{1 + \alpha_i z}{1 - \beta_i z}\ .\qedhere
	\end{align*}
\end{proof}

An immediate consequence of the above theorem is that applying \(\rho_{R*}(\theta(-))\) to a power
sum \(\powsum{\lambda}\) is the same as evaluating the class function
\(\chi_R\) at a group element of cycle shape \(\lambda\). Moreover,
Lemma \ref{lem:rho_R_multiplicities}, Theorem \ref{thm:ptr_and_K}
and the fact that \(\theta(\schur{\lambda})=[p_\lambda]\) can now be used to
easily derive explicit formulae for the multiplicities of irreducible
representations of \(S_n\) in \(\rho_R^{(n)}\).
\begin{proposition}\label{Proposition:Multiplicities}
  Let \(R\in \R_0\) with rescaled
  Thoma parameters \((a,b)\), then the multiplicity of the \(S_n\)
  representation associated to a partition \(\lambda\) of \(n\) in
  \(\rho_R^{(n)}\) is
  \begin{equation}\label{eqn:multiplicities}
    \langle \lambda,
    \rho_R^{(n)}\rangle=\left[(1\otimes\omega)\circ\Delta(\schur{\lambda})\right](a,b). 
  \end{equation}
  Further let \(\ell(a),\ell(b)\) be the respective lengths of \(a\) and \(b\). Then 
  \(\langle \lambda, \rho_R^{(n)}\rangle=0\) if and only if the Young
  diagram of \(\lambda\) contains a rectangle of height $\ell(a)+1$ and width
  $\ell(b)+1$. If \(\lambda\) contains a rectangle of height \(\ell(a)\) and width
  \(\ell(b)\) (but not of respective height and width \(\ell(a)+1,\ell(b)+1\)), then
  \begin{displaymath}
    \langle \lambda,
    \rho_R^{(n)}\rangle=\fschur{\mu}{a}\fschur{\nu}{b}\prod_{i=1}^{\ell(a)}\prod_{j=1}^{\ell(b)}(a_i+b_j),
  \end{displaymath}
  where \(\mu,\nu\) are the partitions whose parts are \(\mu_i=\lambda_i-\ell(b),\
  i=1,\dots,\ell(b)\) and \(\nu_j=\lambda'_j-\ell(a),\ j=1,\dots,\ell(b)\).
\end{proposition}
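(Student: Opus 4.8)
The plan is to first prove the multiplicity formula \eqref{eqn:multiplicities} by a ring-homomorphism argument, and then to extract the vanishing criterion and the factorization from the standard theory of hook (super) Schur functions.

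For \eqref{eqn:multiplicities} I would compare two ring homomorphisms on $\qsymm$. First, $\tau_*\circ\rho_{R*}\circ\theta\colon\qsymm\to\Rl$ is a ring homomorphism, since $\theta$ is a ring isomorphism and $\rho_{R*}$ is a ring homomorphism by Prop.~\ref{thm:rho_R_ring_hom}. Second, the assignment $f\mapsto[(1\ot\omega)\circ\Delta(f)](\alpha,\beta)$ is a ring homomorphism $\symm\to\Rl$, being the composite of the coproduct $\Delta$, the ring automorphism $1\ot\omega$, and evaluation at the two finite alphabets $\alpha$ and $\beta$; because $[(1\ot\omega)\circ\Delta(\elsym1)](\alpha,\beta)=\sum_i\alpha_i+\sum_j\beta_j=1$ by Cor.~\ref{corollary:YB->gamma+finite}, it annihilates $\elsym1-1$ and descends to $\qsymm$. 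By Thm.~\ref{thm:ptr_and_K} these two homomorphisms agree on the algebra generators $\elsym k$, hence on all of $\qsymm$, in particular on $\schur\lambda$. Using $\theta(\schur\lambda)=[p_\lambda]$ and Lemma~\ref{lem:rho_R_multiplicities} this gives $d^{-n}\langle\lambda,\rho_R^{(n)}\rangle=[(1\ot\omega)\circ\Delta(\schur\lambda)](\alpha,\beta)$. Finally, $\schur\lambda$ is homogeneous of degree $n$, so every monomial occurring in $(1\ot\omega)\circ\Delta(\schur\lambda)$ has combined degree $n$ in the $\alpha$- and $\beta$-variables; passing to the rescaled parameters $(a,b)=(d\alpha,d\beta)$ of \eqref{eq:ab} therefore multiplies the value by $d^{n}$, which cancels $d^{-n}$ and yields \eqref{eqn:multiplicities}.

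Next I would identify the right-hand side of \eqref{eqn:multiplicities} with a hook Schur function. Expanding the coproduct in the Schur basis, $\Delta(\schur\lambda)=\sum_{\mu,\nu}c^\lambda_{\mu\nu}\,\schur\mu\ot\schur\nu$ with Littlewood--Richardson coefficients $c^\lambda_{\mu\nu}$, and using $\omega(\schur\nu)=\schur{\nu'}$ together with $\omega(\schur{\lambda/\mu})=\schur{(\lambda/\mu)'}$, one obtains
\begin{align*}
	\big[(1\ot\omega)\circ\Delta(\schur\lambda)\big](a,b)
	=\sum_{\mu}\fschur{\mu}{a}\,\fschur{(\lambda/\mu)'}{b},
\end{align*}
which is exactly the hook (super) Schur function evaluated in the $\ell(a)$ variables $a$ and the $\ell(b)$ variables $b$ (cf.\ \cite{Macdonald:1995}). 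Here $\fschur{\mu}{a}$ vanishes unless $\ell(\mu)\le\ell(a)$, and $\fschur{(\lambda/\mu)'}{b}$ vanishes unless every row of $\lambda/\mu$ has at most $\ell(b)$ cells. The classical vanishing criterion for hook Schur functions then shows that the whole sum is nonzero precisely when $\lambda_{\ell(a)+1}\le\ell(b)$, i.e.\ precisely when $\lambda$ does not contain the rectangle with $\ell(a)+1$ rows and $\ell(b)+1$ columns. This proves the second assertion.

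For the last assertion I would invoke the factorization of hook Schur functions over the $\ell(a)\times\ell(b)$ rectangle: when $\lambda$ contains this rectangle, the surviving terms force $\mu$ to contain it as well, and the super Schur function factorizes as
\begin{align*}
	\big[(1\ot\omega)\circ\Delta(\schur\lambda)\big](a,b)
	=\prod_{i=1}^{\ell(a)}\prod_{j=1}^{\ell(b)}(a_i+b_j)\cdot\fschur{\mu}{a}\,\fschur{\nu}{b},
\end{align*}
with $\mu_i=\lambda_i-\ell(b)$ for $1\le i\le\ell(a)$ and $\nu_j=\lambda'_j-\ell(a)$ for $1\le j\le\ell(b)$, exactly as in the statement. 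I expect this final step to be the main obstacle: unlike the vanishing criterion, the factorization is not formal. The cleanest route is to establish it at the level of the (super) Jacobi--Trudi/bialternant determinant for the hook Schur function, where containment of the rectangle brings the defining matrix into block-triangular form whose diagonal blocks reproduce $\fschur{\mu}{a}$ and $\fschur{\nu}{b}$ while the coupling of the two alphabets yields the factor $\prod_{i,j}(a_i+b_j)$; the bookkeeping relating $(\lambda,\lambda')$ to $(\mu,\nu)$ is where care is required, and one may alternatively quote it directly from the literature on hook Schur functions.
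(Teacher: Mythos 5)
Your proposal is correct and follows essentially the same route as the paper: the multiplicity formula is obtained by extending Thm.~\ref{thm:ptr_and_K} from the generators $\elsym{k}$ to all of $\qsymm$ via the ring-homomorphism property of $\tau_*\circ\rho_{R*}\circ\theta$ (Prop.~\ref{thm:rho_R_ring_hom}), combined with Lemma~\ref{lem:rho_R_multiplicities}, $\theta(\schur{\lambda})=[p_\lambda]$, and homogeneity to pass to the rescaled parameters, while the vanishing criterion and rectangle factorization are exactly the standard super/hook Schur function facts that the paper simply cites from Examples 23 of Sections 3 and 5 of Macdonald. Your version merely makes explicit the ring-homomorphism comparison and the identification with $\sum_\mu\fschur{\mu}{a}\,\fschur{(\lambda/\mu)'}{b}$ that the paper leaves implicit.
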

\begin{proof}
  Let \(d\) be the dimension of \([R]\) and let
  \((\partn{a_1/d,a_2/d,\dots},\partn{b_1/d,b_2/d,\dots})\) be the
  associated Thoma parameters.
  The identity \eqref{eqn:multiplicities} follows by direct computation:
  \begin{align*}
    \langle \lambda, \rho_R^{(n)}\rangle&= d^n \rho_{R\ast}([p_\lambda])
    = d^n\rho_{R\ast}(\theta(\schur{\lambda}))\\
    &=d^n\left[(1\otimes \omega)\circ
    \Delta(\schur{\lambda})\right](a_1/d,a_2/d,\dots,b_1/d,b_2/d,\dots)\\
    &=\left[(1\otimes\omega)\circ\Delta(\schur{\lambda})\right](a,b).
  \end{align*}
  The remainder of the proposition is just Example 23 of Section 3 and Example~23 of Section 5 in \cite{Macdonald:1995}.
\end{proof}

The conditions for the vanishing of \(\langle \lambda,\rho_R^{(n)}\rangle\) were previously observed in \cite[Thm.~III.6.5]{Wassermann:1981} and in \cite[Thm.~6.9]{DoplicherHaagRoberts:1971}. An example of the multiplicities of irreducible \(S_n\) representations computed using Prop.~\ref{Proposition:Multiplicities} is given in
Fig.~\ref{fig:younglattice}.

  \begin{figure}[htp]
    \centering
    \scalebox{0.7}{
    \ytableausetup{boxsize=.9em}
    \begin{tikzpicture}[scale=0.8]
	\draw (0,6) node(s1) {\ytableaushort{{\scriptstyle 3}}*{1}};
	\draw (-1.5,4.5) node(s2) {\ytableaushort{{\scriptstyle 2}}*{2}};
	\draw (1.5,4.5) node(s11) {\ytableaushort{{\scriptstyle 7}}*{1,1}};
	\draw (-3,2.3) node(s3) {\ydiagram{3}*[*(lightgray)]{3}};
	\draw (0,2.3) node(s21) {\ytableaushort{{\scriptstyle 6}}*{2,1}};
	\draw (3,2.3) node(s111) {\ytableaushort{{\scriptstyle 15}}*{1,1,1}};
	\draw (-4.5,0) node(s4) {\ydiagram{4}*[*(lightgray)]{3}};
	\draw (-1.9,0) node(s31) {\ydiagram{3,1}*[*(lightgray)]{3}};
	\draw (0,0) node(s22) {\ytableaushort{{\scriptstyle 4}}*{2,2}};
	\draw (1.9,0) node(s211) {\ytableaushort{{\scriptstyle 14}}*{2,1,1}};
	\draw (4.5,0) node(s1111) {\ytableaushort{{\scriptstyle 31}}*{1,1,1,1}};
        \node at (-7.5,-3) (s5) {\ydiagram{5}*[*(lightgray)]{3}};
        \node at (-4.5,-3) (s41) {\ydiagram{4,1}*[*(lightgray)]{3}};
        \node at (-2.1,-3) (s32) {\ydiagram{3,2}*[*(lightgray)]{3}};
        \node at (0,-3) (s311) {\ydiagram{3,1,1}*[*(lightgray)]{3}};
        \node at (2.1,-3) (s221) {\ytableaushort{{\scriptstyle 12}}*{2,2,1}};
        \node at (4.5,-3) (s2111) {\ytableaushort{{\scriptstyle 30}}*{2,1,1,1}};
        \node at (7.5,-3) (s11111) {\ytableaushort{{\scriptstyle 63}}*{1,1,1,1,1}};

	\draw (s1) -- (s2);
	\draw (s1) -- (s11);
	\draw (s2) -- (s3);
	\draw (s2) -- (s21);
	\draw (s11) -- (s21);
	\draw (s11) -- (s111);
	\draw (s3) -- (s4);
	\draw (s3) -- (s31);
	\draw (s21) -- (s31);
	\draw (s21) -- (s22);
	\draw (s21) -- (s211);
	\draw (s111) -- (s211);
	\draw (s111) -- (s1111);
        \draw (s4) -- (s5);
        \draw (s4) -- (s41);
        \draw (s31) -- (s41);
        \draw (s31) -- (s32);
        \draw (s31) -- (s311);
        \draw (s22) -- (s32); 
        \draw (s22) -- (s221);
        \draw (s211) -- (s311);
        \draw (s211) -- (s221);
        \draw (s211) -- (s2111);
        \draw (s1111) -- (s11111);
        \draw (s1111) -- (s2111);
      \end{tikzpicture}
      }
      \ytableausetup{boxsize=0.5em,centertableaux}
 
        \caption{The Young lattice for the class of R-matrices associated to
          the pair $(\emptyset,\ydiagram{2,1})$. 
          Since the length of the first
          partition is 0 and that of the second is 2, any diagram containing a
          rectangle of height 1 and width 3 gives multiplicity 0.
          The first irreducible representation whose corresponding
          partition contains such a rectangle is the trivial representation of
          \(S_3\).
          For the
          remaining diagrams the multiplicities are given in the first
          box.}
    \label{fig:younglattice}
  \end{figure}

We end this section by comparing our classification of Yang-Baxter characters in terms of Thoma parameters with an alternative approach based on Hilbert-Poincar\'e series \cite{Gurevich:1991, Davydov:2000}. Writing $[1^n]=[1,\ldots,1]$ and $[n]$ for the partitions indexing the alternating and trivial representation of $S_n$, respectively, these series are defined as
\begin{align}
	H_R^-(z)
	:=
	\sum_{n=0}^\infty \langle[1^n],\rho_R^{(n)}\rangle\,z^n 
	\,,\qquad
	H_R^+(z)
	:=
	\sum_{n=0}^\infty \langle[n],\rho_R^{(n)}\rangle\,z^n \,.
\end{align}
Using a categorical description of R-matrices, it was shown in \cite[Thm.~1]{Davydov:2000} (for the case of Hecke algebra representations, see \cite{GPS}) that $H_R^-$ has to have the form 
	\begin{align} \label{eq:HR-}
		H_R^-(z) = \prod_{i=1}^N \frac{1 + a_i'z}{1 - b_i'z}
	\end{align}
	for suitable real positive values $a_i', b_i'$, $i \in \{1, \dots,
        N\}$, where we allow $a_i' = 0$ or $b_i' = 0$ to obtain sequences of
        equal length. Since $\schur{[1^n]}=\elsym{n}$, Thm.~\ref{thm:ptr_and_K} and Prop.~\ref{Proposition:Multiplicities} yield
	\begin{align}\label{eq:HilbertSeries}
		H_R^-(z) = \sum_{n = 0}^{\infty}\left[ (1 \otimes \omega) \circ \Delta(\elsym{n}) \right](a,b)\,z^n = g_R(d\cdot z) = \prod_{i=1}^N \frac{1 + a_i z}{1 - b_i z}\ .
	\end{align}
	Thus, we may pick $a_i' = a_i$ and $b_i' = b_i$ in
        \eqref{eq:HR-}. This allows us to extend the statement of
        \cite[Thm.~1]{Davydov:2000}: Not only is the Hilbert series a rational
        function, but its parameters $a_i$ and $b_i$ are non-negative
        integers. By a similar argument using the facts 
        $\schur{[n]}=\comsym{n}$ and $\omega(\comsym{n}) = \elsym{n}$ we obtain
	\begin{align}\label{eq:HilbertSeriesTriv}
		H_R^+(z) = \sum_{n = 0}^{\infty}\left[ (1 \otimes \omega) \circ \Delta(\elsym{n}) \right](b,a)\,z^n = \prod_{i=1}^N \frac{1 + b_i z}{1 - a_i z}\ .
	\end{align}
The following corollary collects some well-known results \cite{Gurevich:1991, Davydov:2000} about
Hilbert-Poincar\'e series which in our setting follow as immediate consequences of \eqref{eq:HilbertSeries}, \eqref{eq:HilbertSeriesTriv}, Prop.~\ref{proposition:boxplus+thoma} and Lemma~\ref{lemma:tensorproduct}.

\begin{corollary}
	Let $R,S\in\R_0$.
	\begin{enumerate}
		\item $H^+_R(z)\cdot H^-_R(-z)=1$.\\[-2mm]
		\item $H_{-R}^\pm(z)=H^\mp_R(z)$.\\[-2mm]
		\item $H^\pm_{R\boxplus S}(z)=H^\pm_R(z)\cdot H_S^\pm(z)$.
	\end{enumerate}
\end{corollary}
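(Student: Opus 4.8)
The plan is to reduce all three identities to the explicit product formulas for the Hilbert--Poincar\'e series, so that each becomes an elementary manipulation of rational functions. By \eqref{eq:HilbertSeries} and \eqref{eq:HilbertSeriesTriv}, if $R\in\R_0$ has rescaled Thoma parameters $(a,b)$, then
\[
H^-_R(z)=\prod_{i=1}^N\frac{1+a_iz}{1-b_iz},\qquad H^+_R(z)=\prod_{i=1}^N\frac{1+b_iz}{1-a_iz},
\]
where the two finite lists have been padded with zeros to a common length $N$, as allowed in \eqref{eq:HR-}. With these formulas in hand, the only genuine input needed is the behaviour of the parameters $(a,b)$ under the operations $R\mapsto -R$ and $\boxplus$.

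For $i)$, I would substitute $-z$ into the formula for $H^-_R$, obtaining $H^-_R(-z)=\prod_i(1-a_iz)/(1+b_iz)$, and multiply by $H^+_R(z)$; every factor $(1+b_iz)$ and $(1-a_iz)$ then cancels, leaving $1$. For $ii)$, the single step requiring an argument is to identify the rescaled Thoma parameters of $-R$. Since $(-R)_i=-R_i$, the trace formula \eqref{eq:YB-Character-on-n-cycles} gives $\chi_{-R}(c_n)=(-1)^{n-1}\chi_R(c_n)$; comparing with Thoma's formula \eqref{eq:Thoma-Formula} and using $(-1)^{n-1}=(-1)^{n+1}$ shows that passing from $R$ to $-R$ interchanges the two sequences, so that the rescaled parameters of $-R$ are $(b,a)$. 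Feeding this into the product formulas above immediately yields $H^-_{-R}=H^+_R$ and $H^+_{-R}=H^-_R$, which is $ii)$.

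For $iii)$, I would invoke Prop.~\ref{proposition:boxplus+thoma} in its rescaled form $(a,b)\boxplus(a',b')=(a\cup a',\,b\cup b')$: the parameters of $R\boxplus S$ are the concatenations of those of $R$ and of $S$. Since the product defining $H^\pm_{R\boxplus S}$ ranges over these concatenated lists, it factors as the product over the $R$-parameters times the product over the $S$-parameters, giving $H^\pm_{R\boxplus S}(z)=H^\pm_R(z)\cdot H^\pm_S(z)$. None of these steps is a real obstacle; the only point meriting care is the sign bookkeeping in $ii)$, where one must verify that \emph{both} sequences, and not just one, get swapped --- this is precisely what the identity $(-1)^{n-1}=(-1)^{n+1}$ guarantees.
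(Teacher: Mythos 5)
Your proof is correct and matches the paper's intent: the paper presents this corollary as an immediate consequence of the product formulas \eqref{eq:HilbertSeries} and \eqref{eq:HilbertSeriesTriv} together with Prop.~\ref{proposition:boxplus+thoma} and Lemma~\ref{lemma:tensorproduct}, and your computations are precisely the omitted verifications. The only (harmless) deviation is in part $ii)$, where the paper's citation of Lemma~\ref{lemma:tensorproduct} suggests obtaining the swap of Thoma parameters via $-R\cong R\boxtimes(-1_1)$, whereas you derive it directly from the trace formula \eqref{eq:YB-Character-on-n-cycles} and Thoma's formula --- both routes are equally valid.
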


\subsection{R-matrices and product states}\label{section:ProductStates}
	
Our Yang-Baxter characters are defined by composing the homomorphism $\rho_R:\Cl[S_\infty]\to\E_0$ with the unique tracial product state $\tau=\bigotimes_{n\geq1}\frac{\Tr_V}{d}$ of $\E_0$ \eqref{eq:DefChiR}. In this section, we briefly discuss how this construction extends to a much larger class of extremal characters when we change $\tau$ to a different product state on~$\E_0$: All extremal characters with Thoma parameters summing to 1 can, together with their GNS representations, be expressed in terms of R-matrices and product states. The essential difference to our Yang-Baxter setting is that instead of the canonical trace $\tau$, a different product state is used.

Let $R\in\R_0(V)$ and $Z\in\End(V)$ such that $[R,Z\ot Z]=0$. We consider the product state $\om_Z\deq \bigotimes_{n\geq1}\Tr_V(Z\,\cdot\,)$ on $\E_0$, which is tracial in restriction to $\rho_R(\Cl[S_\infty])$. Composed with $\rho_R$, we thus get a character $\om_Z^R\deq \om_Z\circ\rho_R$ of $S_\infty$. As in Prop.~\ref{proposition:chiR-extremal}, one shows that $\om_Z^R$ is extremal.

We want to show that any extremal $S_\infty$-character with Thoma parameters $(\alpha,\beta)$ satisfying $\sum_i(\alpha_i+\beta_i)=1$ is of this form. To this end, consider two Hilbert spaces $V_1$, $V_2$, such that $\dim V_1$ and $\dim V_2$ equal the number of non-vanishing $\alpha$'s and $\beta$'s, respectively (which might be countably infinite). Let us fix orthonormal bases  $\{e_i\}_i$ and $\{f_j\}_j$ of $V_1$ and $V_2$, and trace class operators $A\in\B(V_1)$, $B\in\B(V_2)$ defined by $Ae_i=\alpha_i\cdot e_i$, $Bf_j=\beta_j\cdot f_j$.

\begin{lemma}
	In the notation introduced above, consider the Hilbert space $W\deq V_1\oplus V_2$ and the (now possibly infinite dimensional) R-matrix $F\boxplus-F\in\R_0(W)$. Then 
	\begin{align}
		\om^{F\boxplus-F}_{A\oplus B}\deq \bigotimes_{n\geq1}\Tr_W((A\oplus B)\,\cdot\,)\circ\rho_{F\boxplus-F}:S_\infty\to\Cl
	\end{align}
	is an extremal character of $S_\infty$. Its $\alpha$-parameters are the eigenvalues of $A$, and its $\beta$-parameters are the eigenvalues of $B$.
\end{lemma}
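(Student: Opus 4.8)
The plan is to recognize the pair $R=F\boxplus-F$, $Z=A\oplus B$ as an instance of the general construction discussed just above and then to read off the Thoma parameters by evaluating the associated character on cycles. First I would verify the standing hypotheses. Positivity and normalization of $\om_{A\oplus B}$ are immediate: $A\oplus B\geq0$ since its eigenvalues $\alpha_i,\beta_j$ are non-negative, and $\Tr_W(A\oplus B)=\sum_i\alpha_i+\sum_j\beta_j=1$ by assumption, so each factor $\Tr_W((A\oplus B)\,\cdot\,)$ is a state and the infinite product state is well defined. For the commutation relation $[F\boxplus-F,(A\oplus B)\ot(A\oplus B)]=0$ I would check the three orthogonal summands of $W\ot W$ separately: on $V_1\ot V_1$ the R-matrix acts as the flip and $(A\oplus B)\ot(A\oplus B)$ as $A\ot A$, which commute because the flip commutes with any $X\ot X$; the same reasoning applies on $V_2\ot V_2$; and on the mixed part the flip intertwines $A\ot B$ with $B\ot A$, so again the two operators commute. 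With these hypotheses in place, the general discussion preceding the lemma (the analogue of Prop.~\ref{proposition:chiR-extremal}) shows that $\om^{F\boxplus-F}_{A\oplus B}$ is an extremal character of $S_\infty$.

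It then remains to compute its values on $n$-cycles and compare with Thoma's formula \eqref{eq:Thoma-Formula}. Since $\rho_{F\boxplus-F}(c_n)=R_1\cdots R_{n-1}$ involves only the first $n$ tensor legs and $(A\oplus B)\tp{n}$ is trace class, the evaluation (in analogy with \eqref{eq:YB-Character-on-n-cycles})
\begin{align*}
	\chi(c_n)=\Tr_{W\tp{n}}\big((A\oplus B)\tp{n}\,R_1\cdots R_{n-1}\big)
\end{align*}
is well defined even when $W$ is infinite dimensional. The key observation is that $F\boxplus-F$ is the graded flip: in an eigenbasis $\{w_a\}$ of $A\oplus B$ adapted to the grading $W=V_1\oplus V_2$ (with parity $|a|=0$ for $w_a\in V_1$ and $|a|=1$ for $w_a\in V_2$, and $(A\oplus B)w_a=z_a w_a$), the R-matrix acts on $w_a\ot w_b$ as $(-1)^{|a||b|}\,w_b\ot w_a$, the sign $-1$ occurring precisely on $V_2\ot V_2$. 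Hence $R_1\cdots R_{n-1}$ is a signed permutation realizing the cyclic permutation of the $n$ legs, and the only index strings contributing to the trace are the constant ones $w_a\tp{n}$ (because an $n$-cycle fixes only constant strings); on such a string each of the $n-1$ elementary factors contributes a sign $(-1)^{|a|}$, so the total weight is $z_a^n\,(-1)^{(n-1)|a|}$. Summing over the eigenbasis gives
\begin{align*}
	\chi(c_n)=\sum_{|a|=0}z_a^n+(-1)^{n+1}\sum_{|a|=1}z_a^n=\sum_i\alpha_i^n+(-1)^{n+1}\sum_j\beta_j^n,
\end{align*}
which is exactly \eqref{eq:Thoma-Formula}. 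By the uniqueness of Thoma parameters (Thm.~\ref{theorem:ThomaParameters}), the $\alpha$-parameters of $\om^{F\boxplus-F}_{A\oplus B}$ are then the eigenvalues of $A$ and the $\beta$-parameters are the eigenvalues of $B$.

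The main obstacle I anticipate is careful bookkeeping in two places. First, the sign computation must be verified precisely: one has to confirm that in the adapted eigenbasis $F\boxplus-F$ really acts as the Koszul-signed flip $(-1)^{|a||b|}$ on $w_a\ot w_b$, and that only constant index strings survive in the trace because the underlying leg permutation is an $n$-cycle, yielding the product of signs $(-1)^{(n-1)|a|}$. Second, one should make the infinite-dimensional case rigorous: the sums $\sum_i\alpha_i^n$ and $\sum_j\beta_j^n$ converge by domination (since $\alpha_i,\beta_j\in[0,1]$ are summable, $\alpha_i^n\leq\alpha_i$ for $n\geq1$), and the traciality of $\om_{A\oplus B}$ on $\rho_{F\boxplus-F}(\Cl[S_\infty])$ needed for the class-function property follows from $[F\boxplus-F,(A\oplus B)\ot(A\oplus B)]=0$ exactly as in the general setup, each character value involving only finitely many legs and the trace-class operator $(A\oplus B)\tp{n}$.
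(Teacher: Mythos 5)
Your proposal is correct and follows essentially the same route as the paper: both establish $[F\boxplus-F,\,Z\ot Z]=0$ so that the general product-state construction gives extremality, and both then evaluate the character on $n$-cycles by a basis expansion of the weighted trace in which only constant index strings survive, yielding $\sum_i\alpha_i^n+(-1)^{n+1}\sum_j\beta_j^n$. The only cosmetic difference is that the paper first splits the trace into the $V_1\tp{n}$ and $V_2\tp{n}$ sectors (in analogy with \eqref{eq:Rhat-traces}) before expanding each summand, whereas you do the sign bookkeeping in a single step via the graded-flip description of $F\boxplus-F$.
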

\begin{proof}
	In view of the simple structure of $F\boxplus-F$, it is easy to see that this operator commutes with $Z\ot Z$, where $Z\deq A\oplus B$. Thus $\om_{A\oplus B}^{F\boxplus-F}$ is indeed an extremal character, and it remains to compute its Thoma parameters. Using the orthogonality $V_1\perp V_2$ and the direct sum structure of $Z=A\oplus B$, one shows in close analogy to Prop.~\ref{proposition:boxplus+thoma} (see, in particular, \eqref{eq:Rhat-traces}), that for any $n$-cycle,
	\begin{align}
		\om_{A\oplus B}^{F\boxplus-F}(c_n)
		&=
		\om_A^F(c_n)+\om_B^{-F}(c_n)\,.
	\end{align}
	Furthermore, 
	\begin{align*}
		\om_A^F(c_n)
		&=
		\sum_{i_1,\ldots,i_n}
		\alpha_{i_1}\cdots\alpha_{i_n}\,\langle e_{i_1}\ot\ldots\ot e_{i_n},\,e_{i_2}\ot\ldots\ot e_{i_n}\ot e_{i_1}\rangle
		=
		\sum_i\alpha_i^n,\\
		\om_B^{-F}(c_n)
		&=
		(-1)^{n+1}\sum_{j_1,\ldots,j_n}
		\beta_{j_1}\cdots\beta_{j_n}\,\langle f_{j_1}\ot\ldots\ot f_{j_n},\,f_{j_2}\ot\ldots\ot f_{j_n}\ot f_{j_1}\rangle\\
		&=
		(-1)^{n+1}\sum_j\beta_j^n\,.
	\end{align*}
	These two terms sum to the value of the extremal character with Thoma parameters $(\alpha,\beta)$.
\end{proof}

We next describe the GNS representation of $\om_{A\oplus B}^{F\boxplus-F}$, which turns out to be closely related to R-matrices as well. In the notation introduced above, let $V\deq W\ot W$ and 
\begin{align}\label{eq:Rbt}
	R\deq (F\boxplus-F)\boxtimes1\in\R_0(V)\,,
\end{align}
where $F\in\R_0(V_1)$, $-F\in\R_0(V_2)$, and $1\in\R_0(W)$. In $V$, we fix the unit vector
\begin{align}
	\xi\deq \sum_i\sqrt{\alpha_i}e_i\ot e_i+\sum_j\sqrt{\beta_j}f_j\ot f_j\in W\ot W=V.
\end{align}
This vector determines inclusions $V\tp{n}\to V\tp{(n+1)}$ by tensoring with $\xi$ from the right, and we denote the corresponding inductive limit Hilbert space $\bigotimes_{n\geq1}^\xi V$.

\begin{proposition}
	Let $\chi$ be an extremal $S_\infty$-character with Thoma parameters $(\alpha,\beta)$ satisfying $\sum_i(\alpha_i+\beta_i)=1$. Then the GNS data $(\pi_\chi,\Hil_\chi,\Om_\chi)$ can be described in terms of the previously introduced $V,R$, and $\xi$ as
	\begin{align}
		\Om_\chi=\bigotimes_{n\geq1}\xi,\qquad
		\Hil_\chi=\overline{\rho_R(\Cl[S_\infty])\Om_\chi},\qquad  \pi_\chi=\rho_R\,.
	\end{align}
\end{proposition}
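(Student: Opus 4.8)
The plan is to verify directly that the proposed data form a GNS triple for $\chi$ and then invoke the uniqueness of the GNS construction. Since $\Hil_\chi$ is defined as the closure of $\rho_R(\Cl[S_\infty])\Om$ with $\Om=\bigotimes_{n\geq1}\xi$, the vector $\Om$ is cyclic by construction, and the $\rho_R(\sigma)$ are unitaries because $R$ is a unitary involutive R-matrix; hence $\rho_R$ is a $*$-representation with cyclic vector $\Om$. Everything therefore reduces to the single identity $\langle\Om,\rho_R(\sigma)\Om\rangle=\chi(\sigma)$ for all $\sigma\in S_\infty$. I would first record where the hypothesis enters: $\sum_i(\alpha_i+\beta_i)=1$ is exactly what makes $\|\xi\|^2=\sum_i\alpha_i+\sum_j\beta_j=1$, so that $\Om$ is a unit vector and the infinite tensor product $\bigotimes_{n\geq1}^\xi V$ is well defined; simultaneously $Z\deq A\oplus B$ is a trace-class density operator on $W$ with $\Tr_W(Z)=1$.

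The heart of the argument is a purification computation. First I would unwind the definition $R=(F\boxplus-F)\boxtimes1$: writing each copy of $V=W\ot W$ as a \emph{first} and a \emph{second} copy of $W$, the conjugation $F_2(\,\cdot\,)F_2$ in the definition of $\boxtimes$ shows that under $\rho_R$ the group $S_\infty$ acts by $\rho_{F\boxplus-F}$ on the strand of first-$W$ factors and trivially (via $1$) on the strand of second-$W$ factors. Reordering the tensor factors so that all first copies of $W$ are grouped together and all second copies are grouped together, the reference vector restricted to the first $N$ factors becomes $\sum_{\vec a}\sqrt{z_{\vec a}}\,g_{\vec a}\ot g_{\vec a}$, where $\{g_a\}=\{e_i\}\cup\{f_j\}$ is the eigenbasis of $Z$ with eigenvalues $z_a$, and $g_{\vec a}=g_{a_1}\ot\cdots\ot g_{a_N}$, $z_{\vec a}=\prod_n z_{a_n}$. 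This is precisely the canonical purification of $Z\tp{N}$.

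I would then apply the elementary purification identity: for $\sigma\in S_N$ and $X=\rho_{F\boxplus-F}^{(N)}(\sigma)$, only finitely many factors are moved (the remaining ones contributing $\langle\xi,\xi\rangle=1$), so
\begin{align*}
	\langle\Om,\rho_R(\sigma)\Om\rangle
	=\sum_{\vec a}z_{\vec a}\,\langle g_{\vec a},X g_{\vec a}\rangle
	=\Tr_{W\tp{N}}\big(Z\tp{N}\,\rho_{F\boxplus-F}^{(N)}(\sigma)\big),
\end{align*}
where the first equality uses $\langle g_{\vec a},g_{\vec b}\rangle=\delta_{\vec a\vec b}$ and the second that $Z\tp{N}$ is diagonal in $\{g_{\vec a}\}$. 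The right-hand side equals $\big(\bigotimes_{n=1}^N\Tr_W(Z\,\cdot\,)\big)\big(\rho^{(N)}_{F\boxplus-F}(\sigma)\big)=\om_{A\oplus B}^{F\boxplus-F}(\sigma)$. By the preceding Lemma this product-state character is exactly the extremal character $\chi$ with parameters $(\alpha,\beta)$, so $\langle\Om,\rho_R(\sigma)\Om\rangle=\chi(\sigma)$, as required. Uniqueness of the GNS representation then yields the claimed identification $\pi_\chi=\rho_R$, $\Om_\chi=\bigotimes_{n\geq1}\xi$, and $\Hil_\chi=\overline{\rho_R(\Cl[S_\infty])\Om_\chi}$.

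The main obstacle I anticipate is purely bookkeeping: correctly propagating the two $F_2$ swaps in $\boxtimes$ through the doubling $V=W\ot W$ and the infinite tensor product, so as to identify which $W$-strand carries the $F\boxplus-F$-action and which is the spectator entangled with it by $\xi$. Some care is also needed because $W$ may be countably infinite dimensional, but this causes no real difficulty: $Z$ is trace class with $\Tr_W(Z)=1$, and each group element acts on only finitely many tensor factors, so the purification identity and all traces involved remain finite and the computation above is rigorous.
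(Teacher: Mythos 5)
Your proof is correct and follows essentially the same route as the paper: the key step in both is the observation that $R=(F\boxplus-F)\boxtimes 1$ acts on $(W\ot W)^{\ot n}$ as the signed permutation representation $\rho_{F\boxplus-F}$ on the first $W$-strand and trivially on the second, after which one computes $\langle\Om,\rho_R(\sigma)\Om\rangle=\chi(\sigma)$ and invokes GNS uniqueness. The only difference is that you carry out this last verification explicitly, via the purification identity $\langle\Om,\rho_R(\sigma)\Om\rangle=\Tr_{W^{\ot N}}\bigl(Z^{\ot N}\rho^{(N)}_{F\boxplus-F}(\sigma)\bigr)=\om^{F\boxplus-F}_{A\oplus B}(\sigma)$ combined with the preceding lemma, whereas the paper delegates it to Propositions 10.5 and 10.6 of Borodin--Olshanski; your version is self-contained and equally valid.
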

\begin{proof}
	Observe that for $w_1,w_2,w_3,w_4\in W$, the R-matrix \eqref{eq:Rbt} acts according to
	\begin{align}
		R(w_1\ot w_2\ot w_3\ot w_4) 
		= 
		\pm\,w_3\ot w_2\ot w_1\ot w_4\,,
	\end{align}
	where the sign is negative if both $w_1$ and $w_3$ lie in $V_2$, and positive otherwise. 
	
	Let $w_1,\ldots, w_n,u_1,\ldots,u_n\in W$, and $\sigma\in S_n$. Then this action of $R$ implies
	\begin{align}
		\rho_R(\sigma)\bigotimes_{k=1}^n (w_k\ot u_k)
		=
		\pm
		\bigotimes_{k=1}^n(w_{\sigma^{-1}(k)}\ot u_k),
	\end{align}
	with the sign depending on the number of vectors $w_k$ lying in $V_2$. From here one verifies 
	\begin{align}
		\langle\bigotimes_{n\geq1}\xi,\rho_R(\sigma)\bigotimes_{n\geq1}\xi\rangle
		=
		\chi(\sigma)
	\end{align}
	by following \cite[Prop.~10.5, Prop.~10.6]{BorodinOlshanski:2017}.
\end{proof}

The representation in the above proposition is known (see the original literature \cite{Olshanski:1990,Wassermann:1981} or the monograph \cite{BorodinOlshanski:2017}, where also the relation to spherical representations of $S_\infty$ is discussed), but takes a particularly simple form in terms of our operations $\boxplus$ and $\boxtimes$.

\section{Examples}\label{section:examples}

In this section, we discuss two special classes of involutive R-matrices. 

\subsection{R-matrices of diagonal type}

As a simple class of examples which exist in any dimension, we consider involutive R-matrices of {\em diagonal type}. An R-matrix $R\in\R_0$ is said to be diagonal if it is of the form $R=DF$, with $F$ the flip, and for some orthonormal basis $\{e_i\}_i$ of $V$, the matrix $D\in\End(V\ot V)$ is diagonal in the corresponding tensor basis, i.e.
\begin{align}
	D(e_i\ot e_j)=\la_{ij}\,e_i\ot e_j\,,\qquad i,j=1,\ldots,d\,,
\end{align}
where $\la_{ij}\in\Cl$. It is easy to check that such $R$ solve the Yang-Baxter equation. An R-matrix is said to be of diagonal type if it is equivalent to a diagonal one. 

The R-matrix $R=DF$, $R(e_i\ot e_j)=\la_{ji}\,e_j\ot e_i$, is unitary and involutive if and only if
\begin{align}\label{eq:lambda}
	|\la_{ij}|=1\,,\qquad \la_{ji}=\la_{ij}^{-1}\,,\qquad i,j=1,\ldots,d.
\end{align}
In particular, we have $\la_{ii}=\pm1$ for each $i\in\{1,\ldots,d\}$, and we introduce the parameter $\ell\in\{0,\ldots,d\}$ as the number of $\la_{ii}$'s that are equal to $+1$. This parameter is uniquely fixed by the {\em rank} $r$ of $R$, which is defined as the multiplicity of the eigenvalue $+1$ of $R$. In fact, the trace of $R$ is 
\begin{align}\label{eq:Tr-r-vs-l}
	2\ell-d=\sum_{i=1}^d\la_{ii}=\Tr(R)=2r-d^2\,.
\end{align}
As $\ell$ ranges over $\{0,\ldots,d\}$, the rank $r$ ranges over
\begin{align}\label{eq:r-range-diagonal}
	\frac{1}{2}d(d-1)\leq r\leq\frac{1}{2}d(d+1)\,.
\end{align}
Thus diagonal involutive R-matrices of dimension $d$ and rank $r$ exist if and only if \eqref{eq:r-range-diagonal} is satisfied.

\begin{proposition}\label{proposition:diagonal}\leavevmode
	\begin{enumerate}
		\item Let $R\in\R_0$ be of diagonal type, with dimension $d$, rank $r$, and $\ell\deq r-\frac{1}{2}d(d-1)$. Then
		\begin{align}
			R\sim \bigboxplus_{i=1}^\ell1_1\boxplus\bigboxplus_{j=1}^{d-\ell}(-1_1)\,,
		\end{align}
		and the non-vanishing Thoma parameters of $R$ are 
		\begin{align}
		    \alpha_1=\alpha_2=\ldots=\alpha_{\ell}=\beta_1=\ldots=\beta_{d-\ell}=d^{-1}.
		\end{align}
		\item Any two involutive R-matrices of diagonal type with the same dimension and rank are equivalent.
	\end{enumerate}
\end{proposition}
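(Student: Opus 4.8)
The plan is to reduce both parts to a single computation of the partial trace, since by Thm.~\ref{theorem:PartialTraces} the similarity class of $\ptr R$ is a complete invariant of the equivalence class of $R$. As $R$ is only assumed to be \emph{of} diagonal type, I would first use that the $\ptr$-similarity class is invariant under $\sim$ to replace $R$ by a genuinely diagonal representative $R'=DF$ with $D(e_i\ot e_j)=\la_{ij}e_i\ot e_j$, so that $R'(e_i\ot e_j)=\la_{ji}e_j\ot e_i$. Everything then hinges on computing $\ptr R'$ in the basis $\{e_i\}$.

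The key computation is straightforward: writing out $\langle e_k,(\ptr R')e_l\rangle=\sum_m\langle e_m\ot e_k,R'(e_m\ot e_l)\rangle$ (using the left partial trace of \eqref{eq:partialtracedef}) and inserting $R'(e_m\ot e_l)=\la_{lm}e_l\ot e_m$, the inner product forces $m=l$ and $k=m$, so only the term $k=l=m$ survives and contributes $\la_{kk}$. Hence $\ptr R'=\mathrm{diag}(\la_{11},\ldots,\la_{dd})$, a diagonal matrix whose entries are $\pm1$ by \eqref{eq:lambda}. By the definition of $\ell$ this matrix has eigenvalue $+1$ with multiplicity $\ell$ and eigenvalue $-1$ with multiplicity $d-\ell$; the identification $\ell=r-\tfrac12 d(d-1)$ with the rank is exactly the content of the trace relation \eqref{eq:Tr-r-vs-l} already established.

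To prove part~$i)$ I would then compute the partial trace of the normal form $N\deq\bigboxplus_{i=1}^\ell 1_1\boxplus\bigboxplus_{j=1}^{d-\ell}(-1_1)$. By Lemma~\ref{lemma:boxplus}~$iv)$ the partial trace distributes over $\boxplus$, and since $\ptr(1_1)=1$ and $\ptr(-1_1)=-1$ on $\Cl$, one gets $\ptr N=\mathrm{diag}(1,\ldots,1,-1,\ldots,-1)$ with the same multiplicities $\ell$ and $d-\ell$. Thus $\ptr R\cong\ptr R'\cong\ptr N$, and Thm.~\ref{theorem:PartialTraces} yields $R\sim N$. The stated Thoma parameters follow immediately, either by reading off the eigenvalues via Thm.~\ref{theorem:Young-Parameterization}~$ii)$ or by applying Lemma~\ref{lemma:ParametersOfNormalForms} to $N$ (with $d^+_i=d^-_j=1$), giving $\alpha_1=\cdots=\alpha_\ell=\beta_1=\cdots=\beta_{d-\ell}=d^{-1}$. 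Part~$ii)$ is then a formal consequence: two diagonal-type R-matrices of the same dimension $d$ and rank $r$ share the same value of $\ell$, hence are each equivalent to the \emph{same} normal form $N$, and therefore to one another by transitivity of $\sim$.

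I do not expect a genuine obstacle here, as the result is an easy corollary of the partial-trace classification. The only points requiring care are bookkeeping: using the \emph{left} partial trace consistently, correctly counting the $+1$'s among the diagonal entries $\la_{ii}$, and justifying the initial reduction from ``diagonal type'' to an actually diagonal representative via the $\sim$-invariance of the $\ptr$-similarity class.
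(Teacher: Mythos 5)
Your proposal is correct and follows essentially the same route as the paper: compute $\ptr$ of a diagonal representative in the distinguished basis, find it is $\id_{\Cl^\ell}\oplus(-\id_{\Cl^{d-\ell}})$, and conclude via Thm.~\ref{theorem:PartialTraces} and Thm.~\ref{theorem:Young-Parameterization}~$ii)$ (your explicit computation of $\ptr N$ via Lemma~\ref{lemma:boxplus}~$iv)$ is just an unwinding of the latter). Your extra care in reducing from ``diagonal type'' to an actually diagonal representative is a point the paper's proof leaves implicit.
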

\begin{proof}
	$i)$ In the basis defining $D$, one has 
	\begin{align*}
		\langle e_i,\,\ptr(R)\,e_j\rangle
		=
		\sum_k\langle e_i\ot e_k,\,D\,e_k\ot e_j\rangle
		=
		\la_{ii}\,\delta_{ij},
	\end{align*}
	which shows $\ptr(R)=\id_{\Cl^\ell}\oplus(-\id_{\Cl^{d-\ell}})$. The claim now follows from Thm.~\ref{theorem:PartialTraces}~$ii)$ and Thm.~\ref{theorem:Young-Parameterization}~$ii)$.
	
	$ii)$ The character depends only on $d$ and $\ell$, and the rank $r$ determines $\ell$ uniquely.
\end{proof}

In terms of diagrams, diagonal R-matrices have the form
\begin{align}
	\left(\ydiagram{1,1,1,1},\ydiagram{1,1,1}\right),\quad 
\end{align}
with $\ell$ boxes in the left and $d-\ell$ boxes in the right column. Yang-Baxter characters of diagonal R-matrices play a significant role in the analysis of the statistics of superselection sectors in quantum field theory \cite[Prop.~6.10]{DoplicherHaagRoberts:1971}. They also feature prominently in the context of tensor categories as the skew-invertible unitary R-matrices \cite{Lyubashenko:1987,Gurevich:1991}.

\subsection{Temperley-Lieb R-matrices}

As a second class of examples, we consider solutions coming from representations of the Temperley-Lieb algebra \cite{TemperleyLieb:1971_2}. Given an involutive R-matrix $R\in\R_0$, we denote its spectral projection onto eigenvalue $+1$ by $P$, i.e. $P=\frac{1}{2}(R+1)$. One computes
\begin{align}\label{eq:YBE-P}
	\frac{1}{8}\left( R_1R_2R_1-R_2R_1R_2\right)
	=
	\left(P_1P_2P_1-\frac{1}{4}P_1\right)-\left(P_2P_1P_2-\frac{1}{4}P_2\right),
\end{align}
and this vanishes by the Yang-Baxter equation. If both terms on the right hand side vanish individually,
\begin{align}\label{eq:P-TL}
	P_1P_2P_1=\frac{1}{4}P_1\,,\qquad P_2P_1P_2=\frac{1}{4}P_2\,,
\end{align}
then $R$ is said to be of {\em Temperley-Lieb type}. This terminology is justified by the close relation of \eqref{eq:P-TL} to the defining relations of the Temperley-Lieb algebra: Recall that given $\delta>0$, the Temperley-Lieb algebra $\TL(\delta)$ is the unital ${}^*$-algebra over $\Cl$ with generators $T_k$, $k\in\Nl$, and the relations
\begin{align*}
	T_k^2
	&=
	\delta\,T_k,\quad T_k^*=T_k,\\
	T_kT_m
	&=
	T_mT_k\,,\quad |k-m|\geq2\,,\\
	T_kT_mT_k
	&=
	T_k\,,\qquad\; |k-m|=1\,.
\end{align*}
Given an orthogonal projection $P\in\End(V\ot V)$ satisfying \eqref{eq:P-TL}, setting $T_k\deq 2\,P_k$, $k=1,\ldots,n-1$, defines a representation of the Temperley-Lieb algebra $\TL(\delta)$ with $\delta=2$.

Our equivalence relation $\sim$ preserves the property of being of Temperley-Lieb type, as follows from the lemma below.

\begin{lemma}\label{lemma:TL} 
	An R-matrix $R\in\R_0$ is of Temperley-Lieb type if and only if $\rho_R^{(3)}$ does not contain the trivial representation of $S_3$.
\end{lemma}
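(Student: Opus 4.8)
The plan is to pass from $R$ to the spectral projection $P_k=\tfrac12(R_k+1)$ of $R_k$ onto its $+1$-eigenspace, so that $R_k=2P_k-1$ and the defining condition \eqref{eq:P-TL} for $R$ to be of Temperley-Lieb type reads $A=0$ and $B=0$, where I abbreviate $A\deq P_1P_2P_1-\tfrac14 P_1$ and $B\deq P_2P_1P_2-\tfrac14 P_2$. The first thing I would record is that by the identity \eqref{eq:YBE-P} together with the Yang-Baxter equation one has $A=B$; hence $R$ is of Temperley-Lieb type if and only if the single operator $A$ vanishes.

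Next I would translate the representation-theoretic side. Since $S_3$ is generated by $\sigma_1,\sigma_2$ and $\rho_R^{(3)}(\sigma_k)=R_k$, a vector $v\in V\tp{3}$ is fixed by all of $S_3$ if and only if $R_1v=v$ and $R_2v=v$, i.e.\ if and only if $v$ lies in the intersection $P_1V\tp{3}\cap P_2V\tp{3}$ of the two $+1$-eigenspaces. Thus the multiplicity of the trivial representation in $\rho_R^{(3)}$ equals $\dim(P_1V\tp{3}\cap P_2V\tp{3})$, and the statement to be proved becomes: $A=0$ if and only if $P_1V\tp{3}\cap P_2V\tp{3}=\{0\}$.

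The forward direction is immediate: if $R$ is of Temperley-Lieb type and $v\in P_1V\tp{3}\cap P_2V\tp{3}$, then $P_1v=P_2v=v$ gives $v=P_1P_2P_1v=\tfrac14 P_1v=\tfrac14 v$, so $v=0$. For the converse I would exploit that $A$ is supported on both eigenspaces. Directly from $P_k^2=P_k$ one checks $A=P_1A$ and $B=P_2B$; combined with $A=B$ this yields $A=P_1A=P_2A$. Consequently, for every $u\in V\tp{3}$ the vector $Au$ satisfies $Au=P_1(Au)=P_2(Au)$, so $Au\in P_1V\tp{3}\cap P_2V\tp{3}$. Under the hypothesis that this intersection is trivial we conclude $A=0$, whence $B=A=0$ and $R$ is of Temperley-Lieb type.

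I expect no serious obstacle here; the only point requiring care is the clean reduction $A=B\Rightarrow(A=P_1A=P_2A)$, which lets me avoid the heavier two-projection structure theory one might otherwise invoke (decomposing $V\tp{3}$ into the four one-dimensional blocks and the generic angle block). It is precisely the Yang-Baxter equation, through the equality $A=B$, that forces the range of $A$ into the common $+1$-eigenspace, so that the single condition ``$\rho_R^{(3)}$ contains no trivial representation'' controls the entire pair of Temperley-Lieb relations.
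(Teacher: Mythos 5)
Your proof is correct. Both directions check out: the reduction to $A=B$ via \eqref{eq:YBE-P} and the Yang--Baxter equation is exactly right, the identification of the multiplicity of the trivial representation with $\dim\bigl(P_1V\tp{3}\cap P_2V\tp{3}\bigr)$ is standard, the forward direction is immediate, and the absorption identities $A=P_1A$ and $A=B=P_2B=P_2A$ do force $\operatorname{ran}A$ into the common $+1$-eigenspace, so triviality of that intersection kills $A$ and hence both Temperley--Lieb relations.

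The route differs from the paper's in a way worth noting. The paper works with the Young symmetrizer $p_3=\frac{1}{6}\sum_{\sigma\in S_3}\sigma$ and computes its image explicitly: substituting $R_k=2P_k-1$ into the six-term sum collapses it to $\rho_R(p_3)=\frac{4}{3}\bigl(P_1P_2P_1-\frac{1}{4}P_1\bigr)=\frac{4}{3}\bigl(P_2P_1P_2-\frac{1}{4}P_2\bigr)$, so that ``no trivial subrepresentation'' ($\rho_R(p_3)=0$) and the Temperley--Lieb relations become literally the same equation, and both implications drop out at once. You avoid expanding that sum and instead argue at the level of subspaces, splitting the equivalence into two directions and using the range argument for the harder one. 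The two proofs lean on the same consequence of the Yang--Baxter equation, namely $A=B$; what the paper's computation buys is a one-line simultaneous proof of both directions (and the explicit formula $\rho_R(p_3)=\frac{4}{3}A$, which makes transparent \emph{why} your containment $\operatorname{ran}A\subseteq P_1V\tp{3}\cap P_2V\tp{3}$ holds: $\frac{4}{3}A$ \emph{is} the projection onto the invariants), while your version buys a cleaner conceptual picture with less algebra and no need to evaluate the group average.
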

\begin{proof}
	Let $p_3\in\Cl[S_3]$ be the projection given by the trivial representation of $S_3$, represented as
     \begin{align}\label{eq:E3}
	  \rho_R(p_3)
	  =
	  \frac{1}{6}\left(R_1R_2R_1+R_1R_2+R_2R_1+R_1+R_2+1\right)\,.
     \end{align}
     Clearly, $\rho_R^{(3)}$ does not contain the trivial representation if and only if $\rho_R(p_3)=0$. Inserting $R=2P-1$ into \eqref{eq:E3} gives by straightforward calculation
     \begin{align*}
	  \rho_R(p_3)=\frac{4}{3}\left(P_1P_2P_1-\frac{1}{4}P_1\right)=\frac{4}{3}\left(P_2P_1P_2-\frac{1}{4}P_2\right).
     \end{align*}
     Thus $\rho_R(p_3)=0$ is equivalent to the Temperley-Lieb relations \eqref{eq:P-TL}.
\end{proof}

The example displayed in Fig.~\ref{fig:younglattice} describes a Temperley-Lieb type
R-matrix because the trivial representation of \(S_3\) does not appear in \(\rho_R^{(3)}\).

\begin{proposition}\label{proposition:TL-Rs}\leavevmode
	\begin{enumerate}
		\item Yang-Baxter representations of the Temperley-Lieb algebra $\TL(2)$ with rank~$r$ and dimension $d$ exist if and only if 
	\begin{align}\label{eq:quad}
		d^2-4r=k^2
	\end{align}
	for some $k\in\Nl_0$. Two such representations are equivalent if and only if they have the same dimension and rank.
		\item Let $R\in\R_0$ be an involutive R-matrix of Temperley-Lieb type with dimension $d$ and rank $r$. Then its non-vanishing Thoma parameters are
		\begin{align}\label{eq:ab-TL}
			\beta_1=\frac{1}{2}\left(1+\sqrt{1-\frac{4r}{d^2}}\right)\,,\quad \beta_2=\frac{1}{2}\left(1-\sqrt{1-\frac{4r}{d^2}}\right)\,.
		\end{align}
	\end{enumerate}
\end{proposition}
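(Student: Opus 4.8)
The plan is to translate the purely algebraic Temperley-Lieb condition into a constraint on Thoma parameters and then solve the resulting two-variable symmetric system. By Lemma~\ref{lemma:TL}, $R$ is of Temperley-Lieb type precisely when $\langle[3],\rho_R^{(3)}\rangle=0$, where $[3]$ indexes the trivial representation of $S_3$. First I would feed this into the vanishing criterion of Prop.~\ref{Proposition:Multiplicities}, written in terms of the rescaled Thoma parameters $(a,b)$ of $R$: the multiplicity $\langle[3],\rho_R^{(3)}\rangle$ vanishes if and only if the one-row diagram $[3]$ contains a rectangle of height $\ell(a)+1$ and width $\ell(b)+1$. Since $[3]$ is a $1\times 3$ diagram, such a rectangle fits exactly when $\ell(a)+1\le 1$ and $\ell(b)+1\le 3$, i.e. $\ell(a)=0$ and $\ell(b)\le 2$. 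Thus being of Temperley-Lieb type is equivalent to $\alpha=0$ together with at most two non-vanishing parameters $\beta_1\ge\beta_2$.

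Next I would pin these two numbers down using the dimension and the rank. By Cor.~\ref{corollary:YB->gamma+finite} we have $\beta_1+\beta_2=1$. For a second equation I use $\chi_R(c_2)=d^{-2}\Tr(R)$ together with $\Tr(R)=2r-d^2$, which follows from $R=2P-1$ and $\Tr P=r$. Comparing with Thoma's formula \eqref{eq:Thoma-Formula} at $n=2$, which under $\alpha=0$ reads $\chi_R(c_2)=-(\beta_1^2+\beta_2^2)$, gives $\beta_1^2+\beta_2^2=1-2r/d^2$, and hence $\beta_1\beta_2=r/d^2$. Therefore $\beta_1,\beta_2$ are the two roots of $t^2-t+r/d^2$, and solving this quadratic yields exactly the values in \eqref{eq:ab-TL}, proving part~$ii)$.

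For part~$i)$ I pass to the rescaled parameters $b_i=d\beta_i$, which satisfy $b_1+b_2=d$ and $b_1b_2=d^2\beta_1\beta_2=r$, so they are the roots of $t^2-dt+r$ with discriminant $d^2-4r$. By Thm.~\ref{theorem:YB-parameters} the $b_i$ are non-negative integers, so the discriminant must be a perfect square, giving necessity of $d^2-4r=k^2$. For sufficiency, given $d^2-4r=k^2$ with $k\in\Nl$, set $b_{1,2}=(d\pm k)/2$; a parity check ($d^2\equiv k^2 \bmod 4$ forces $d\equiv k\bmod 2$) shows $b_i\in\Nl$ (allowing $b_2=0$), and I build the normal form R-matrix $N=(-1_{b_1})\boxplus(-1_{b_2})$, dropping a zero summand if $b_2=0$. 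By Lemma~\ref{lemma:ParametersOfNormalForms} it has dimension $d$ and parameters $\alpha=0$, $\beta_i=b_i/d$; since $\ell(a)=0$ and $\ell(b)\le 2$ it is of Temperley-Lieb type by the rectangle argument above, and its rank equals $d^2\beta_1\beta_2=b_1b_2=r$ (either by reversing the computation of part~$ii)$, or by noting that the $+1$-eigenspace of $N$ is the symmetric subspace of the flip on the mixed summand $(\Cl^{b_1}\ot\Cl^{b_2})\oplus(\Cl^{b_2}\ot\Cl^{b_1})$, of dimension $b_1b_2$). Finally, two Temperley-Lieb R-matrices of equal dimension and rank have identical Thoma parameters by part~$ii)$ and equal dimension, hence are equivalent by the criterion following Prop.~\ref{proposition:chiR-extremal} (equivalently Thm.~\ref{theorem:PartialTraces}).

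The main obstacle is the first step: correctly extracting from the algebraic Temperley-Lieb relations \eqref{eq:P-TL} the statement that $\alpha=0$ and at most two $\beta$'s survive, which hinges on the rectangle form of the vanishing condition in Prop.~\ref{Proposition:Multiplicities}. Everything downstream is an elementary two-variable computation. I would take some care with the degenerate boundary cases, namely $r=0$ (where $k=d$, $b_2=0$, and $N\sim -1_d$) and $k=0$ with $d$ even (where $b_1=b_2=d/2$), to ensure the normal form construction, the parity argument, and the rank count all remain valid there.
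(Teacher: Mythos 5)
Your proposal is correct and follows essentially the same route as the paper: Lemma~\ref{lemma:TL} plus the rectangle criterion of Prop.~\ref{Proposition:Multiplicities} to reduce to $\alpha=0$ with at most two nonzero $\beta$'s, the two-cycle trace to solve for $\beta_1,\beta_2$, integrality of the rescaled parameters for necessity of $d^2-4r=k^2$, and the normal form $(-1_{b_1})\boxplus(-1_{b_2})$ for sufficiency. Your extra care with the parity check, the explicit rank count $b_1b_2=r$ on the mixed summand, and the degenerate cases $r=0$ and $k=0$ fills in details the paper leaves implicit, but does not change the argument.
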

\begin{proof}
	We first show part $ii)$. Let $R\in\R_0$ have Thoma parameters $(\alpha,\beta)$. According to the preceding lemma, the trivial representation of $S_3$, corresponding to the Young diagram $\ydiagram{3}$, does not occur in $\rho_R^{(3)}$ if and only if $R$ is of Temperley-Lieb type. We can thus conclude from Prop.~\ref{Proposition:Multiplicities} that (equivalence classes of) Temperley-Lieb R-matrices are in one to one correspondence with those $(\alpha,\beta)\in\ThomaYB$ that have at most $\beta_1,\beta_2$ as their only non-vanishing entries.
	
	We have $\beta_1+\beta_2=1$, and on a two-cycle, we get
	\begin{align}
		\chi_R(c_2)
		=
		-\beta_1^2-\beta_2^2
		=
		\frac{\Tr_{V\ot V}(R)}{d^2}
		=
		\frac{2r-d^2}{d^2}
		.
	\end{align}
	Solving the resulting quadratic equation proves \eqref{eq:ab-TL}.
	
	$i)$ A Yang-Baxter representation of $\T(2)$ of dimension $d$ and rank $r$ exists if and only if a Temperley-Lieb R-matrix with the same parameters exists. Since the rescaled Thoma parameters are integers, we know that $k\deq d(\beta_1-\beta_2)$ is an integer. In view of \eqref{eq:ab-TL}, $k=\sqrt{d^2-4r}$. This shows that \eqref{eq:quad} is necessary for the existence of a representation with dimension $d$ and rank $r$.
	
	Conversely, if \eqref{eq:quad} holds for some $k\in\Nl_0$, then \eqref{eq:ab-TL} defines a Temperley-Lieb R-matrix with dimension $d$ and rank $r$. In terms of diagrams, 
	\begin{align}
		R=\left(\emptyset,\ydiagram{7,2}\right), 
	\end{align}
	consisting of $d$ boxes distributed over (one or) two rows on the right, with the difference in row lengths equal to $k$.
	
	The last statement follows because the Thoma parameters \eqref{eq:ab-TL} depend only on $d$ and~$r$.
\end{proof}

We conclude this discussion of Temperley-Lieb \(R\)-matrices with the following remarks.
	\begin{enumerate}
		\item Yang-Baxter representations of the Temperley-Lieb algebra with general loop parameter $\delta$, i.e., representations in which the tensor structure $T_k=1\tp{(k-1)}\ot T\ot1\ot\ldots$ is required for the generators of $\TL(\delta)$, have recently been studied by Bytsko. He found various inequalities between $\delta$, the dimension $d$, and the rank $r={\rm Tr}_{V\ot V}(P)$ that are necessary for such representations to exist \cite{Bytsko:2015,Bytsko:2015_3}. 
		
		Our results give a necessary and sufficient condition on $d$ and $r$ for Yang-Baxter representations of $\TL(2)$ with these parameters to exist, and classify such representations up to equivalence. 
		
		\item In a more general setting of Hecke algebra representations, where the eigenvalues of $R$ are $-1$ and $\lambda$, with $\lambda$ not a non-trivial root of unity, Gurevich gave a classification of all Temperley-Lieb R-matrices for the special case of rank\footnote{Note that the term ``rank'' has a different meaning in \cite{Gurevich:1991}.} $r=1$~\cite{Gurevich:1991}. These rank $1$ solutions can be used to construct Temperley-Lieb R-matrices in any dimension, which are, however, typically not normal, 
that is, $P$ is not selfadjoint, even in the case of $\lambda=1$.

		The subclass of all those Temperley-Lieb R-matrices which are involutive and unitary, i.e. have eigenvalue $\lambda=1$ and selfadjoint spectral projection $P$, are precisely those which we consider here. Prop.~\ref{proposition:TL-Rs} implies in particular that in this subclass, the assumption $r=1$ is very restrictive and only realized by a single equivalence class given by dimension $d=2$ and $R\sim -F$.
                \item The Temperley-Lieb R-matrices in the above examples have non-vanishing $\beta$-parameters (instead of $\alpha$'s) because we imposed the Temperley-Lieb relation on the spectral projection onto eigenvalue $+1$. If we used the spectral projection onto eigenvalue $-1$ instead, $\alpha$ and $\beta$ would be exchanged.

	\end{enumerate}

\subsubsection*{Acknowledgements}

We thank Vaughan Jones for stimulating discussions and for bringing the references \cite{Gurevich:1986,Gurevich:1991} to our attention. We also thank
James Borger for an interesting exchange about \(\lambda\)-operations on
Thoma parameters. Furthermore, the hospitality of the Isaac Newton Institute for Mathematical Sciences in Cambridge during the program ``Operator algebras: subfactors and their applications'', supported by EPSRC Grant Number EP/K032208/1, is gratefully acknowledged.

\small

\bigskip

\noindent\footnotesize 
Gandalf Lechner: {\tt LechnerG@cardiff.ac.uk}\\[2mm]
Ulrich Pennig: {\tt PennigU@cardiff.ac.uk}\\[2mm]
Simon Wood: {\tt WoodSi@cardiff.ac.uk}

\end{document}